\documentclass[12pt]{amsart}
\usepackage{mathtools,amsthm,mathrsfs,txfonts,yfonts,titletoc}
\usepackage[all]{xy}
\usepackage[
	colorlinks,anchorcolor=blue,citecolor=blue,linkcolor=blue,urlcolor =blue,
	bookmarksdepth=2
]{hyperref}
\pagestyle{plain}

\newtheorem*{thm*}{Theorem}
\newtheorem*{prop*}{Proposition}
\newtheorem*{cor*}{Corollary}
\newtheorem{thm}{Theorem}[section]
\newtheorem{defn}[thm]{Definition}
\newtheorem{prop}[thm]{Proposition}
\newtheorem{cor}[thm]{Corollary}
\newtheorem{lemma}[thm]{Lemma}

\numberwithin{equation}{section}

\newcommand{\Z}{\varmathbb{Z}}
\newcommand{\C}{\varmathbb{C}}
\newcommand{\I}{\mathcal{I}}
\newcommand{\fO}{\mathcal{O}}
\newcommand{\lP}{\varmathbb{P}}
\newcommand{\F}{\varmathbb{F}}
\newcommand{\lG}{\varmathbb{G}}
\newcommand{\lV}{\varmathbb{V}}

\begin{document}
\title{\rm New cubic fourfolds with odd-degree \\ unirational parametrizations}
\author{Kuan-Wen Lai}
\address{Department of Mathematics, Brown University, Providence, RI}
\email{kwlai@math.brown.edu}

\begin{abstract}
We prove that the moduli space of cubic fourfolds $\mathcal{C}$ contains a divisor $\mathcal{C}_{42}$ whose general member has a unirational parametrization of degree 13. This result follows from a thorough study of the Hilbert scheme of rational scrolls and an explicit construction of examples. We also show that $\mathcal{C}_{42}$ is uniruled.
\end{abstract}
\maketitle

\DeclareRobustCommand{\gobblefour}[4]{}
\newcommand*{\SkipTocEntry}{\addtocontents{toc}{\gobblefour}}
\setcounter{tocdepth}{1}
\tableofcontents
\parskip = 5pt

\section*{Introduction}
Let $X$ be a smooth projective variety of dimension $n$ over $\C$. We say that $X$ has a \emph{degree $\varrho$ unirational parametrization} if there is a dominant rational map $\rho:\lP^n\dashrightarrow X$ with $\deg\rho=\varrho$. Such a parametrization implies that the smallest positive integer $N$ which allows the rational equivalence
\begin{equation}\label{decDiag}
	N\Delta_X\equiv N\{x\times X\}+Z\quad\mbox{in}\quad {\rm CH}^n(X\times X)
\end{equation}
would divide $\varrho$, where $x\in X$ and $Z$ is a cycle supported on $X\times Y$ for some divisor $Y\subset X$. The relation (\ref{decDiag}) for arbitrary integer $N$ is called a \emph{decomposition of the diagonal} of $X$, and it is called an \emph{integral} decomposition of the diagonal if $N=1$. (\cite{BS83}. See also \cite[Chap. 3]{Voi14}.)

This paper studies the unirationality of \emph{cubic fourfolds}, i.e. smooth cubic hypersurfaces in $\lP^5$ over $\C$. Let ${\rm Hdg}^4(X,\Z):=H^4(X,\Z)\cap H^2(\Omega_X^2)$ be the group of integral Hodge classes of degree 4 for a cubic fourfold $X$. In the coarse moduli space of cubic fourfolds $\mathcal{C}$, the Noether-Lefschetz locus
\[
	\left\{X\in\mathcal{C}\,:\,{\rm rk}\left({\rm Hdg}^4(X,\Z)\right)\geq 2\right\}
\]
is a countably infinite union of irreducible divisors $\mathcal{C}_d$ indexed by $d\geq8$ and $d\equiv 0,2$ (mod 6). Here $\mathcal{C}_d$ consists of the \emph{special cubic fourfolds} which admit a rank-2 saturated sublattice of discriminant $d$ in ${\rm Hdg}^4(X,\Z)$ \cite{Has00}. Because the integral Hodge conjecture is valid for cubic fourfolds \cite[Th. 1.4]{Voi13}, $X\in\mathcal{C}$ is special if and only if there is an algebraic surface $S\subset X$ not homologous to a complete intersection.

Voisin \cite[Th. 5.6]{Voi15} proves that a special cubic fourfold of discriminant $d\equiv 2$ (mod 4) admits an integral decomposition of the diagonal. Because every cubic fourfold has a unirational parametrization of degree 2 \cite[Example 18.19]{Har95}, it is natural to ask whether they have odd degree unirational parametrizations.

For a general $X\in\mathcal{C}_d$ with $d=14$, 18, 26, 30, and 38, the examples constructed by Nuer \cite{Nue15} combined with an algorithm by Hassett \cite[Prop. 38]{Has16} support the expectation. In this paper, we improve the list by solving the case $d=42$.
\begin{thm}\label{mainThm}
	A generic $X\in\mathcal{C}_{42}$ has a degree 13 unirational parametrization.
\end{thm}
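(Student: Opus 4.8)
The plan is to realize a general $X\in\mathcal{C}_{42}$ as a cubic fourfold containing an explicit rational scroll $S$, and to convert the geometry of $S$ into a dominant rational map $\rho:\lP^4\dashrightarrow X$ of degree $13$. The first step is purely lattice-theoretic: a saturated rank-$2$ sublattice $\langle h^2,[S]\rangle\subset{\rm Hdg}^4(X,\Z)$ has discriminant $d=3\langle S,S\rangle-(\deg S)^2$, and for a smooth surface inside a cubic fourfold the self-intersection is governed by the normal-bundle formula $\langle S,S\rangle=6\deg S+3\,(K_S\cdot H)+K_S^2-e(S)$. Imposing $d=42$ forces $3\mid\deg S$ and pins down the numerical invariants (degree, sectional genus, $K_S^2$, $e(S)$) that a rational scroll must carry; I would then verify that a scroll with these invariants does sit on a cubic whose Hodge lattice is the predicted discriminant-$42$ one.

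The second step is existence together with dominance onto the whole divisor. Because $X\in\mathcal{C}$ is special precisely when it contains an algebraic surface not homologous to a complete intersection \cite{Has00}, the cubics containing such an $S$ automatically lie in some $\mathcal{C}_d$; the real content is that they fill an entire divisor and that this divisor is $\mathcal{C}_{42}$ rather than a proper subvariety or a neighbouring Hassett divisor. For this I would study the Hilbert scheme $\mathcal{H}$ of the scrolls in $\lP^5$ — its dimension and its smoothness at a general point — together with the incidence variety $\{(S,X):S\subset X\}$, and run a dimension count on its two projections. The aim is to show that a general cubic containing $S$ has Picard rank exactly $2$ with the lattice fixed in Step~1, so that these cubics sweep out a dense subset of the $19$-dimensional divisor $\mathcal{C}_{42}\subset\mathcal{C}$.

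With $S\subset X$ in hand, the third step constructs the parametrization from an auxiliary rational variety attached to $S$: a family of rational curves (or a rational normal scroll) meeting $S$ in a controlled way and sweeping out $X$, which gives a dominant map from a rational fourfold and hence from $\lP^4$. The degree of $\rho$ is then computed by Hassett's algorithm \cite[Prop.~38]{Has16}, which reduces it to intersection numbers read off from $S$ and from the sweeping family; the expected output is $13$.

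The main obstacle is the interaction of Steps~2 and~3: proving simultaneously that the scroll construction \emph{dominates} $\mathcal{C}_{42}$ and that the induced map is dominant of degree \emph{exactly} $13$ rather than a multiple. This requires tight control of the Hilbert scheme of scrolls — showing it is reduced of the expected dimension, that the general scroll lies on a smooth cubic, and that the residual intersection of the cubic with the sweeping family is in general position — which is precisely why an explicit construction of examples is needed to certify both the genericity and the degree count.
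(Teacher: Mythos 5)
Your overall architecture agrees with the paper's: fix the numerical invariants of a surface realizing discriminant $42$, prove via a Hilbert-scheme/incidence-variety dimension count that such surfaces sweep out all of $\mathcal{C}_{42}$, and then invoke Hassett's Proposition~38 to get a degree-$13$ parametrization from $\mathrm{Sym}^2 S$. But there is a genuine gap at the very first step, and it propagates. You propose to pin down the invariants using the self-intersection formula for a \emph{smooth} surface, $\langle S,S\rangle=6\deg S+3(K_S\cdot H)+K_S^2-e(S)$. For a smooth rational scroll of degree $D$ this gives $\langle S,S\rangle=3D-2$, so the discriminant would be $3(3D-2)-D^2=-D^2+9D-6$, which is never $42$ (indeed it is at most $14$). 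No smooth scroll works: the surface the paper uses is a degree-$9$ scroll with exactly $8$ non-normal ordinary double points, obtained by projecting $S_{1,8}\subset\lP^{10}$ to $\lP^5$, and the value $\langle S,S\rangle_X=41$ comes from the \emph{double point formula}, not the smooth normal-bundle formula. Your proposal never confronts the singularities, yet they are what make $d=42$ attainable and what feed the degree computation $\varrho=\tfrac{9\cdot 7}{2}+2-\tfrac{41}{2}=13$ (Hassett's proposition explicitly requires isolated singularities with smooth normalization).

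Consequently the hard content of Step~2 is missing. Once the scroll is singular, the relevant parameter space is not the full component $\mathcal{H}_9$ (of dimension $59$) but the locus $\mathcal{H}_9^8$ of scrolls with $8$ isolated singularities, and one must prove its codimension is at most $8$ (the paper's Theorem on $\mathcal{H}_D^r$ having codimension at most $r(N-4)$), show that the general such scroll imposes the expected $50$ conditions on cubics so that $h^0(\lP^5,\I_S(3))=6$, exhibit one explicit example lying on a \emph{smooth} cubic, and bound the fiber dimension of the second projection by a first-order deformation computation ($S$ moves in a $2$-dimensional family inside $X$). These are exactly the quantitative inputs $51+5-2=54=\dim U_{42}$ that make the dominance argument close, and none of them follows from a generic "dimension count on the two projections" without the singular-scroll analysis and the computer-verified example. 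Your closing paragraph correctly senses that an explicit example is needed, but the reason is not to control a "residual intersection in general position"; it is to certify the existence of a $8$-nodal degree-$9$ scroll on a smooth cubic at all, which is the crux of the proof.
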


Recall that a variety $Y$ is \emph{uniruled} if there is a variety $Z$ and a dominant rational map $Z\times\lP^1\dashrightarrow Y$ which doesn't factor through the projection to $Z$. As a byproduct of the proof of Theorem \ref{mainThm}, we also prove that
\begin{thm}
	$\mathcal{C}_{42}$ is uniruled.
\end{thm}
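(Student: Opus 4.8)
The plan is to prove that $\mathcal{C}_{42}$ is uniruled by exhibiting a rational curve through a general point, using the geometry that already appears in the proof of Theorem \ref{mainThm}. The key observation is that the construction establishing the degree-$13$ unirational parametrization should produce, for a general $X \in \mathcal{C}_{42}$, a rational surface (a rational scroll) $S \subset X$ whose class realizes the discriminant-$42$ sublattice of $\mathrm{Hdg}^4(X,\Z)$. The moduli of such pairs $(X, S)$ is governed by a component of the relevant Hilbert scheme, and the plan is to show that fixing the abstract surface $S$ (or its Hilbert polynomial and numerical type) while varying the cubic $X$ containing it sweeps out $\mathcal{C}_{42}$ in a way that carries an obvious ruling.

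\smallskip

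The key steps, in order, are as follows. First, I would set up the incidence variety $\mathcal{M} = \{(X,S) : S \subset X\}$ where $S$ ranges over the component of the Hilbert scheme of scrolls studied earlier in the paper and $X$ ranges over cubic fourfolds containing $S$, together with its two projections $p : \mathcal{M} \to \mathcal{C}_{42}$ and $q : \mathcal{M} \to \mathcal{H}$, the Hilbert scheme component. Second, I would argue that $p$ is dominant (indeed generically finite onto $\mathcal{C}_{42}$), which is exactly the content needed to make the construction in Theorem \ref{mainThm} apply to a generic member. Third, and this is the crux, I would show that the fibers of $q$ — i.e. the families of cubic fourfolds containing a fixed scroll $S$ — are (rationally connected by, or covered by) rational curves: given a fixed $S\subset\lP^5$ with ideal $I_S$, the cubics containing $S$ form the linear system $\lP\bigl(H^0(\lP^5,\I_S(3))\bigr)$, a projective space, so each fiber of $q$ is an open subset of a projective space and is therefore rational and in particular ruled by lines. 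Since $\dim \mathcal{H} < \dim \mathcal{M}$, these fibers are positive-dimensional, and pushing the lines in a fiber forward under the dominant map $p$ produces a covering family of rational curves on $\mathcal{C}_{42}$.

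\smallskip

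Concretely, the plan is to take a general point $[X_0] \in \mathcal{C}_{42}$, lift it to $(X_0, S_0) \in \mathcal{M}$ via a choice of scroll, and then move inside the linear system of cubics containing $S_0$: a general line in $\lP\bigl(H^0(\I_{S_0}(3))\bigr)$ through $[X_0]$ gives a pencil of cubics all containing $S_0$, hence a rational curve in $\mathcal{M}$ through $(X_0,S_0)$, whose image under $p$ is a rational curve in $\mathcal{C}_{42}$ through $[X_0]$. To finish I must check two things: that $p$ does not contract this curve to a point (so that the image is genuinely a rational \emph{curve}, not a point), and that these curves do not all factor through a lower-dimensional base — equivalently, that the induced map from (base)$\times\lP^1$ to $\mathcal{C}_{42}$ is dominant and does not factor through the projection, which is the definition of uniruledness. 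Non-contraction follows once I verify that varying $X$ in the pencil genuinely changes the period point, i.e. that the fibers of $q$ are not contained in fibers of $p$; a dimension count comparing $\dim\lP(H^0(\I_{S_0}(3)))$ against the finiteness of $p$ should settle this.

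\smallskip

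The main obstacle I expect is the non-contraction / dimension bookkeeping at the last step: I need the linear system of cubics through a fixed scroll $S_0$ to have dimension strictly larger than the dimension of the generic fiber of $p$ (equivalently, that the surfaces $S$ through a general $X$ form a positive-dimensional family is \emph{not} what is wanted — rather I need the cubics through a fixed $S$ to move nontrivially in moduli). Controlling $h^0(\I_{S_0}(3))$ requires knowing the postulation of the scroll $S_0$ in $\lP^5$, which should be available from the Hilbert-scheme analysis carried out earlier for Theorem \ref{mainThm}; the delicate point is ensuring that the generic such pencil is \emph{not} isotrivial in period, so that $p$ is non-constant along it. Once the cohomological dimension count is pinned down and genericity of $S_0$ (smoothness of $\mathcal{H}$ at $[S_0]$, expected dimension of $\mathcal{M}$) is invoked, the ruling is immediate and uniruledness follows.
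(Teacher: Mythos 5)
Your overall setup is the same as the paper's: the incidence variety $\mathcal{Z}=\{(\overline{S},X)\}$, the observation that the fibers of the projection to the Hilbert scheme of scrolls are (open subsets of) the linear systems $\lP H^0(\lP^5,\I_S(3))\cong\lP^5$, dominance of the other projection onto $\mathcal{C}_{42}$ (which is part (1) of the paper's Theorem \ref{dom}), and the reduction of uniruledness to showing that the induced map $\mathcal{Z}\to\mathcal{C}_{42}$ does not contract these $\lP^5$-fibers. The gap is precisely at that last, crucial step. You propose to settle non-contraction by ``a dimension count comparing $\dim\lP(H^0(\I_{S_0}(3)))$ against the finiteness of $p$,'' resting on the claim that $p:\mathcal{Z}\to\mathcal{C}_{42}$ is generically finite. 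That claim is false: $\mathcal{Z}$ has dimension $56$ near the relevant point while $\dim\mathcal{C}_{42}=19$, so the fibers of $p$ have dimension $37$ (the $35$-dimensional $\lP{\rm GL}(6)$-orbit directions plus the $2$-dimensional family of scrolls inside a fixed cubic from Proposition \ref{SinX}(\ref{SinX3})). Since $5<37$, no dimension count can exclude the possibility that an entire $\lP^5$-fiber of $q$ sits inside a single fiber of $p$, i.e.\ that all smooth cubics through a fixed $S_0$ are projectively equivalent. This is exactly the scenario that must be ruled out, and dimensions alone do not do it.

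The paper closes this gap with a specific geometric input that your proposal is missing: Proposition \ref{SinX}(\ref{SinX4}), verified by explicit computation, states that the scroll $S$ is contained in a \emph{singular} cubic $Y$ in addition to the smooth cubic $X$. If the composition $\mathcal{Z}^\circ\to\mathcal{C}_{42}$ factored through the bundle projection, all cubics in $\lP H^0(\lP^5,\I_S(3))$ would lie in one $\lP{\rm GL}(6)$-orbit, forcing the smooth $X$ and the singular $Y$ to be isomorphic, which is absurd. So to complete your argument you need to replace the finiteness/dimension bookkeeping by some concrete certificate that the pencil of cubics through $S_0$ moves in moduli --- the paper's certificate being the coexistence of a smooth and a singular member in that linear system. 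Without such an input the ``delicate point'' you flag (that the pencil is not isotrivial) remains unproved.
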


\subsection*{Strategy of Proof}
When $d=2(n^2+n+1)$ with $n\geq2$ and $X\in\mathcal{C}_d$ is general, the Fano variety of lines $F_1(X)$ is isomorphic to the Hilbert scheme of two points $\Sigma^{[2]}$, where $\Sigma$ is a K3 surface polarized by a primitive ample line bundle of degree $d$. \cite[Th. 6.1.4]{Has00}

The isomorphism $F_1(X)\cong\Sigma^{[2]}$ implies that $X$ contains a family of two-dimensional rational scrolls parametrized by $\Sigma$. Indeed, the divisor $\Delta\subset\Sigma^{[2]}$ parametrizing the non-reduced subschemes can be naturally identified as the projectivization of the tangent bundle of $\Sigma$. Each fiber of this $\lP^1$-bundle induces a smooth rational curve in $F_1(X)$ through the isomorphism, hence corresponds to a rational scroll in $X$.

Let $S\subset X$ be one of such scrolls. Since $S$ is rational, its symmetric square $W={\rm Sym}^2S$ is also rational. A generic element $s_1+s_2\in W$ spans a line $l(s_1,s_2)$ not contained in $X$, so there is a rational map
\[\begin{array}{cccc}
	\rho:&W&\dashrightarrow&X\\
		&s_1+s_2&\mapsto&x
\end{array}\]
where $l(s_1,s_2)\cap X=\{s_1,s_2,x\}$. By \cite[Prop. 38]{Has16}, this map becomes a unirational parametrization if $S$ has isolated singularities. Moreover, its degree is odd as long as $4\nmid d$.

Discriminant $d=42$ corresponds to the case $n=4$ above. Note that $4\nmid d=42$. Thus a generic $X\in\mathcal{C}_{42}$ admits an odd degree unirational parametrization once we prove that
\begin{thm}\label{mainStep}
	A generic $X\in\mathcal{C}_{42}$ contains a degree-9 rational scroll $S$ which has 8 double points and is smooth otherwise.
\end{thm}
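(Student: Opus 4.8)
The plan is to realize the scroll from the geometry already set up, read off its degree from intersection theory on $\Sigma^{[2]}$, and then obtain the number of nodes from the double-point formula; the real work is to certify that the singularities are exactly eight ordinary double points. First I would fix the scroll: for $n=4$ we have $d=42$ and $F_1(X)\cong\Sigma^{[2]}$ with $\Sigma$ a K3 surface of degree $42$ \cite[Th.~6.1.4]{Has00}, and I take $C$ to be a fiber of $\Delta=\lP(T_\Sigma)\to\Sigma$, a smooth rational curve in $F_1(X)$ whose lines sweep out $S\subset X$. The degree of $S$ in $\lP^5$ equals the Plücker degree $g\cdot C$ of $C$ in $\lG(1,5)$, because the rulings meeting a general $\lP^3$ form a Schubert divisor in the class $g$. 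Computing $g\cdot C$ on $\Sigma^{[2]}$ with the Plücker polarization written as $g=2H_2-9\delta$ (the class of Beauville--Bogomolov square $6$), together with the standard fiber intersection numbers $H_2\cdot C=0$ and $\delta\cdot C=-1$, gives $g\cdot C=9$. As $C$ is smooth rational, the normalization $\nu\colon\tilde S\to S$ is a Hirzebruch surface $\mathbf{F}_e$ mapping birationally onto $S$.

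Next I would compute the node count numerically. Since $X\in\mathcal{C}_{42}$, the saturated rank-$2$ sublattice $\langle h^2,[S]\rangle\subset\mathrm{Hdg}^4(X,\Z)$ has discriminant $42$; with $h^4=3$ and $h^2\cdot[S]=\deg S=9$, the identity $3\,[S]^2-81=42$ forces the self-intersection $[S]^2=41$ in $X$. I then apply the double-point formula to $\nu\colon\tilde S\to X$, for which the virtual double-point class $\mathcal{D}=\nu^*\nu_*[\tilde S]-c_2(\nu^*T_X-T_{\tilde S})$ is a $0$-cycle with $\deg\nu^*\nu_*[\tilde S]=[S]^2=41$. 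The Chern correction depends only on the numerical type of the scroll: setting $H=\nu^*h$ one has $H^2=9$, and since the hyperplane sections of a rational scroll are rational, adjunction gives $K_{\tilde S}\cdot H=-2-H^2=-11$, while $K_{\tilde S}^2=8$ and $c_2(\tilde S)=4$ for every $\mathbf{F}_e$; together with $c(T_X)=(1+h)^6/(1+3h)$ this yields $\deg c_2(\nu^*T_X-T_{\tilde S})=25$. Hence $\deg\mathcal{D}=41-25=16$, which is twice the number of nodes as soon as $S$ has only ordinary double points, giving exactly $8$.

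The main obstacle is that this count is only virtual: I must show that for generic $X$ the scroll genuinely acquires eight ordinary nodes and is smooth elsewhere, since a pinch point, a double curve, or a point of higher multiplicity would contribute equally to $\deg\mathcal{D}$. My plan is to anchor the family with an explicit example---one degree-$42$ K3 surface $\Sigma_0$, its cubic $X_0$, and a fiber scroll $S_0$ written in coordinates---and to verify directly (by hand or in Macaulay2) that $\mathrm{Sing}(S_0)$ consists of precisely eight nodes, with $\nu$ birational so that $S_0$ is a genuine surface rather than a cone. Having exactly eight nodes and being smooth otherwise is an open condition in the flat family of scrolls produced by the construction as $(\Sigma,C)$ varies, and since $\mathcal{C}_{42}$ is irreducible with generic member realized through $F_1(X)\cong\Sigma^{[2]}$, semicontinuity propagates the nodal behaviour of $S_0$ to the scroll over a generic $X\in\mathcal{C}_{42}$. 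Everything therefore rests on producing one scroll with exactly the expected singularities and on checking that the relevant Hilbert scheme is smooth of the expected dimension at this point, so that the nodal structure survives deformation; a secondary check is that $\langle h^2,[S_0]\rangle$ is saturated of discriminant exactly $42$, placing $X_0$ in $\mathcal{C}_{42}$ rather than in some other $\mathcal{C}_d$.
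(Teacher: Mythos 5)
Your numerical bookkeeping is correct and matches the paper ($\deg S=9$, $\langle S,S\rangle_X=41$, discriminant $42$, and $8$ double points from the double-point formula \emph{once the singularities are known to be isolated}), and you correctly isolate the real difficulty. But the step you propose to resolve it --- anchoring the argument with ``one degree-$42$ K3 surface $\Sigma_0$, its cubic $X_0$, and a fiber scroll $S_0$ written in coordinates'' --- is not feasible, and it is precisely what the paper is organized to avoid. The isomorphism $F_1(X)\cong\Sigma^{[2]}$ of \cite[Th. 6.1.4]{Has00} is established transcendentally through the Hodge structures, and there is no procedure for writing it, or the fiber scrolls it produces, in coordinates for a specific $X_0$; so your semicontinuity argument over the irreducible family of pairs $(\Sigma,C)$ has no example to propagate. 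Note also that exhibiting \emph{some} $8$-nodal degree-$9$ scroll inside \emph{some} smooth $X_0\in\mathcal{C}_{42}$ (which is what one can actually do by computer) does not certify that this scroll is a $\Delta$-fiber scroll of the $\Sigma^{[2]}$-family for $X_0$, so it cannot serve as the anchor of your openness argument either.

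The paper bridges from ``one example'' to ``generic $X\in\mathcal{C}_{42}$'' by a different mechanism: it builds the scroll $S$ explicitly as a linear projection of $S_{1,8}\subset\lP^{10}$ (Proposition \ref{SinX}) and then runs a dimension count on the incidence variety $\mathcal{Z}\subset\mathcal{H}_9^8\times U$, showing that $p_2$ dominates $U_{42}$. This requires three inputs that are absent from your proposal: the lower bound $\dim\mathcal{H}_9^8\geq 59-8=51$ (Theorem \ref{singCodim} together with Proposition \ref{SSHilb}, a substantial portion of the paper); the computation $h^0(\lP^5,\I_S(3))=6$, which makes $\mathcal{Z}$ a $\lP^5$-bundle near the example so that $\dim\mathcal{Z}\geq 56$; and the verification that $S$ deforms in $X$ to first order in a family of dimension at most $2$, which bounds the fibers of $p_2$ and forces $\dim p_2(\mathcal{Z})\geq 54=\dim U_{42}$. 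Your closing remark about checking that the Hilbert scheme is smooth of expected dimension at the example gestures at part of this, but without the codimension estimate for $\mathcal{H}_9^8$ and the fiber-dimension bound for $p_2$ there is no way to conclude dominance over $\mathcal{C}_{42}$, which is the actual content of the theorem.
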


Here a \emph{double point} means a \emph{non-normal ordinary double point}. It's a point where the surface has two branches that meet transversally.

The idea in proving Theorem \ref{mainStep} is as follows:

Degree-9 scrolls in $\lP^5$ form a component $\mathcal{H}_9$ in the associated Hilbert scheme. Let $\mathcal{H}_9^8\subset\mathcal{H}_9$ parametrize scrolls with 8 isolated singularities. By definition (See Section \ref{sect:HilbSS}) an element $\overline{S}\in\mathcal{H}_9^8$ is non-reduced. We use $S$ to denote its underlying variety.

Let $U_{42}\subset|\fO_{\lP^5}(3)|$ be the locus of special cubic fourfolds with discriminant 42. Consider the incidence variety
\[
	\mathcal{Z} = \left\{(\overline{S},X)\in\mathcal{H}_9^8\times U_{42}:S\subset X\right\}.
\]
Then there is a diagram
\[\xymatrix{
	&\mathcal{Z}\ar_{p_1}[ld]\ar^{p_2}[rd]&&\\
	\mathcal{H}_9^8&&U_{42}\ar[r]&\mathcal{C}_{42}
}\]

Theorem \ref{mainStep} is proved by showing that $p_2$ is dominant. Two main ingredients in the proof are:
\begin{itemize}
\item Constructing an explicit example.
\item Estimating the dimension of the Hilbert scheme parametrizing singular scrolls.
\end{itemize}

Section \ref{sect:pSS} provides an introduction of rational scrolls and the basic properties required in the proof. We construct an example in Section \ref{sect:constr} and then prove the main results in Section \ref{sect:C42}. The general description about the Hilbert schemes $\mathcal{H}_9^8\subset\mathcal{H}_9$ and the estimate of the dimensions are left to Section \ref{sect:HilbSS}.

Throughout the paper we will frequently deal with the rational map
\[
	\Lambda_Q: \lP^{D+1}\dashrightarrow\lP^N
\]
defined as the projection from some $(D-N)$-plane $Q$. Here $D$ and $N$ are positive integers such that $D+1\geq N\geq 3$. We will assume $D\geq N\geq 5$ when we are studying singular scrolls.

\medskip
\noindent {\bf Acknowledgments:}
I am grateful to my advisor, Brendan Hassett, for helpful suggestions and his constant encouragement. I also appreciate the helpful comments from Nicolas Addington. I'd like to give my special thanks to the referee who points out a mistake in an earlier version of the paper and provides suggestions on how to fix it. I am grateful for the support of the National Science Foundation through DMS-1551514.

\section{Preliminary: rational scrolls}\label{sect:pSS}
We provide a brief review of rational scrolls and introduce necessary terminologies and lemmas in this section.
\subsection{Hirzebruch surfaces}
Let $m$ be a nonnegative integer, and let $\mathscr{E}$ be a rank two locally free sheaf on $\lP^1$ isomorphic to $\fO_{\lP^1}\oplus\fO_{\lP^1}(m)$. The Hirzebruch surface $\F_m$ is defined to be the associated projective space bundle $\lP(\mathscr{E})$.

Let $f$ be the divisor class of a fiber, and let $g$ be the divisor class of a section, i.e. the divisor class associated with Serre's twisting sheaf $\fO_{\lP(\mathscr{E})}(1)$. The Picard group of $\F_m$ is freely generated by $f$ and $g$, and the intersection pairing is given by
\[\begin{array}{c|cc}
		&f	&g\\
	\hline
	f 	&0	&1\\
	g	&1	&m.
\end{array}\]
The canonical divisor is $K_{\F_m}=-2g+(m-2)f$.

Let $a$ and $b$ be two integers, and let $h=ag+bf$ be a divisor on $\F_m$. The ampleness and the very ampleness for $h$ are equivalent on $\F_m$, and it happens if and only if $a>0$ and $b>0$. \cite[Chapter V, \S2.18]{Har77}

\begin{lemma}\label{hHir}
Suppose the divisor $ag+bf$ is ample. We have
\[\begin{array}{ccl}
	h^0\left(\F_m\,,\,ag+bf\right)&=&(a+1)\left(\frac{1}{2}am+b+1\right),\\
	h^i\left(\F_m\,,\,ag+bf\right)&=&0\quad\mbox{for all}\enspace i>0.
\end{array}\]
\end{lemma}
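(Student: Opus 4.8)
The plan is to use the projective bundle structure $\pi:\F_m\to\lP^1$ to reduce the computation on the surface to line bundle cohomology on $\lP^1$, which handles the vanishing and the dimension count simultaneously. The divisor $ag+bf$ corresponds to the line bundle $\fO_{\lP(\mathscr{E})}(a)\otimes\pi^*\fO_{\lP^1}(b)$, so by the projection formula together with the standard pushforward formula for a $\lP^1$-bundle one has $\pi_*\fO_{\F_m}(ag+bf)\cong(\operatorname{Sym}^a\mathscr{E})\otimes\fO_{\lP^1}(b)$. Since $\mathscr{E}\cong\fO_{\lP^1}\oplus\fO_{\lP^1}(m)$ gives $\operatorname{Sym}^a\mathscr{E}\cong\bigoplus_{j=0}^a\fO_{\lP^1}(jm)$, this yields
\[
	\pi_*\fO_{\F_m}(ag+bf)\cong\bigoplus_{j=0}^{a}\fO_{\lP^1}(jm+b).
\]
Because the fibers of $\pi$ are $\lP^1$ and $a\geq0$, the higher direct images $R^q\pi_*\fO_{\F_m}(ag+bf)$ vanish for $q>0$, so the Leray spectral sequence degenerates and $H^i(\F_m,ag+bf)\cong H^i\bigl(\lP^1,\bigoplus_{j}\fO_{\lP^1}(jm+b)\bigr)$ for every $i$.

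Next I would invoke the ampleness hypothesis, which by the cited criterion of \cite[Chapter V, \S2.18]{Har77} means precisely $a>0$ and $b>0$. Together with $m\geq0$ this forces every summand $\fO_{\lP^1}(jm+b)$ to have nonnegative degree, so $h^1(\lP^1,\fO(jm+b))=0$; and $H^2$ vanishes automatically since $\lP^1$ is a curve. This gives $h^i(\F_m,ag+bf)=0$ for all $i>0$. For the remaining count,
\[
	h^0(\F_m,ag+bf)=\sum_{j=0}^{a}h^0\bigl(\lP^1,\fO(jm+b)\bigr)=\sum_{j=0}^{a}(jm+b+1),
\]
and summing the arithmetic progression yields $(a+1)(b+1)+m\tfrac{a(a+1)}{2}=(a+1)\bigl(\tfrac12am+b+1\bigr)$, as claimed. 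As an independent check, once higher cohomology is known to vanish one has $h^0=\chi(ag+bf)$, and Riemann--Roch on the surface using $\chi(\fO_{\F_m})=1$, the intersection table above, and $K_{\F_m}=-2g+(m-2)f$ to expand $1+\tfrac12(ag+bf)\cdot\bigl((a+2)g+(b-m+2)f\bigr)$ reproduces the same polynomial.

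The computation is essentially routine, so I do not expect a serious obstacle; the points needing care are bookkeeping. I would fix the convention for $\lP(\mathscr{E})$ at the outset so that $\pi_*\fO_{\lP(\mathscr{E})}(1)=\mathscr{E}$ is compatible with the normalization $g^2=m$ recorded in the intersection table (this is the Grothendieck relation $g^2=c_1(\mathscr{E})\cdot f=m$), and I would verify that the ampleness inequalities $a,b>0$, together with $m\geq0$, are exactly what keeps every degree $jm+b$ nonnegative, which is the single input driving both the vanishing and the clean closed form. I would also note why a direct appeal to Kodaira vanishing is insufficient: writing $ag+bf=K_{\F_m}+\bigl((a+2)g+(b-m+2)f\bigr)$ requires $b-m+2>0$ for the twist to be ample, which need not hold for all admissible $b$, so the pushforward route is the robust one.
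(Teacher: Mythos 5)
Your argument is correct and is exactly the second of the two routes the paper itself indicates for this lemma (``applying the projection formula to the bundle map $\pi:\F_m\rightarrow\lP^1$''), carried out in full: the pushforward $\bigoplus_{j=0}^{a}\fO_{\lP^1}(jm+b)$, the vanishing from $a,b>0$ and $m\geq0$, and the arithmetic-progression sum all check out, as does your Riemann--Roch cross-check. The paper merely cites references for this computation, so there is nothing further to compare.
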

These formulas appear in several places in the literature with slightly different details depending on the contexts, for example \cite[Prop. 2.3]{Laf02}, \cite[p.543]{BBF04}, and \cite[Lemma 2.6]{Cos06}. It can be proved by induction on the integers $a$ and $b$ or by applying the projection formula to the bundle map $\pi:\F_m\rightarrow\lP^1$.

\subsection{Deformations of Hirzebruch surfaces}
$\F_m$ admits a deformation to $\F_{m-2k}$ for all $m>2k\geq0$. More precisely, there exitsts a holomorphic family $\tau:\mathcal{F}\rightarrow\C$ such that $\mathcal{F}_0\cong\F_m$ and $\mathcal{F}_t\cong\F_{m-2k}$ for $t\neq0$. The family can be written down explicitly by the equation
\begin{equation}\label{defHir}
	\mathcal{F} = \left\{{x_0}^my_1-{x_1}^my_2+t{x_0}^{m-k}{x_1}^ky_0=0\right\}
	\subset\lP^1\times\lP^2\times\C,
\end{equation}
where $\left(\left[x_0,x_1\right],\left[y_0,y_1,y_2\right],t\right)$ is the coordinate of $\lP^1\times\lP^2\times\C$. \cite[p.205]{BPV84}

Generally, $\F_m$ admits an analytic versal deformation with a base manifold of dimension $h^1\left(\F_m,T_{\F_m}\right)$ by the following lemma.
\begin{lemma}\label{absDef}\cite[Lemma 1. and Theorem 4.]{Sei92}
	There is a natural isomorphism $H^1\left(\F_m,T_{\F_m}\right)\cong H^1\left(\lP^1,\fO_{\lP^1}(-m)\right)$. We also have $H^2\left(\F_m,T_{\F_m}\right)=0$.
\end{lemma}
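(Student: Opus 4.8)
The plan is to compute everything by pushing forward along the ruling $\pi:\F_m\rightarrow\lP^1$ via the relative tangent sequence
\[
	0\longrightarrow T_{\F_m/\lP^1}\longrightarrow T_{\F_m}\longrightarrow \pi^*T_{\lP^1}\longrightarrow 0.
\]
First I would identify the two outer bundles. Using $K_{\F_m}=-2g+(m-2)f$ and $\pi^*K_{\lP^1}=-2f$, the relative canonical class is $-2g+mf$, so $T_{\F_m/\lP^1}=\fO_{\F_m}(2g-mf)$. Since $\fO_{\F_m}(g)=\fO_{\lP(\mathscr{E})}(1)$ with $\pi_*\fO_{\lP(\mathscr{E})}(1)=\mathscr{E}$ and $\fO_{\F_m}(-mf)=\pi^*\det\mathscr{E}^\vee$, the projection formula gives
\[
	\pi_*T_{\F_m/\lP^1}=\mathrm{Sym}^2\mathscr{E}\otimes\det\mathscr{E}^\vee=\fO_{\lP^1}(-m)\oplus\fO_{\lP^1}\oplus\fO_{\lP^1}(m),
\]
and likewise $\pi_*\pi^*T_{\lP^1}=T_{\lP^1}=\fO_{\lP^1}(2)$. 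Both line bundles restrict to degree $\geq-1$ on each fiber, so their $R^1\pi_*$ vanish.

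From these vanishings, applying $\pi_*$ to the relative tangent sequence forces $R^1\pi_*T_{\F_m}=0$, and $R^{\geq 2}\pi_*=0$ automatically because the fibers are curves. The Leray spectral sequence therefore degenerates and gives $H^i(\F_m,T_{\F_m})\cong H^i(\lP^1,\pi_*T_{\F_m})$ for all $i$. As $\lP^1$ is a curve, the right-hand side vanishes for $i=2$, which already proves $H^2(\F_m,T_{\F_m})=0$.

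For the isomorphism I would pass to the long exact cohomology sequence of the relative tangent sequence. The same Leray reduction gives $H^1(\F_m,\pi^*T_{\lP^1})\cong H^1(\lP^1,\fO_{\lP^1}(2))=0$, so the sequence collapses to
\[
	H^0(\F_m,T_{\F_m})\xrightarrow{\ d\pi\ }H^0(\lP^1,T_{\lP^1})\xrightarrow{\ \delta\ }H^1(\F_m,T_{\F_m/\lP^1})\longrightarrow H^1(\F_m,T_{\F_m})\longrightarrow 0.
\]
A direct pushforward computation identifies $H^1(\F_m,T_{\F_m/\lP^1})=H^1\bigl(\lP^1,\fO_{\lP^1}(-m)\oplus\fO_{\lP^1}\oplus\fO_{\lP^1}(m)\bigr)=H^1(\lP^1,\fO_{\lP^1}(-m))$, the nonnegative summands being acyclic. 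Thus it remains to show that the connecting map $\delta$ vanishes, equivalently that $d\pi$ is surjective; the desired isomorphism is then the canonical composite $H^1(\F_m,T_{\F_m})\cong H^1(\F_m,T_{\F_m/\lP^1})\cong H^1(\lP^1,\fO_{\lP^1}(-m))$, which is natural because it is assembled entirely from the relative tangent sequence and the standard pushforward identifications.

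The vanishing of $\delta$ is the step I expect to require care. Here $d\pi$ is the differential at the identity of the homomorphism $\mathrm{Aut}(\F_m)\rightarrow\mathrm{Aut}(\lP^1)=\mathrm{PGL}_2$ induced by the ruling, and I would prove its surjectivity by exhibiting a genuine lift of automorphisms: $\mathrm{SL}_2$ acts on $\lP^1$ and on $\mathscr{E}=\fO_{\lP^1}\oplus\fO_{\lP^1}(m)$ compatibly, since each summand is $\mathrm{SL}_2$-equivariant, hence acts on $\lP(\mathscr{E})=\F_m$ over the base. Differentiating this action splits $d\pi$, so $d\pi$ is surjective and $\delta=0$. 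I would emphasize that one cannot shortcut this through Schur's lemma applied to $\delta$ directly: the source $H^0(\lP^1,T_{\lP^1})$ is the $3$-dimensional irreducible $\mathrm{SL}_2$-module and the target $H^1(\lP^1,\fO_{\lP^1}(-m))\cong H^0(\lP^1,\fO_{\lP^1}(m-2))^\vee$ is the $(m-1)$-dimensional one, and these coincide exactly when $m=4$, so only the honest lift of automorphisms settles every case uniformly.
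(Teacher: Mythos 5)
Your argument is correct and complete, but note that the paper itself does not prove this lemma at all: it simply cites Seiler's Lemma 1 and Theorem 4, so you are supplying a proof where the paper supplies a reference. Your route is the standard one and all the computations check out: the relative canonical class gives $T_{\F_m/\lP^1}=\fO_{\F_m}(2g-mf)$, the pushforward $\mathrm{Sym}^2\mathscr{E}\otimes\det\mathscr{E}^\vee=\fO_{\lP^1}(-m)\oplus\fO_{\lP^1}\oplus\fO_{\lP^1}(m)$ is right, the fiber-degree bound kills the $R^1\pi_*$'s, and Leray then disposes of $H^2$ and reduces $H^1$ to the cokernel of the connecting map $\delta$. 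You correctly identify the only genuinely delicate point, the surjectivity of $d\pi$, and your resolution via the canonical $\mathrm{SL}_2$-linearization of $\fO_{\lP^1}\oplus\fO_{\lP^1}(m)$ is the clean way to do it; your remark that Schur's lemma alone fails precisely at $m=4$ (where $H^0(T_{\lP^1})$ and $H^1(\fO_{\lP^1}(-m))$ are both the $3$-dimensional irreducible) shows you understand why the honest lift is needed. One small point worth making explicit, since the paper immediately uses the lemma to identify deformations of $\F_m$ with deformations of $\mathscr{E}$: your $\pi_*T_{\F_m/\lP^1}=\mathrm{Sym}^2\mathscr{E}\otimes\det\mathscr{E}^\vee$ is exactly the complement of the trivial summand in $\mathscr{E}nd(\mathscr{E})$, so your isomorphism $H^1(\F_m,T_{\F_m})\cong H^1(\lP^1,\fO_{\lP^1}(-m))$ really is the same map as the one through $\mathrm{Ext}^1_{\lP^1}(\mathscr{E},\mathscr{E})$ in display (\ref{natIsom}); stating this would tie your proof to the way the lemma is actually deployed in the paper.
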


Let $\mathcal{E}=\fO_{\lP^1}\oplus\fO_{\lP^1}(m)$ be the underlying locally free sheaf of $\F_m$. It is straightforward to compute that ${\rm Ext^1}_{\lP^1}\left(\mathcal{E},\mathcal{E}\right)\cong H^1\left(\lP^1,\fO_{\lP^1}(-m)\right)$, so there is a natural isomorphism
\begin{equation}\label{natIsom}
	H^1\left(\F_m,T_{\F_m}\right)\cong{\rm Ext^1_{\lP^1}}\left(\mathcal{E},\mathcal{E}\right),
\end{equation}
by Lemma \ref{absDef}. The elements of the group ${\rm Ext^1}_{\lP^1}\left(\mathcal{E},\mathcal{E}\right)$ are in one-to-one correspondence with the deformations of $\mathcal{E}$ over the dual numbers $D_t\cong\frac{\C[t]}{(t^2)}$. \cite[Th. 2.7]{Har10} Thus (\ref{natIsom}) says that the infinitesimal deformation of $\F_m$ can be identified with the infinitesimal deformation of its underlying locally free sheaf.

Every element in
$
	{\rm Ext^1}_{\lP^1}\left(\mathcal{E},\mathcal{E}\right)\cong
	{\rm Ext^1}_{\lP^1}\left(\fO_{\lP^1}(m),\fO_{\lP^1}\right)
$
is represented by a short exact sequence
\[
	0\rightarrow\fO_{\lP^1}\rightarrow\fO_{\lP^1}(k)\oplus\fO_{\lP^1}(m-k)
	\rightarrow\fO_{\lP^1}(m)\rightarrow0
\]
for some $k$ satisfying $m>2k\geq0$. By tracking the construction of the correspondence in \cite[Th. 2.7]{Har10}, the above sequence corresponds to a coherent sheaf $\mathscr{E}$ on $\lP^1\times D_t$, flat over $D_t$, such that $\mathscr{E}_0\cong\mathcal{E}$ and $\mathscr{E}_t\cong\fO_{\lP^1}(k)\oplus\fO_{\lP^1}(m-k)$ for $t\neq0$. So it induces a flat family $\mathcal{F}$ of Hirzebruch surfaces over $D_t$ such that $\mathcal{F}_0\cong\F_m$ and $\mathcal{F}_t\cong\F_{m-2k}$ for $t\neq0$.

\subsection{Rational normal scrolls}
Let $u$ and $v$ be positive integers with $u\leq v$ and let $N=u+v+1$. Let $P_1$ and $P_2$ be complementary linear subspaces of dimensions $u$ and $v$ in $\lP^N$. Choose rational normal curves $C_1\subset P_1$, $C_2\subset P_2$, and an isomorphism $\varphi:C_1\rightarrow C_2$. Then the union of the lines $\bigcup_{p\in C_1}\overline{p\,\varphi(p)}$ forms a smooth surface $S_{u,v}$ called a \emph{rational normal scroll of type} $(u,v)$. The line $\overline{p\,\varphi(p)}$ is called a \emph{ruling}. When $u<v$, we call the curve $C_1\subset S_{u,v}$ the \emph{directrix} of $S_{u,v}$.

A rational normal scroll of type $(u,v)$ is uniquely determined up to projective isomorphism. In particular, each $S_{u,v}$ is projectively equivalent to the one given by the parametric equation
\begin{equation}\label{stdRNS}
\begin{array}{ccc}
	\C^2	&\longrightarrow	&\lP^N\\
	(s,t)		&\longmapsto		&(1,s,...,s^u,t,st,...,s^vt).
\end{array}
\end{equation}
One can check by this expression that a hyperplane section of $S_{u,v}$ which doesn't contain a ruling is a rational normal curve of degree $u+v$. It easily follows that $S_{u,v}$ has degree $D=u+v$.

The rulings of $S_{u,v}$ form a rational curve in $\lG(1,N)$ the Grassmannian of lines in $\lP^N$. By using (\ref{stdRNS}), we can parametrize this curve as
\begin{equation}\label{paraRul}
\begin{array}{ccc}
	\C	&\longrightarrow	&\lG(1,N)\\
	s	&\longmapsto		&\left(\begin{array}{cccccccc}
		1&s&...&s^u&0&0&...&0\\
		0&0&...&0&1&s&...&s^v
	\end{array}\right),
\end{array}
\end{equation}
where the matrix on the right represents the line spanned by the row vectors.

The embedding $S_{u,v}\subset\lP^N$ can be seen as the Hirzebruch surface $\F_{v-u}$ embedded in $\lP^N$ through the complete linear system $|g+uf|$. Conversely, every nondegenerate, irreducible and smooth surface of degree $D$ in $\lP^{D+1}$ isomorphic to $\F_{v-u}$ must be $S_{u,v}$.\cite[p.522-525]{GH94}

It's not hard to compute that $H^1\left(T_{\lP^N}|_{\F_{u-v}}\right)=0$ under the above embedding. Combining this with the rigidity result, it implies that every abstract deformation of $\F_{v-u}$ can be lifted to an embedded deformation as a family of rational normal scrolls in $\lP^N$. \cite[Remark 20.2.1]{Har10} We conclude this as the following lemma:
\begin{lemma}\label{embDef}
	For $m>2k\geq0$, let $\mathcal{F}$ be an abstract deformation of Hirzebruch surfaces such that $\mathcal{F}_0\cong\F_m$ and $\mathcal{F}_t\cong\F_{m-2k}$ for $t\neq0$. Then $\mathcal{F}$ can be realized as an embedded deformation $\mathcal{S}$ in $\lP^{D+1}$ with $\mathcal{S}_0\cong S_{u,v}$ and $\mathcal{S}_t\cong S_{u+k,v-k}$ for $t\neq0$, where $D=u+v$, and $u\leq v$ are any positive integers satisfying $v-u=m$.
\end{lemma}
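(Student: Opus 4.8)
The plan is to realize the abstract family $\mathcal{F}$ inside the \emph{fixed} projective space $\lP^{D+1}$ by a deformation‑theoretic lifting argument, using the embedding $S:=S_{u,v}\subset\lP^N$ with $N=D+1=u+v+1$ that presents $\F_{v-u}=\F_m$ through the complete linear system $|g+uf|$. First I would compare two local deformation functors at $[S]$: the local Hilbert functor of $S\subset\lP^N$, whose tangent and obstruction spaces are $H^0(N_{S/\lP^N})$ and $H^1(N_{S/\lP^N})$, and the abstract deformation functor of $S$, with tangent and obstruction spaces $H^1(T_S)$ and $H^2(T_S)$. The natural transformation between them forgets the embedding, and the point is to show it is \emph{smooth} at $[S]$; then the given abstract family lifts.

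The comparison is governed by the normal bundle sequence
\[
	0\to T_S\to T_{\lP^N}|_S\to N_{S/\lP^N}\to0,
\]
whose long exact cohomology sequence supplies the connecting maps $H^0(N_{S/\lP^N})\to H^1(T_S)$ on tangent spaces and $H^1(N_{S/\lP^N})\to H^2(T_S)$ on obstruction spaces. The key input is the vanishing $H^1\!\left(T_{\lP^N}|_S\right)=0$ recorded before the lemma: it makes the first connecting map surjective, so every first‑order abstract deformation lifts to an embedded one, and it makes the second connecting map injective, so every obstruction to deforming $S\subset\lP^N$ injects into the abstract obstruction space. By the standard smoothness criterion for a morphism of functors of Artin rings carrying a compatible tangent–obstruction theory (surjective on tangents, injective on obstructions), these two facts together show that the forgetful morphism is smooth at $[S]$.

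Granting smoothness, the lift is immediate. Lemma \ref{absDef} gives $H^2(\F_m,T_{\F_m})=0$, so the abstract deformation functor of $S\cong\F_m$ is unobstructed and the prescribed family $\mathcal{F}\to\C$ (e.g. the explicit one in (\ref{defHir})) defines a germ into its smooth base; lifting this germ along the smooth forgetful morphism produces a flat embedded family $\mathcal{S}\subset\lP^{D+1}$ with $\mathcal{S}_0\cong S_{u,v}$ and abstract fibers agreeing with those of $\mathcal{F}$, so $\mathcal{S}_t\cong\F_{m-2k}$ for $t\neq0$. It then remains to recognize $\mathcal{S}_t$ as an honest rational normal scroll. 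Flatness keeps the Hilbert polynomial constant, so $\deg\mathcal{S}_t=D$; Lemma \ref{hHir} shows the higher cohomology of the hyperplane class vanishes, whence $\pi_*\fO_{\mathcal{S}}(1)$ is locally free of rank $D+2=N+1$, and since the restriction $H^0(\lP^N,\fO(1))\to H^0(\mathcal{S}_t,\fO(1))$ is an isomorphism on the nondegenerate central fiber, lower semicontinuity of its rank forces it to stay injective for small $t\neq0$, so $\mathcal{S}_t$ is smooth and nondegenerate. By the classification recalled above, a smooth nondegenerate degree‑$D$ surface in $\lP^{D+1}$ isomorphic to $\F_{m-2k}$ must be the scroll of type $(u+k,v-k)$, because $(u+k)+(v-k)=u+v=D$ and $(v-k)-(u+k)=m-2k$; hence $\mathcal{S}_t\cong S_{u+k,v-k}$.

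The delicate part is the passage from the infinitesimal statement to an actual one‑parameter family: everything hinges on the vanishing $H^1\!\left(T_{\lP^N}|_S\right)=0$, and converting formal liftability into a convergent analytic family over a disc is exactly where the rigidity result \cite[Remark 20.2.1]{Har10} does the work. A secondary subtlety is the final identification, where one must rule out the general fiber degenerating into a smaller linear span; I would control this precisely through the constancy of $h^0(\mathcal{S}_t,\fO(1))$ and the semicontinuity argument above, rather than by an abstract openness claim for nondegeneracy.
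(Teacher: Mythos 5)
Your argument is correct and follows essentially the same route as the paper: the paper also deduces the lifting from the vanishing $H^1\bigl(T_{\lP^{D+1}}|_{S_{u,v}}\bigr)=0$ together with \cite[Remark 20.2.1]{Har10}, and identifies the general fiber via the classification of smooth nondegenerate degree-$D$ surfaces in $\lP^{D+1}$. Your write-up merely makes explicit the functorial smoothness criterion and the nondegeneracy/semicontinuity step that the paper leaves implicit.
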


\subsection{Rational scrolls}
\begin{defn}
We call a surface $S\subset\lP^N$ a rational scroll (or a scroll) of type $(u,m+u)$ if it is the image of a Hirzebruch surface $\F_m$ through a birational morphism defined by an $N$-dimensional subsystem $\textfrak{d}\subset|g+uf|$ for some $u>0$.
\end{defn}

Equivalently, $S\subset\lP^N$ is a rational scroll of type $(u,v)$ either if it is a rational normal scroll $S_{u,v}$, or if it is the projection image of $S_{u,v}\subset\lP^{D+1}$ from a $(D-N)$-plane disjoint from $S_{u,v}$. Here $D=u+v$ is the degree of $S_{u,v}$ as well as the degree of $S$. In the latter case, we also call a line on $S$ a ruling if its preimage is a ruling on $S_{u,v}$.

The following lemma computes the cohomology groups of the normal bundle for an arbitrary embedding of a Hirzebruch surface into a projective space.

\begin{lemma}\label{h0h1}
Let $\iota:\F_m\hookrightarrow\lP^N$ be an embedding with image $S$ and $\iota^*\fO_{\lP^N}(1)\cong\fO_S(h)$, where $h=ag+bf$ with $a>0$ and $b>0$. Let $N_{S/\lP^N}$ be the normal bundle of $S$ in $\lP^N$, then
\[
	h^0(S,N_{S/\lP^N})= (N+1)(a+1)(\frac{am}{2}+b+1)-7
\]
and $h^i(S,N_{S/\lP^N})=0,\forall i>0$. Especially, if $S$ is a smooth scroll of degree $D$, then the formula for $h^0$ reduces to
\[
	h^0(S,N_{S/\lP^N})= (N+1)(D+2)-7.
\]
\end{lemma}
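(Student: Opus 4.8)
The plan is to compute the cohomology of $N_{S/\lP^N}$ by threading it through the two standard short exact sequences that relate it to the tangent bundles of $S$ and of the ambient $\lP^N$, feeding in the line-bundle cohomology on $\F_m$ supplied by Lemma~\ref{hHir} together with the deformation-theoretic vanishing of Lemma~\ref{absDef}.

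First I would restrict the Euler sequence on $\lP^N$ to $S$. Since $\iota^*\fO_{\lP^N}(1)\cong\fO_S(h)$, tensoring with the locally free $\fO_S$ keeps the sequence exact and yields
\[
	0\to\fO_S\to\fO_S(h)^{\oplus(N+1)}\to T_{\lP^N}|_S\to 0.
\]
Because $S\cong\F_m$ is rational, $h^0(\fO_S)=1$ and $h^i(\fO_S)=0$ for $i>0$; and as $h=ag+bf$ is ample (here $a,b>0$), Lemma~\ref{hHir} gives $h^0(\fO_S(h))=(a+1)(\tfrac{am}{2}+b+1)$ with $h^i(\fO_S(h))=0$ for $i>0$. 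Reading the associated long exact sequence, the injection $H^0(\fO_S)\hookrightarrow H^0(\fO_S(h))^{\oplus(N+1)}$ together with the vanishing of all higher terms forces $h^i(T_{\lP^N}|_S)=0$ for $i>0$ and
\[
	h^0(T_{\lP^N}|_S)=(N+1)(a+1)\left(\tfrac{am}{2}+b+1\right)-1.
\]

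Next I would run the normal bundle sequence $0\to T_S\to T_{\lP^N}|_S\to N_{S/\lP^N}\to 0$. Lemma~\ref{absDef} supplies $H^2(\F_m,T_{\F_m})=0$, and combined with the vanishing of $H^1(T_{\lP^N}|_S)$ and $H^2(T_{\lP^N}|_S)$ from the previous step, the long exact sequence immediately collapses to $h^i(N_{S/\lP^N})=0$ for all $i>0$ and to the four-term sequence
\[
	0\to H^0(T_S)\to H^0(T_{\lP^N}|_S)\to H^0(N_{S/\lP^N})\to H^1(T_S)\to 0,
\]
whence $h^0(N_{S/\lP^N})=h^0(T_{\lP^N}|_S)-\chi(T_{\F_m})$, using $h^0(T_S)-h^1(T_S)=\chi(T_{\F_m})$ (the $h^2$ term being zero by Lemma~\ref{absDef}). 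The only remaining numerical input is the Euler characteristic $\chi(T_{\F_m})$, which I would settle once and for all by Hirzebruch--Riemann--Roch: with $K_{\F_m}^2=8$ and topological Euler number $c_2=4$ one gets $\chi(T_{\F_m})=\tfrac{7}{6}K^2-\tfrac{5}{6}c_2=6$, independent of $m$. Substituting gives
\[
	h^0(N_{S/\lP^N})=(N+1)(a+1)\left(\tfrac{am}{2}+b+1\right)-7,
\]
and the smooth-scroll reduction is the specialization $a=1$, $b=u$, $m=v-u$, for which $(a+1)(\tfrac{am}{2}+b+1)=u+v+2=D+2$.

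I do not expect a serious obstacle: once the two exact sequences and the two input lemmas are in place, the argument is a bookkeeping diagram chase. The only points demanding care are tracking the vanishing of the higher cohomology of $T_{\lP^N}|_S$ so that the normal-bundle long exact sequence genuinely collapses, and computing the single constant $\chi(T_{\F_m})=6$ correctly, since it is exactly the source of the final $-7$ (the $-1$ coming from $h^0(\fO_S)=1$ in the Euler step, and the remaining $-6=-\chi(T_{\F_m})$ from the normal-bundle step).
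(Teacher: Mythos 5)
Your argument is correct and follows the paper's proof essentially verbatim: restrict the Euler sequence to $S$ to get $h^0(T_{\lP^N}|_S)=(N+1)(a+1)(\tfrac{am}{2}+b+1)-1$ with vanishing higher cohomology, then feed this into the normal bundle sequence using $h^2(T_S)=0$ from Lemma~\ref{absDef} and $\chi(T_{\F_m})=6$ from Hirzebruch--Riemann--Roch. Your specialization $a=1$, $b=u$, $m=v-u$ for the scroll case is the right one (the paper's phrase ``inserting $a=0$'' is a typo for $a=1$), and your explicit HRR computation $\chi(T_{\F_m})=\tfrac{7}{6}K^2-\tfrac{5}{6}c_2=6$ supplies a detail the paper only asserts.
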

\begin{proof}
The short exact sequence
\begin{equation}\label{h0h11}
	0\rightarrow T_S\rightarrow T_{\lP^N}|_{S}\rightarrow N_{S/\lP^N}\rightarrow0
\end{equation}
has the long exact sequence
\[\begin{array}{ccccccccc}
	0&\rightarrow&H^0(S,T_S)&\rightarrow&H^0(S,T_{\lP^N}|_{S})&\rightarrow&H^0(S,N_{S/\lP^N})&&\\
	&\rightarrow&H^1(S,T_S)&\rightarrow&H^1(S,T_{\lP^N}|_{S})&\rightarrow&H^1(S,N_{S/\lP^N})&&\\
	&\rightarrow&H^2(S,T_S)&\rightarrow&H^2(S,T_{\lP^N}|_{S})&\rightarrow&H^2(S,N_{S/\lP^N})&\rightarrow&0.
\end{array}\]
In order to calculate the dimensions in the right, we need the dimensions in the first two columns.

For the middle column, we can restrict the Euler exact sequence $$0\rightarrow\fO_{\lP^N}\rightarrow\fO_{\lP^N}(1)^{\oplus(N+1)}\rightarrow T_{\lP^N}\rightarrow0$$ to $S$ and obtain
\[
	0\rightarrow\fO_S\rightarrow\fO_S(h)^{\oplus(N+1)}\rightarrow T_{\lP^N}|_S\rightarrow0.
	\footnote{Tor$_i^{\fO_{\lP^N}}(\fO_S,\mathscr{Ful98})=0$ for all $i>0$ and locally free sheaf $\mathscr{Ful98}$, so the Euler exact sequence keeps exact after the restriction.}
\]
Lemma \ref{hHir} confirms that $h^i(S,\fO_S(h))=0$ for $i>0$, so we have
\[
	0\rightarrow H^0(S,\fO_S)\rightarrow H^0(S,\fO_S(h))^{\oplus(N+1)}\rightarrow H^0(S,T_{\lP^N}|_S)\rightarrow0
\]
from the associated long exact sequence while the other terms are all vanishing. It follows that
\[\begin{array}{rcl}
	h^0(S,T_{\lP^N}|_S) &=& (N+1)h^0(S,\fO_S(h)) - h^0(S,\fO_S)\\
	&\stackrel{\ref{hHir}}{=}& (N+1)(a+1)(\frac{am}{2}+b+1)-1.
\end{array}\]

For the first column, one can use the Hirzebruch-Riemann-Roch formula to compute that $\chi(T_S)=6$. We also have $h^2(S,T_S)=0$ by Lemma \ref{absDef}. Thus $h^0(S,T_S)-h^1(S,T_S)=\chi(T_S)=6$.

Collecting the above results, the long exact sequence for (\ref{h0h11}) now becomes
\[\begin{array}{ccccccccc}
	0&\rightarrow&H^0(S,T_S)&\rightarrow&H^0(S,T_{\lP^N}|_{S})&\rightarrow&H^0(S,N_{S/\lP^N})&&\\
	&\rightarrow&H^1(S,T_S)&\rightarrow&0&\rightarrow&H^1(S,N_{S/\lP^N})&&\\
	&\rightarrow&0&\rightarrow&0&\rightarrow&H^2(S,N_{S/\lP^N})&\rightarrow&0.
\end{array}\]
Therefore we have $h^i(S,N_{S/\lP^N})=0,\forall i>0$, and
\[\begin{array}{rcl}
	h^0(S,N_{S/\lP^N}) &=& h^0(S,T_{\lP^N}|_{S})-\chi(T_S)\\
	&=& (N+1)(a+1)(\frac{am}{2}+b+1)-7.
\end{array}\]

When $S$ is a rational scroll, we have $h=g+bf$. Then the formula is obtained by inserting $a=0$ and $D=h^2=m+2b$ into the equation.
\end{proof}

\subsection{Isolated singularities on rational scrolls}
The singularities on a rational scroll are all caused from projection by definition, so we assume $D\geq N$. We also assume $N\geq 5$.

Let $S\subset\lP^N$ be a rational scroll under $S_{u,v}\subset\lP^{D+1}$ and let $q:S_{u,v}\rightarrow S$ be the projection. A point $p\in S$ is singular if and only if one of the following situations occurs
\begin{itemize}
	\item There are two distinct rulings $l$, $l'\subset S_{u,v}$ such that $p\in q(l)\cap q(l')$.
	\item There is a ruling $l\subset S_{u,v}$ such that $p\in q(l)$ and the map $q$ is ramified at $l$.
\end{itemize}

Suppose that $S$ has isolated singularities, i.e. the singular locus of $S$ has dimension zero. Then each singular point is set-theoretically the intersection of two or more rulings. Let $m$ be the number of the ruling which passes through any of the singular points. Then the number of singularities on $S$ is counted as ${m\choose 2}$.

Note that $S_{u,v}$ is cut out by quadrics so every secant line intersects $S_{u,v}$ in exactly two points transversally. Let $T(S_{u,v})\subset S(S_{u,v})$ respectively be the tangent and the secant varieties of $S_{u,v}$. Then every $x\in S(S_{u,v})-T(S_{u,v})$ belongs to one of the two conditions
\begin{enumerate}
	\item The point $x$ lies on one and only one secant line.
	\item\label{sec2} The point $x$ lies on two secant lines. Let $Z_2\subset S(S_{u,v})$ denote the union of such points.
\end{enumerate}

\begin{lemma}\label{noMoreSec}
	The subset $Z_2\neq\emptyset$ if and only if $u=2$. In this situation, $Z_2\cong\lP^2$ and each $x\in Z_2$ lies on infinitely many secant lines.
\end{lemma}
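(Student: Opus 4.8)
The plan is to turn membership in $Z_2$ into the coplanarity of four marked points of $S_{u,v}$ and then read everything off from Vandermonde determinants in the coordinates (\ref{stdRNS}). If $x\in Z_2$, then $x$ lies on two distinct genuine secant lines $\ell_1,\ell_2$. Since $S_{u,v}$ is cut out by quadrics, a line meeting it in three points lies on $S_{u,v}$, so there are no genuine trisecants; in particular, if $\ell_1$ and $\ell_2$ shared an endpoint they would meet only there and force $x\in S_{u,v}\subseteq T(S_{u,v})$, which is excluded. Hence $\ell_1,\ell_2$ have four distinct endpoints on four distinct rulings (two endpoints on one ruling would make a chord equal to that ruling). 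Writing the endpoints as $P_i=(1,s_i,\dots,s_i^{u},t_i,t_is_i,\dots,t_is_i^{v})$ with the $s_i$ pairwise distinct, their coplanarity means there is a nonzero relation $\sum_{i=1}^4\lambda_iP_i=0$, which splits along the first $u+1$ and the last $v+1$ coordinates into
\[
\sum_{i=1}^4\lambda_is_i^{\,j}=0\ \ (0\le j\le u),\qquad
\sum_{i=1}^4\lambda_it_is_i^{\,j}=0\ \ (0\le j\le v).
\]

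I would then argue by cases, noting that $u\le v$ together with the standing hypothesis $D=u+v\ge5$ forces $v\ge3$. If $u\ge3$, the first system already contains the equations for $j=0,1,2,3$, a $4\times4$ Vandermonde system with distinct nodes, so its only solution is $\lambda=0$; this contradicts coplanarity, and $Z_2=\emptyset$. If $u=1$, put $\mu_i=\lambda_it_i$; the second system contains $\sum_i\mu_is_i^{\,j}=0$ for $j=0,1,2,3$ (as $v\ge4$), forcing $\mu=0$, i.e.\ $\lambda_it_i=0$ for all $i$. The first system $\sum_i\lambda_i=\sum_i\lambda_is_i=0$ then needs at least three nonzero $\lambda_i$ (three distinct points of the line $P_1$ are dependent), so at least three of the endpoints have $t_i=0$ and lie on the directrix line $C_1=P_1\subset S_{u,v}$; but then one of the two chords lies on $S_{u,v}$ and is not a genuine secant. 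Hence $Z_2=\emptyset$ in this case too.

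The only surviving case is $u=2$, where $v\ge3$. Here the first system ($j=0,1,2$) has rank $3$ and determines $\lambda$ up to scale as the unique dependence among the four conic points $(1,s_i,s_i^{2})$, all of whose coordinates $\lambda_i$ are nonzero. Substituting into the second system and using $j=0,1,2,3$ (available since $v\ge3$) forces $\mu_i=\lambda_it_i=0$, hence $t_i=0$ for every $i$. Thus all four endpoints lie on the directrix conic $C_1\subset P_1$, so $x\in P_1\cong\lP^2$. Conversely, because $C_1$ is a conic spanning $P_1$, each $x\in P_1$ off $C_1$ carries the whole pencil of chords of $C_1$ through it---infinitely many secant lines of $S_{u,v}$---and $S_{u,v}\cap P_1=C_1$ shows such $x$ lie off the surface. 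This identifies $Z_2$ with $P_1\cong\lP^2$ and shows that each of its points lies on infinitely many secants, as required.

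I expect the only delicate point to be the bookkeeping of boundary and degenerate configurations. Endpoints at $t=\infty$ lie on $C_2$ and are treated by the symmetric chart centred on $C_2$, where the analogous Vandermonde has degree $v\ge3$ and so produces nothing new; pairs sharing a point or a ruling were already absorbed into $S_{u,v}\subseteq T(S_{u,v})$. I would also emphasize that the hypothesis $D=u+v\ge5$ is essential and not cosmetic: it rules out $S_{2,2}\subset\lP^5$, whose constant-$t$ sections are conics spanning a one-parameter family of planes, for which $Z_2$ would be three-dimensional and the clean description $Z_2\cong\lP^2$ would fail.
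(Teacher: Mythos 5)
Your proof is correct in substance but takes a genuinely different route from the paper's. Where you split the coplanarity relation $\sum_i\lambda_iP_i=0$ into two Vandermonde systems along the block decomposition of the coordinates (\ref{stdRNS}) and run a case analysis on $u$, the paper argues synthetically: it first shows the four intersection points lie on four distinct rulings, then invokes the existence of a rational normal curve $C\subset S_{u,v}$ (sectional or residual) of degree $\geq u$ through those four points, as in \cite[Example 8.17]{Har95}; coplanarity imposes a nontrivial linear relation on four points of $C$, forcing $C$ to be a line or a conic, and degree considerations (for $u=1$ the conic would have to dominate $C_2$, which has degree $v\geq4$) leave only $u=2$ with $C$ the directrix. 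Your computation buys self-containedness---no appeal to curves through four points of a scroll---and it exhibits $Z_2$ concretely as the plane $P_1$ of the directrix conic with the role of the hypothesis $v\geq3$ (equivalently $D\geq5$) completely transparent; the paper's argument is shorter and coordinate-free but leans on two external facts. The converse direction is the same in both.

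There is one case you do not actually close: the parenthetical ``two endpoints on one ruling would make a chord equal to that ruling'' only excludes both endpoints of the \emph{same} secant lying on a ruling. If an endpoint of $\ell_1$ and an endpoint of $\ell_2$ lie on the same ruling, neither chord is that ruling, yet two of your nodes coincide (say $s_1=s_3$) and the Vandermonde argument as written does not apply. The paper disposes of this configuration with its ``three rulings would give a trisecant of $C_2$'' step. Within your framework the patch is immediate: grouping the coincident nodes, the first block for $u\geq2$ forces $\lambda_2=\lambda_4=\lambda_1+\lambda_3=0$, and the second block then gives $\lambda_1t_1+\lambda_3t_3=0$, whence $\lambda_1=\lambda_3=0$ and no such coplanar quadruple exists; for $u=1$ the spanned plane is $\langle l,C_1\rangle$, which is exactly the tangent plane to $S_{1,v}$ at $l\cap C_1$ and hence lies in $T(S_{1,v})$, so the resulting $x$ is excluded from $Z_2$. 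You should add this case explicitly, and ideally parametrize points of a ruling homogeneously as $\alpha(1,s,\dots,s^u,0,\dots,0)+\beta(0,\dots,0,1,s,\dots,s^v)$ so that endpoints on $C_2$ are covered uniformly rather than deferred to a ``symmetric chart.''
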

\begin{proof}
We retain the notation used in constructing $S_{u,v}$ throughout the proof.

Let $x\in Z_2$ be any point. First we claim that the intersection points of $S_{u,v}$ with the union of the two secants described in (\ref{sec2}) lie on four distinct rulings.

The intersection points don't lie on two rulings because any two distinct rulings are linearly independent.\cite[Exercise 8.21]{Har95} If they lie on three rulings, then the projection to $P_2$ would be a trisecant line of $C_2$. But this is impossible because $C_2$ has degree $v\geq\lceil\frac{D}{2}\rceil\geq2$. Hence the claim holds.

The claim admits a rational normal curve $C\subset S_{u,v}$ (either sectional or residual) of degree $\geq u$ passing through the four intersection points.\cite[Example 8.17]{Har95} This imposes a non-trivial linear relation on four distinct points on $C$, which forces $C$ to be either a line or a conic. If $C$ is a line then $C$ coincides with the two secant lines, which is impossible. Hence $C$ must be a conic..

It follows that $u\leq\deg C\leq2$. If $u=1$, then the conic $C$ would dominate $C_2$ through the projection from $P_1$. However, this cannot happen since $C_2$ has degree $v=D-u\geq4$. Hence $u=2$. In this condition, $C$ can only be the directrix since $u<v$. It follows that the $Z_2$ coincides with the 2-plane spanned by $C$ and each point of $Z_2$ lies on infinitely many secants.

Conversely, $u=2$ implies that $Z_2$ contains the 2-plane spanned by $C_2$. By the same arguement above they coincide and every $x\in Z_2$ lies on infinitely many secants.
\end{proof}

Assume that $S$ is the projection of $S_{u,v}$ from a $(D-N)$-plane $Q\subset\lP^{D+1}$.
\begin{cor}\label{isoSingEq}
The scroll $S$ is singular along $r$ points if and only if $Q$ intersects $S(S_{u,v})$ in $r$ points away from $T(S_{u,v})\cup Z_2$.
\end{cor}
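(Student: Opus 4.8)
The plan is to translate the singularities of $S$ into incidence conditions on the projection centre $Q$, reading the count off from the secant and tangent lines of $S_{u,v}$ that pass through $Q$. Since $q\colon S_{u,v}\to S$ is the restriction of the linear projection $\Lambda_Q$, two distinct points $p_1,p_2\in S_{u,v}$ satisfy $q(p_1)=q(p_2)$ exactly when the line $\overline{p_1p_2}$ meets $Q$, and $q$ is ramified along a ruling exactly when some tangent line of $S_{u,v}$ meets $Q$. Because $S_{u,v}$ is cut out by quadrics, a line joining two of its points is a genuine secant meeting $S_{u,v}$ only in those two points, so the first case records a point of $Q\cap S(S_{u,v})$ lying off $T(S_{u,v})$, while the ramification case records a point of $Q\cap T(S_{u,v})$. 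Thus every singular point of $S$ is accounted for by a point of $Q\cap S(S_{u,v})$, and conversely.

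First I would set up the bijection on the complement $Q\cap(S(S_{u,v})-T(S_{u,v})-Z_2)$. Fix such a point $x$. By Lemma~\ref{noMoreSec} it lies on a unique secant $\overline{p_1p_2}$ with $p_1\neq p_2$, and since $x\in Q$ this secant meets the centre, so $q(p_1)=q(p_2)=:y$ is a singular point of $S$ where the images of the two rulings through $p_1,p_2$ cross transversally. Conversely, assuming $S$ has isolated singularities, the discussion preceding Lemma~\ref{noMoreSec} shows each singular point $y$ is the transversal crossing of exactly two rulings, so its fibre $q^{-1}(y)$ consists of exactly two points $p_1\neq p_2$; the secant $\overline{p_1p_2}$ meets $Q$ in a single point $x$, which lies off $T(S_{u,v})$ (it is a genuine secant) and off $Z_2$ (the secant is unique). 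The uniqueness of the secant through a point off $Z_2$ makes $x\mapsto y$ injective, and the two-point fibre makes it surjective, giving the desired bijection and hence the equality of the two counts.

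Finally I would explain why $T(S_{u,v})$ and $Z_2$ must be excluded, which is also what forces the singular locus to be zero-dimensional. A point of $Q\cap T(S_{u,v})$ comes from a tangent rather than a secant, so it records ramification of $q$, producing a pinch-type point instead of a non-normal ordinary double point; avoiding $T(S_{u,v})$ keeps $q$ an immersion with transversal branches. A point of $Q\cap Z_2$ can occur only when $u=2$ and, by Lemma~\ref{noMoreSec}, lies on infinitely many secants; its presence in $Q$ would identify a one-parameter family of point pairs and hence create a positive-dimensional singular locus. Therefore $S$ is singular along exactly $r$ isolated points if and only if $Q\cap S(S_{u,v})$ consists of $r$ points, none of them on $T(S_{u,v})\cup Z_2$. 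The main obstacle is the injectivity of $x\mapsto y$: one must rule out two distinct intersection points collapsing onto a single singular point, and the cleanest way to do so is to combine the quadric-generation of $S_{u,v}$ with the off-$Z_2$ hypothesis, using that $Q$ is a hyperplane inside each fibre $(D-N+1)$-plane of $\Lambda_Q$, so that any extra identification within a fibre would force a point of $Q$ to lie on two distinct secants and hence into $Z_2$.
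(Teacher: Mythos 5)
Your overall strategy --- translating identifications under the projection into secants through $Q$ and invoking Lemma \ref{noMoreSec} to get a unique secant through each point of $Q\cap S(S_{u,v})$ off $T(S_{u,v})\cup Z_2$ --- is the same as the paper's. But there is a genuine gap in the converse direction, and it is exactly at the point you yourself flag as the main obstacle. You assert that each singular point $y$ of $S$ is the transversal crossing of \emph{exactly two} rulings, citing the discussion preceding Lemma \ref{noMoreSec}; that discussion in fact says each isolated singular point is the intersection of two \emph{or more} rulings, and that a point where $m$ rulings meet is counted as $\binom{m}{2}$ singularities. This multiplicity convention is essential: it is how the plane $k$-chains of Section \ref{sect:constr} contribute $\binom{k}{2}$ singularities each. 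Accordingly, the paper's proof does not set up a bijection between set-theoretic singular points and points of $Q\cap S(S_{u,v})$; it sets up a bijection between \emph{pairs} $(l,l')$ of distinct rulings whose images meet and points of $Q\cap S(S_{u,v})$ off $T(S_{u,v})\cup Z_2$, explicitly noting that different pairs may produce the same singular point.

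Your proposed repair of injectivity does not close the gap. If three points $p_1,p_2,p_3$ of $S_{u,v}$ lie in one fibre of $\Lambda_Q$ (so three rulings pass through the single singular point $y$), the three secants $\overline{p_ip_j}$ meet $Q$ in three points, and these are \emph{distinct} away from $Z_2$: two of the secants meet only at some $p_i\notin Q$ (since $Q$ is disjoint from $S_{u,v}$), so they cannot hit $Q$ at a common point unless that point lies on two secants, i.e.\ lies in $Z_2$. Thus the off-$Z_2$ hypothesis guarantees the three intersection points are distinct, yet all three map to the same $y$; your map $x\mapsto y$ is genuinely non-injective there, and no point of $Q$ is forced into $Z_2$. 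The statement you prove (set-theoretic count of singular points equals $\#\bigl(Q\cap S(S_{u,v})\setminus(T(S_{u,v})\cup Z_2)\bigr)$) is therefore false in this degenerate case, while the paper's weighted version remains correct. The fix is simply to adopt the paper's counting convention and match pairs of rulings, rather than singular points, with intersection points of $Q$ and the secant variety. Your treatment of the exclusions $T(S_{u,v})$ and $Z_2$, and of the forward direction, is otherwise consistent with the paper.
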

\begin{proof}
Assume that $S$ has isolated singularities. Recall that the number $r$ counts the number of the pair $(l,l')$ of distinct rulings on $S_{u,v}$ such that $q(l)$ intersects $q(l')$ in one point. (Different pairs might intersect in the same point.) It then counts the number of the unique line joining $l$, $l'$ and $Q$. By Lemma \ref{noMoreSec}, each $x\in S(S_{u,v})$ away from $T(S_{u,v})\cup Z_2$ lies on a unique secant. Thus it is the same as the number of the intersection between $Q$ and $S(S_{u,v})$ away from $T(S_{u,v})\cup Z_2$.
\end{proof}

In the end we provide a criterion for $S$ to have isolated singularities when $u=1$. This is going to be used in proving Proposition \ref{fourSq}.

\begin{prop}\label{noCSing}
Assume $u=1$. If $Q\cap T(S_{1,v})=\emptyset$, then $S$ has isolated singularities.
\end{prop}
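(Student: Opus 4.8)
The goal is to show: if $u=1$ and the projection center $Q$ (a $(D-N)$-plane in $\lP^{D+1}$) is disjoint from the tangent variety $T(S_{1,v})$, then the scroll $S=\Lambda_Q(S_{1,v})$ has only isolated singularities. Let me think through the structure here.

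Recall the two sources of singularities from the classification just before Lemma~\ref{noMoreSec}: a singular point of $S$ comes either from two distinct rulings $l,l'$ whose images meet (a genuine secant of $S_{u,v}$ passing through $Q$), or from a single ruling at which $q$ ramifies (a tangent line to $S_{u,v}$ meeting $Q$). The hypothesis $Q\cap T(S_{1,v})=\emptyset$ kills the second, ramification-type source entirely. So the singularities are all of secant type, and by Lemma~\ref{noMoreSec} the special locus $Z_2$ is empty when $u=1$. Thus every point of the secant variety away from $T(S_{1,v})$ lies on a *unique* secant line.

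**Strategy: bound the dimension of the singular locus.**

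The plan is to argue that the singular locus has dimension zero by a dimension count on the secant variety. By Corollary~\ref{isoSingEq} (or directly the preceding analysis), the singular locus of $S$ is in bijection with $Q\cap\big(S(S_{1,v})\setminus T(S_{1,v})\big)$, each such intersection point lying on a unique secant. The secant variety $S(S_{1,v})$ of a surface in $\lP^{D+1}$ has dimension at most $5$ (it is the closure of the union of lines through pairs of points on a $2$-dimensional variety, so $\dim \le 2+2+1=5$). The projection center $Q$ has dimension $D-N$, and these both sit inside $\lP^{D+1}$.

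**Key computation and the main obstacle.**

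First I would verify that $Q$ meets $S(S_{1,v})$ in dimension $0$, i.e. the expected-dimension count $\dim Q + \dim S(S_{1,v}) - (D+1) = (D-N)+5-(D+1)=4-N\le -1$ for $N\ge5$, so a generic such intersection is empty and in any case finite-dimensional bounding is subtle because $Q$ is not assumed generic. The hypothesis only controls $Q\cap T(S_{1,v})$, not $Q\cap S(S_{1,v})$. So the real content is: away from the tangent variety, the incidence structure "point $\mapsto$ its unique secant" is well-behaved. The hard part will be showing that if $Q$ met $S(S_{1,v})\setminus T(S_{1,v})$ in a positive-dimensional set, one would be forced to produce either a point on infinitely many secants (impossible since $Z_2=\emptyset$ for $u=1$ by Lemma~\ref{noMoreSec}) or a tangent line through $Q$ (contradicting $Q\cap T(S_{1,v})=\emptyset$). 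Concretely I would consider the incidence variety of (secant line, point on it) and project; a positive-dimensional family of singular points on $S$ would pull back to a positive-dimensional family of secants of $S_{1,v}$ all meeting $Q$, and I would analyze the limit behavior of these secants. A one-parameter degeneration of secant lines all passing through the fixed plane $Q$ has a flat limit which is either another secant through $Q$ or a tangent line through $Q$; iterating along a positive-dimensional family and using that $u=1$ forces (via the $Z_2=\emptyset$ conclusion) the secants through a fixed point to be unique, I expect to corner the family into producing a tangent line meeting $Q$, the desired contradiction.

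**Remaining step.**

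Once the singular locus is shown to be finite, I am done, since "isolated singularities" means exactly dimension-zero singular locus. I would close by remarking that the $u=1$ hypothesis is what makes the argument clean: it guarantees $Z_2=\emptyset$, so there is no stratum of $S(S_{1,v})$ along which a single point absorbs a whole family of secants, and the only way a family of secants through $Q$ can degenerate within $Q$'s reach is by limiting to a tangent line — which the hypothesis forbids.
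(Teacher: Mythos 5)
Your reduction is the right one: the hypothesis $Q\cap T(S_{1,v})=\emptyset$ rules out ramification-type singularities, Corollary \ref{isoSingEq} handles the case where $Q$ meets $S(S_{1,v})$ in finitely many points, and so everything comes down to showing that a curve $\Gamma\subset Q\cap S(S_{1,v})$ would be forced to meet $T(S_{1,v})$. But the mechanism you propose for this last step --- degenerating a positive-dimensional family of secants through $Q$ until it ``corners'' into a tangent line --- is precisely the step you do not carry out, and as stated it does not work: a positive-dimensional family of secant lines of $S_{1,v}$ need not contain a tangent line in its closure (consider the secants joining a fixed point $p\in S_{1,v}$ to the points of a curve in $S_{1,v}$ avoiding $p$; this is a complete one-parameter family of honest secants). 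So the assertion that the flat limits must eventually produce a tangent line requires an argument, and ``iterating along a positive-dimensional family'' does not supply one. The observation that $Z_2=\emptyset$ for $u=1$ only gives uniqueness of the secant through each point off $T(S_{1,v})$; it does not by itself prevent a curve's worth of such points from lying in $Q$.

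The paper closes this gap with a specific geometric device: it exhibits $S(S_{1,v})$ as the image of the $\lP^3$-bundle $\varmathbb{S}\rightarrow|2f|\cong\lP^2$ whose fiber over $l+l'$ is the 3-plane $P_{l,l'}$ spanned by the two rulings, the tangent variety being the image of the fibers over the diagonal $\Delta\subset|2f|$. A curve $\Gamma\subset S(S_{1,v})$ then either maps to a curve in $|2f|\cong\lP^2$ --- which must meet $\Delta$ because any two curves in $\lP^2$ intersect, whence $\Gamma\cap T(S_{1,v})\neq\emptyset$ --- or lies in a single fiber $P_{l,l'}$, and here the hypothesis $u=1$ enters: the directrix is a line contained in $P_{l,l'}$, so $T(S_{1,v})\cap P_{l,l'}$ contains the two 2-planes spanned by the directrix with $l$ and with $l'$, i.e.\ hyperplanes of $P_{l,l'}\cong\lP^3$, which any curve in $P_{l,l'}$ must meet. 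Neither the $\lP^2$ intersection argument nor the in-fiber case appears in your sketch; without them the proof is incomplete.
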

\begin{proof}
If $Q$ intersects $S(S_{1,v})$ in points, then the proposition follows from Corollary \ref{isoSingEq}.

Assume $Q\cap S(S_{1,v})$ contains a curve $\Gamma$. We are going to show that $\Gamma$ intersects $T(S_{1,v})$ nontrivially which then contradicts our hypothesis.

Let $f$ be the fiber class of $S_{1,v}$. Then the linear system $|f|$ parametrizes the rulings of $S_{1,v}$. For distinct $l,l'\in|f|$, the linear span of $l$ and $l'$ is a 3-space $P_{l,l'}\subset S(S_{1,v})$. Consider the incidence correspondence
\[
	\varmathbb{S} = \{(x,l+l')\in\lP^{D+1}\times|2f|:x\in P_{l,l'}\}.
\]
Observe that $\varmathbb{S}$ is a $\lP^3$-bundle over $|2f|\cong\lP^2$ via the second projection
\[
	p_2:\varmathbb{S}\rightarrow|2f|.
\]
On the other hand, the image of $\varmathbb{S}$ under the first projection
\[
	p_1:\varmathbb{S}\rightarrow\lP^{D+1}
\]
is the secant variety $S(S_{1,v})$. Consider the diagonal
\[
	\Delta:=\{2l:l\in|f|\}\subset|2f|.
\]
It's easy to see that the tangent variety $T(S_{1,v})\subset S(S_{1,v})$ is the image of $p_2^{-1}(\Delta)$ via the first projection.

If $\Gamma\nsubset P_{l,l'}$ for all $l+l'$, then the curve $p_1^{-1}(\Gamma)$ is mapped to a curve in $|2f|$ which intersects $\Delta$ nontrivially. It follows that $\Gamma\cap T(S_{u,v})\neq\emptyset$.

Suppose $\Gamma\subset P_{l,l'}$ for some $l+l'$. The directrix $C_1$ is a line in $P_{l,l'}$ by hypothesis, so we have
\[
	T(S_{u,v})\cap P_{l,l'} = P\cup P'
\]
where $P$ and $P'$ are the 2-planes spanned by $C_1$ and $l$ and $l'$, respectively. So $\Gamma$ and $T(S_{u,v})$ has a nontrivial intersection in $P_{l,l'}$.
\end{proof}

\section{Construction of singular scrolls in ${\bf P}^5$}\label{sect:constr}
This section provides a construction of singular scrolls in $\lP^5$ of type $(1,v)$ with isolated singularities. The construction actually relates the existence of the singular scrolls to the solvability of a four-square equation as follows:

\begin{prop}\label{fourSq}
Assume $v\geq4$. There exists a rational scroll in $\lP^5$ of type $(1,v)$ with isolated singularities which has at least $r$ singularities if there are four odd integers $a\geq b\geq c\geq d>0$ satisfying
	\begin{enumerate}
		\item $8r+4=a^2+b^2+c^2+d^2$,
		\item $a+b+c\leq 2v-3$.
	\end{enumerate}
\end{prop}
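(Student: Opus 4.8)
The plan is to realize the desired surface as a projection of the rational normal scroll $S_{1,v}\subset\lP^{v+2}$ from a carefully chosen $(v-4)$-plane $Q$, and to convert the count of its double points into a count of self-intersections of an explicit rational curve in $\lP^3$. Since $u=1$, Lemma \ref{noMoreSec} gives $Z_2=\emptyset$, so by Corollary \ref{isoSingEq} the number of singular points of the projected scroll equals the number of secants of $S_{1,v}$ meeting $Q$ away from $T(S_{1,v})$, while Proposition \ref{noCSing} ensures these singularities are isolated as soon as $Q\cap T(S_{1,v})=\emptyset$. Writing the coordinates of $\lP^{v+2}$ as $(X_0,X_1,Y_0,\dots,Y_v)$, I place $Q$ inside the $\lP^v=\{X_0=X_1=0\}$ spanned by $C_2$, so that $Q=\{\Phi_1=\cdots=\Phi_4=0\}$ for four linear forms $\Phi_i$ on $\lP^v$, whose restrictions $\Phi_i(t):=\Phi_i(1,t,\dots,t^v)$ are polynomials of degree at most $v$ in the ruling parameter. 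Then $Q\cap S_{1,v}=Q\cap C_2$ and $Q\cap S(S_{1,v})=Q\cap S(C_2)$, so the projection is a genuine scroll of type $(1,v)$ once $Q$ misses $C_2$, and its double points correspond exactly to chords of $C_2$ meeting $Q$, i.e. to coincidences $\bar B(t)=\bar B(t')$ of the image curve $\bar B=(\Phi_1:\cdots:\Phi_4):\lP^1\to\lP^3$.

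I next translate the arithmetic hypotheses into a prescription of multiple points for $\bar B$. Put $A=(a+1)/2\ge B=(b+1)/2\ge C=(c+1)/2\ge D=(d+1)/2\ge1$; oddness of $a,b,c,d$ makes these positive integers, hypothesis (1) becomes $r=\binom A2+\binom B2+\binom C2+\binom D2$, and hypothesis (2) becomes $A+B+C\le v$. The goal is to force $\bar B$ to collapse four disjoint parameter sets $R_1,R_2,R_3,R_4\subset\lP^1$ of sizes $A,B,C,D$ onto four distinct coordinate points $P_1,\dots,P_4\in\lP^3$. A fibre $\bar B^{-1}(P_i)$ of size $A_i$ lifts to $A_i$ rulings lying in a common $2$-plane of $\lP^5$ (the preimage of $P_i$ under projection from the directrix line); two distinct coplanar lines always cross transversally, so in general position these $A_i$ rulings meet pairwise in $\binom{A_i}{2}$ distinct ordinary double points, and the four fibres together produce $\sum\binom{A_i}{2}=r$ of them.

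To realize this I choose each $\Phi_i$ to vanish on $\bigcup_{j\ne i}R_j$ (which sends $R_j$ to the $j$-th coordinate point and nothing in $R_i$ to it), and I pad with enough additional general roots so that every $\Phi_i$ has degree exactly $v$. The only binding requirement is on the form vanishing on $R_1\cup R_2\cup R_3$, a set of size $A+B+C$, and the inequality $A+B+C\le v$—precisely hypothesis (2)—is exactly what guarantees such forms exist. The local count is then immediate: for $t,t'$ in the same $R_i$, every point $\alpha C_2(t)+\beta C_2(t')$ of the chord already satisfies $\Phi_j=0$ for the three indices $j\ne i$, while the remaining equation $\alpha\Phi_i(t)+\beta\Phi_i(t')=0$ has a unique solution because $\Phi_i(t),\Phi_i(t')\ne0$. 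Hence each such chord meets $Q$ in exactly one point, giving the $r$ secants predicted above and at least $r$ singularities.

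The step I expect to be the main obstacle is checking that these are genuine \emph{isolated ordinary double points} and that nothing degenerates. Concretely, by varying the padding roots, the coordinate frame and the directrix I must simultaneously arrange that: (i) the $\Phi_i$ have no common root, so $Q\cap C_2=\emptyset$ and $\bar B$ is birational of degree $v$ onto its image; (ii) $Q\cap T(S_{1,v})=\emptyset$, so that Proposition \ref{noCSing} applies, the intersection $Q\cap S(C_2)$ is finite, and no cuspidal (pinch) singularities appear; and (iii) within each plane no three of the $A_i$ coplanar rulings are concurrent and the resulting $r$ nodes are pairwise distinct and disjoint from the other fibres. Each condition is open and nonempty, but verifying that they can be imposed all at once while preserving the prescribed incidences forced by (1) and (2)—so that the general member of the family genuinely carries at least $r$ isolated ordinary double points and no worse singularities—is the delicate heart of the argument.
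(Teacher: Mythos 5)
Your arithmetic translation ($k_i=(a_i+1)/2$, so that hypothesis (1) becomes $r=\sum\binom{k_i}{2}$ and hypothesis (2) becomes $k_1+k_2+k_3\le v$) and your choice of the four linear forms $\Phi_i$ vanishing on $\bigcup_{j\ne i}R_j$ are exactly the paper's Step 1: the $\Phi_i$ are the paper's vectors $v_i\in\bigl(\bigcap_{j\ne i}\ker P_j^B\bigr)\setminus\ker P_i^B$. But your choice of the remaining two coordinates is where the argument breaks. By placing $Q$ inside $\lP^v=\{X_0=X_1=0\}$, you force the projection to be $(X_0:X_1:\Phi_1:\cdots:\Phi_4)$, so for every $t\in R_i$ the point $C_2(t)$ maps to the single coordinate point $P_i=(0:0:\cdots:\Phi_i(t):\cdots:0)$. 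The image of the ruling $l_t$ is the line joining $\overline{C_1(t)}$ to $\overline{C_2(t)}=P_i$, so all $A_i$ rulings in the $i$-th plane chain are \emph{concurrent} at $P_i$. They meet pairwise in one and the same point, not in $\binom{A_i}{2}$ distinct nodes; the resulting surface has at most four singular points, each of multiplicity $A_i$, and for $A_i\ge 3$ these are not ordinary double points. Your condition (iii) is therefore not a genericity condition to be verified within your family --- it is violated identically on it, since the concurrency is forced by the very vanishing of the $\Phi_j$ ($j\ne i$) on $R_i$ that creates the plane chain. Equivalently: you arranged for the chords of $C_2$ joining points of $R_i$ to meet $Q$, and the intersection point of two image rulings is then the common image of the two endpoints on $C_2$, which is the same point $P_i$ for every pair in $R_i$. (For the intended application $r=8$, $v=8$ one is forced to take $(k_1,\dots,k_4)=(4,2,2,1)$, so this is not a borderline worry: you would get a point where four sheets meet instead of six nodes.)

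The paper's construction differs precisely at this step: the last two coordinates $v_5,v_6$ are taken to be \emph{general} linear forms on all of $\lP^{D+1}$, i.e.\ $Q$ is \emph{not} contained in the span of $C_2$. Then the span of the $k_i$ chosen rulings still maps onto a $2$-plane (column $i$ contributes one rank on the $C_2$-part, the general columns $v_5,v_6$ contribute two more), but the points $C_2(s_\rho)$, $\rho$ in the $i$-th group, now have distinct images $\bigl(\theta(s_\rho)\cdot v_i,\ \theta(s_\rho)\cdot v_5,\ \theta(s_\rho)\cdot v_6\bigr)$ inside that plane, so the $k_i$ coplanar image rulings are in general position and meet pairwise in $\binom{k_i}{2}$ distinct ordinary double points. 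In that setting the relevant secants meeting $Q$ are no longer chords of $C_2$ but general secants of $S_{1,v}$ lying in the $3$-planes $P_{l,l'}$, one for each pair, and Corollary \ref{isoSingEq} together with Proposition \ref{noCSing} (after the paper's separate lemma arranging $\bigcap_i v_i^\perp\cap T(S_{1,v})$ to be controlled and then choosing $v_5,v_6$ generally) gives the $r$ isolated singularities. To repair your proof you would need to abandon $Q\subset\lP^v$ and redo the incidence count for the $3$-planes $P_{l,l'}$ rather than for chords of $C_2$.
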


We use the construction to produce an explicit example which can be manipulated by a computer algebra system. With the help of a computer we prove that
\begin{prop}\label{SinX}
	There is a degree-9 rational scroll $S\subset\lP^5$ which has eight isolated singularities and smooth otherwise such that
	\begin{enumerate}
		\item\label{SinX1} $h^0(\lP^5,\I_S(3))=6$, where $\I_S$ is the ideal sheaf of $S$ in $\lP^5$.
		\item\label{SinX2} $S$ is contained in a smooth cubic fourfold $X$.
		\item\label{SinX3} $S$ deforms in $X$ to the first order as a two dimensional family.
		\item\label{SinX4} $S$ is also contained in a singular cubic fourfold $Y$. 
	\end{enumerate}
\end{prop}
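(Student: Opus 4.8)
The plan is to produce $S$ as a completely explicit example via Proposition \ref{fourSq} and then to verify the four assertions by direct computation. Since a degree-$9$ scroll of type $(1,v)$ has $v=8$, and since obtaining $r=8$ singularities requires $8r+4=68$, I would first record the four-square solution
\[
	68 = 7^2+3^2+3^2+1^2, \qquad 7+3+3 = 13 = 2\cdot 8 - 3
\]
(the alternative $68 = 5^2+5^2+3^2+3^2$ serves equally well), which satisfies both hypotheses of Proposition \ref{fourSq} with $v=8$. This yields an explicit projection center $Q\cong\lP^4\subset\lP^{10}$ disjoint from $S_{1,8}$ with $Q\cap T(S_{1,8})=\emptyset$, so by Proposition \ref{noCSing} the projected scroll $S=\Lambda_Q(S_{1,8})\subset\lP^5$ has isolated singularities; since $u=1$ forces $Z_2=\emptyset$ by Lemma \ref{noMoreSec}, Corollary \ref{isoSingEq} then shows the number of double points equals $\#(Q\cap S(S_{1,8}))\geq 8$.

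Next I would pass the defining ideal $\I_S$ to a computer algebra system. Because Proposition \ref{fourSq} only guarantees \emph{at least} $8$ singularities, the first computational task is to confirm that the singular locus of $S$ consists of exactly $8$ reduced points and that at each one the two sheets of $S$ cross transversally, i.e. each is a non-normal ordinary double point and $S$ is smooth elsewhere; this is read off from the local structure of $\I_S$ (equivalently, from the Jacobian ideal) at each point. The same ideal immediately gives assertion (\ref{SinX1}): the degree-$3$ graded piece $(\I_S)_3$ is computed by linear algebra, and I would confirm $\dim(\I_S)_3=6$. For assertion (\ref{SinX2}) I would choose a general member $X$ of the $6$-dimensional system $|\I_S(3)|$ and verify smoothness by checking that its Jacobian ideal is irrelevant; the point to watch is that the eight double points of $S$, where $S$ itself is singular, nonetheless lie on a smooth $X$, so the relevant check is local at those eight points, and exhibiting one such $X$ suffices. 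Assertion (\ref{SinX4}) is the complementary statement: I would simply exhibit one singular cubic $Y$ in the system $|\I_S(3)|$, found either by solving for a member whose Jacobian ideal has a common zero or by specializing coordinates so that a node appears, thereby showing $|\I_S(3)|$ meets the discriminant.

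The main obstacle is assertion (\ref{SinX3}), that $S$ deforms inside $X$ to first order in a two-dimensional family, since this is an \emph{exact} dimension count on a singular surface rather than a mere existence statement. I would compute the tangent space to the Hilbert scheme of $X$ at $[S]$, namely ${\rm Hom}_{\fO_X}(\I_{S/X}/\I_{S/X}^2,\fO_S)=H^0(S,N_{S/X})$, by way of the normal-sheaf sequence
\[
	0\rightarrow N_{S/X}\rightarrow N_{S/\lP^5}\rightarrow\fO_S(3)\rightarrow0,
\]
handling the non-normal double points by working with the full scheme structure of $\overline{S}$ rather than passing to the smooth model $\F_7$. Verifying that this space has dimension exactly $2$ is the delicate step, both because the singularities obstruct a naive normal-bundle computation (Lemma \ref{h0h1} applies only to smooth embeddings, so its output must be corrected by the contribution of the eight nodes) and because the value $2$ is precisely what matches the expected parametrization of such scrolls by the degree-$42$ K3 surface $\Sigma$; it is therefore the quantitative input that will later drive the dominance of $p_2$.
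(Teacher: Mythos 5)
Your outline coincides with the paper's strategy for the existence of $S$ and for items (\ref{SinX1}), (\ref{SinX2}), (\ref{SinX4}): the paper likewise instantiates the construction of Section \ref{sect:constr} for $S_{1,8}\subset\lP^{10}$ (your solution $68=7^2+3^2+3^2+1^2$ with $7+3+3=13=2v-3$ is exactly the admissible configuration $(k_1,\dots,k_4)=(4,2,2,1)$), writes down an explicit $6\times 11$ projection matrix $\Lambda$, and verifies by computer that the singular locus is exactly eight points (the check is done over $\F_{31}$ and promoted to characteristic zero via the double point formula, rather than by your direct Jacobian analysis over $\Q$ --- either is acceptable), that the ideal contains exactly six cubics, and that the pencil of cubics through $S$ contains both a smooth and a singular member.

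The genuine divergence is in item (\ref{SinX3}), and your route there has a soft spot. You propose to compute the Hilbert-scheme tangent space ${\rm Hom}(\I_{S/X}/\I_{S/X}^2,\fO_S)$ via the sequence $0\rightarrow N_{S/X}\rightarrow N_{S/\lP^5}\rightarrow\fO_S(3)\rightarrow0$. Two problems: first, right-exactness of that sequence is not automatic at the eight non-normal points, so you would in any case have to compute the ${\rm Hom}$ module directly. Second, and more importantly, what Theorem \ref{dom} actually consumes is a bound on the fiber of $p_2:\mathcal{Z}\rightarrow U_{42}$ over $X$, i.e.\ on deformations of $S$ inside $X$ \emph{as a scroll} (as a point of $\mathcal{H}_9^8$). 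The full Hilbert-scheme tangent space at the non-reduced scheme $\overline{S}$ can strictly exceed this --- the embedded points at the nodes contribute their own first-order deformations --- so if your computation returns a number larger than $2$ the argument becomes inconclusive, and you would need an additional step cutting back to the scroll locus. The paper sidesteps this by working on the Fano variety of lines: $F_1(S)$ is a parametrized rational curve $R=Q\cdot\Lambda$ in $\lG(1,5)$, a first-order perturbation $R+dR$ depends on $66$ coefficients, substitution into the four equations cutting out $F_1(X)$ yields a linear system of rank $53$, and after removing the $4+3+4$ reparametrization and rescaling degrees of freedom one gets exactly $66-53-11=2$. This computes precisely the deformations of $S$ as a ruled surface in $X$, which is the quantity the dominance argument needs; if you pursue your normal-sheaf route you must either verify that the answer is still $2$ or explain why the excess (if any) is irrelevant.
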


We introduce the construction first and prove Proposition \ref{SinX} in the end. Recall that, with a fixed rational normal scroll $S_{1,v}\subset\lP^{D+1}$, every degree $D$ scroll $S\subset\lP^5$ of type $(1,v)$ is the projection of $S_{1,v}$ from $P^\perp$ for some $P\in\lG(5,D+1)$.

\subsection{Plane $k$-chains}
Let $k$ be a positive integer. It can be proved by induction that $k$ distinct lines in a projective space intersect in at most ${k\choose2}$ points counted with multiplicity, and the maximal number is attained exactly when the $k$ lines span a 2-plane.

\begin{defn}
	Let $k\geq1$ be an integer. We call the union of $k$ distinct lines which span a 2-plane a plane $k$-chain. Let $W\subset\lP^N$ be the union of a finite number of lines. A plane $k$-chain in $W$ is called maximal if it is not a subset of a plane $k'$-chain in $W$ for some $k'>k$.
\end{defn}

Let $S\subset\lP^5$ be a singular scroll with isolated singularities. There's a subset $W\subset S$ consisting of a finite number of rulings defined by
\begin{equation}\label{rulConfig}
	W = \bigcup l:\mbox{$l$ is a ruling passing through a singular point on $S$.}
\end{equation}
By Zorn's lemma, $W$ can be expressed as
\[
	W = \bigcup_{i=1}^n K_i: \mbox{$K_i$ is a maximal plane $k_i$-chain with $k_i\geq2$.}
\]

If two plane $k$-chains share more than one line, then they must lie on the same 2-plane. In particular, both of them can not be maximal. Therefore, for distinct maximal plane $k$-chains $K_i$ and $K_j$ in $W$, we have either $K_i\cap K_j=\emptyset$ or $K_i\cap K_j=l$ a single ruling. It follows that the number of singularities on $S$ equals $\sum_{i=1}^n {k_i\choose2}$ since a plane $k$-chain contributes ${k\choose2}$ singularities.

Let $l_1,...,l_k\subset S_{1,v}$ be $k$ distinct rulings which span a subspace $P_{l_1,...,l_k}\subset\lP^{D+1}$. The images of the rulings form a plane $k$-chain on $S$ through projection if and only if $P_{l_1,...,l_k}$ is projected onto a 2-plane in $\lP^5$. Parametrize the rulings as in (\ref{paraRul}) with $l_j=l_j(s_j)$, $j=1,...,k$. Then $P_{l_1,...,l_k}$ is spanned by the row vectors of the following $(k+2)\times(D+2)$ matrix
\begin{equation}\label{P1tok}
	P(s_1,...,s_k)=
	\left(\begin{array}{cccccc}
		1&s_1&0&0&...&0\\
		1&s_2&0&0&...&0\\
		0&0&1&s_1&...&s_1^v\\
		&&\vdots&&&\\
		0&0&1&s_k&...&s_k^v
	\end{array}\right).
\end{equation}

The projection $S_{1,v}\rightarrow S$ is restricted from a linear map
\[
	\Lambda: \lP^{D+1}\dashrightarrow\lP^5.
\]
Suppose $\Lambda$ is represented by a $(D+2)\times6$ matrix
\[
	\Lambda = 
	\left(\begin{array}{cccccc} v_1&v_2&v_3&v_4&v_5&v_6\end{array}\right),
\]
where $v_1,...,v_6$ are vectors in $\lP^{D+1}$. Then $P_{l_1,...,l_k}$ is projected onto a 2-plane if and only if the $(k+2)\times6$ matrix
\[
	P(s_1,...,s_k)\cdot\Lambda
\]
has rank three.

\subsection{Control the number of singularities}
Let $r$ be a non-negative integer. We introduce a method to find a projection $\Lambda$ which maps $S_{1,v}$ to a singular scroll $S$ with isolated singularities. The method allows us to control the number of singularities such that it is bounded below by $r$. For simplicity, we consider only the cases when the configuration $W\subset S$ defined in (\ref{rulConfig}) consists of four disjoint maximal plane $k$-chains.

We start by picking distinct rulings on $S_{u,v}$ and produce four matrices $P_1$, $P_2$, $P_3$, and $P_4$ as in (\ref{P1tok}). Suppose $P_i$ consists of $k_i$ rulings. Note that $P_i$ contribute ${k_i\choose 2}$ singularities if its rulings are mapped to a plane $k_i$-chain. Thus we also assume that
\begin{equation}\label{singNum}
	r={k_1\choose2}+{k_2\choose2}+{k_3\choose2}+{k_4\choose2}.
\end{equation}
Here we allow $k_i=1$ which means that $P_i$ consists of a single ruling and thus contributes no singularity.

Consider $\Lambda = \left(\begin{array}{cccccc} v_1&v_2&v_3&v_4&v_5&v_6\end{array}\right)$ as an unknown. Let $P$ be the 5-plane spanned by $v_1,...,v_6$. We are going to construct $\Lambda$ satisfying
\begin{enumerate}
	\item\label{ld1} ${\rm rk}\left(P_i\cdot\Lambda\right)=3,\quad i=1,2,3,4$.
	\item\label{ld2} $P^\perp\,\cap\,T(S_{1,v})=\emptyset$.
\end{enumerate}
Note that (\ref{ld1}) makes the number of isolated singularities $\geq r$, while (\ref{ld2}) confirms that no curve singularity occurs. We divide the construction into two steps:

\vspace{1.5mm}
\noindent\emph{Step 1. Find $v_1$, $v_2$, $v_3$ and $v_4$ to satisfy (\ref{ld1}).}

Consider each $P_i$ as a linear map by multiplication from the left. We are trying to find independent vectors $v_1$, $v_2$, $v_3$ and $v_4$ such that for each $P_i$ three of them are in the kernel while the remaining one isn't. The four vectors arranged in this way contribute exactly one rank to each $P_i\cdot\Lambda$. In the next step, $v_5$ and $v_6$ will be general vectors in $\lP^{D+1}$ satisfying some open conditions. This contributes two additional ranks to each $P_i\cdot\Lambda$, which makes ($\ref{ld1}$) true.

Under the standard parametrization for $S_{1,v}\subset\lP^{D+1}$, the underlying vector space of $\lP^{D+1}$ can be decomposed as $A\oplus B$ with $A$ representing the first 2 coordinates and $B$ representing the last $v+1$ coordinates. With this decomposition, the matrix $P$ in (\ref{P1tok}) can be decomposed into two Vandermonde matrices
\[
	P^A=
	\left(\begin{array}{cc}
		1&s_1\\
		1&s_2
	\end{array}\right)
	\quad\mbox{and}\quad
	P^B=
	\left(\begin{array}{cccc}
		1&s_1&...&s_1^v\\
		&\vdots&&\\
		1&s_k&...&s_k^v
	\end{array}\right).
\]
Note that $\ker P=\ker P^B$. So we can search for the vectors from $\ker P^B$.

In our situation, we have four matrices ${P_1}^B$, ${P_2}^B$, ${P_3}^B$, ${P_4}^B$ which have four kernels
$\ker{P_1}^B$, $\ker{P_2}^B$, $\ker{P_3}^B$, and $\ker{P_4}^B$, respectively. By the assumption $k_i\leq v$ and the fact that a Vandermonde matrix has full rank, each $\ker {P_i}^B$ is a codimension $k_i$ subspace of $B$.

Now we want to pick $v_1,...,v_4$ from $B$ such that each $\ker{P_i}^B$ contains exactly three of the four vectors, i.e. we want
\begin{equation}\label{vec3-1}
	\left|\,\ker{P_i}^B\cap\{v_1,v_2,v_3,v_4\}\,\right|=3,\quad\mbox{for}\enspace i=1,2,3,4.
\end{equation}
One way to satisfy (\ref{vec3-1}) is to pick $v_i$ from
\begin{equation}\label{ker3-1}
	\left(\bigcap_{j\neq i}\ker{P_j}^B\right)-\ker{P_i}^B,\quad\mbox{for}\enspace i=1,2,3,4.
\end{equation}
The sets in (\ref{ker3-1}) are nonempty if and only if
\[
	\dim\left(\ker{P_\alpha}^B\cap\ker{P_\beta}^B\cap\ker{P_\gamma}^B\right)\geq1
\]
for all distinct $\alpha,\beta,\gamma\in\{1,2,3,4\}$. This is equivalent to 
\begin{equation}\label{4v}
	k_\alpha+k_\beta+k_\gamma\leq v,\enspace\mbox{for distinct}\;\alpha,\beta,\gamma\in\{1,2,3,4\}.
\end{equation}
So we have to include (\ref{4v}) as one of our assumptions.

\vspace{1.5mm}
\noindent\emph{Step 2. Adjust $v_1,...,v_4$ and then pick $v_5$ and $v_6$ to satisfy (\ref{ld2}).}

\begin{lemma}
	Let ${v_i}^\perp$ be the hyperplane in $\lP^{D+1}$ orthogonal to $v_i$. The four vectors $v_1,...,v_4$ can be chosen generally such that $\bigcap_{i=1}^4{v_i}^\perp$ intersects $T(S_{1,v})$ only in the directrix of $S_{1,v}$.
\end{lemma}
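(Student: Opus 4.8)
The plan is to reduce the asserted incidence, which a priori concerns the $4$-dimensional variety $T(S_{1,v})$, to the single transversality statement that a codimension-$4$ linear space avoids the tangent developable of the degree-$v$ rational normal curve $C_2$, and then to establish that by a dimension count carried out inside the loci fixed in Step 1. The starting observation is that each $v_i$ was chosen from $B$, so under the standard inner product $A\perp B$ gives $A\subseteq v_i^\perp$ for every $i$. Hence, writing $L_B:=B\cap\bigcap_{i=1}^4 v_i^\perp$ (a codimension-$4$ subspace of $B$ once the $v_i$ are independent, which is generic), we get
\[
	\bigcap_{i=1}^4 v_i^\perp = \lP(A\oplus L_B).
\]
In particular this linear space always contains the directrix $C_1=\lP(A)\subset T(S_{1,v})$, so the real content of the lemma is the absence of any further intersection point.

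Next I would compute $T(S_{1,v})$ explicitly. Differentiating the ruling parametrization (\ref{paraRul}) along the diagonal $\Delta\subset|2f|$ singled out in the proof of Proposition \ref{noCSing}, the tangent $3$-space along the ruling $l(s)$ has affine cone $A\oplus\langle b(s),b'(s)\rangle$, where $b(s)=(1,s,\dots,s^v)$ parametrizes $C_2$, $b'(s)$ is its derivative, and $\langle b(s),b'(s)\rangle\subset B$ is the tangent line to $C_2$ at $b(s)$. Thus $T(S_{1,v})$ is the linear join of $\lP(A)$ with the tangent developable $T_B:=\bigcup_s\langle b(s),b'(s)\rangle\subset\lP(B)$, which is a surface. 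Writing a point of this join as $[\alpha+\beta]$ with $\alpha\in A$ and $\beta$ in the affine cone over $T_B$, containment in $\lP(A\oplus L_B)$ forces $\beta\in L_B$; therefore
\[
	\Bigl(\bigcap_{i=1}^4 v_i^\perp\Bigr)\cap T(S_{1,v})=C_1
	\quad\Longleftrightarrow\quad
	\lP(L_B)\cap T_B=\emptyset.
\]

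It remains to choose the $v_i$ so that $\lP(L_B)\cap T_B=\emptyset$, and this is the only substantial point. Since $\dim T_B=2$ and $\lP(L_B)$ has codimension $4$ in $\lP(B)=\lP^v$, the expected dimension of the intersection is $-2$, so an unconstrained general codimension-$4$ space misses $T_B$; the issue is whether a general choice already inside the Step 1 loci does. To see this I would form the incidence variety $I=\{(\beta,v_1,\dots,v_4):\beta\in T_B,\ v_i\in\beta^\perp\cap U_i\}$, with $U_i=\bigcap_{j\neq i}\ker P_j^B$ the linear space appearing in (\ref{ker3-1}), and bound the dimension of its image under projection to $\prod_i U_i$, i.e.\ the locus of bad quadruples. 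Over a generic $\beta$ each condition $v_i\in\beta^\perp\cap U_i$ is codimension $1$, giving a fiber of codimension $4$ and hence a proper image. The main obstacle is exactly the degenerate strata where several $U_i$ happen to lie in a single $\beta^\perp$: these are governed by $T_B\cap\lP\bigl((\textstyle\sum_{i\in S}U_i)^\perp\bigr)$, and one must verify, using that $C_2$ is nondegenerate so $T_B$ spans $\lP(B)$ together with the general position of the ruling configuration from Step 1, that each such stratum is low-dimensional enough to keep the bad locus a proper subvariety. Granting this, a general admissible $(v_1,\dots,v_4)$ satisfies $\lP(L_B)\cap T_B=\emptyset$, which by the displayed equivalence proves the lemma.
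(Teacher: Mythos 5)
Your reduction is sound and is essentially the one in the paper. Since each $v_i$ lies in $B$, the linear space $\bigcap_{i=1}^4 v_i^\perp$ is $\lP(A\oplus L_B)$ with $L_B=B\cap\bigcap_i v_i^\perp$, and $T(S_{1,v})$ is indeed the join of the directrix $\lP(A)$ with the tangent developable $T_B$ of $C_2\subset\lP(B)$; splitting a point as $\alpha+\beta$ with $\alpha\in A$, $\beta\in B$ then converts the lemma into the single statement $\lP(L_B)\cap T_B=\emptyset$. The paper arrives at exactly this statement by parametrizing $T(S_{1,v})$ and observing that the $2\times4$ matrix with rows $\theta(s)\cdot v_i$ and $\theta'(s)\cdot v_i$ drops rank precisely when $\bigcap_i v_i^\perp$ meets $T(C_2)$; your join description is an equivalent, slightly more geometric route to the same reduction.

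The gap is in your third step, and the strata you defer with ``granting this'' are not a side issue but the entire difficulty. In the numerical range the construction actually needs --- equality in (\ref{4v}), e.g. $v=8$ with $(k_1,k_2,k_3,k_4)=(3,3,2,2)$ for the discriminant-$42$ application --- the space $U_i=\bigcap_{j\neq i}\ker P_j^B$ has vector-space dimension exactly $1$ for some $i$, so $\lP(U_i)$ is a single point and ``choosing $v_i$ generally in $U_i$'' gives no freedom at all for those indices; the assertion that $v_i\in\beta^\perp\cap U_i$ is a codimension-one condition is then vacuous, and the bad locus inside $\prod_i\lP(U_i)$ cannot be shown proper by genericity of the $v_i$ alone (a priori it could be everything, since some factors are points). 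The freedom that actually rescues the argument is the position of the rulings $s_1,\dots,s_k$ defining the matrices $P_i$, and one must prove that perturbing them moves the forced hyperplanes $v_i^\perp$ so as to leave no common invariant subspace meeting $T_B$. This is what the paper's proof does: it first chooses $v_2,v_3,v_4$ so that $\bigcap_{i=2}^4 v_i^\perp$ misses $C_2$ and hence meets $T_B$ in only finitely many points, and then perturbs the rulings so that $v_1^\perp$ avoids those points, using that the hyperplane orthogonal to a moving point of a rational normal curve has no invariant subspace. You name ``the general position of the ruling configuration'' as an ingredient but never use it; without carrying out that verification the proof is incomplete precisely at its load-bearing step.
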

\begin{proof}
Parametrize the rational normal curve $C=S_{1,v}\cap\lP(B)$ by
\[
	\theta(s)=(0,0,1,s,...,s^v).
\]
Then the standard parametrization (\ref{stdRNS}) can be written as
\[
	(1,s,0,...,0)+t\theta(s).
\]
Let $a$ and $b$ be the parameters for the tangent plane over each point. Then the tangent variety $T(S_{1,v})$ has the parametric equation
\[\begin{array}{l}
	(1,s,0,...,0)+t\theta(s)+a\left[(0,1,0,...,0)+t\frac{d\theta}{ds}(s)\right]+b\theta(s)\\
	= (1,s+a,0,...,0)+(t+b)\theta(s)+ta\frac{d\theta}{ds}(s).
\end{array}\]

Each point on $T(S_{1,v})$ lying in $\bigcap_{i=1}^4{v_i}^\perp$ is a common zero of the equations
\begin{equation}\label{viIntTan}
	(t+b)\left(\theta(s)\cdot v_i\right)+ta\left(\frac{d\theta}{ds}(s)\cdot v_i\right)=0,\quad i=1,2,3,4.
\end{equation}
By considering $(t+b)$ and $ta$ as variables, (\ref{viIntTan}) becomes a system of linear equations given by the matrix
\[\left(\begin{array}{cccc}
	\theta(s)\cdot v_1&\theta(s)\cdot v_2&\theta(s)\cdot v_3&\theta(s)\cdot v_4\\
	\theta'(s)\cdot v_1&\theta'(s)\cdot v_2&\theta'(s)\cdot v_3&\theta'(s)\cdot v_4
\end{array}\right).\]
The matrix fails to be of full rank exactly when $s$ admits the existence of $\alpha,\beta\in\C$, $\alpha\beta\neq0$, such that
\begin{equation}\label{tanOfCurve}
	\left(\alpha\theta(s)+\beta\theta'(s)\right)\cdot v_i=0,\quad i=1,2,3,4.
\end{equation}
Note that (\ref{tanOfCurve}) has a solution if and only if $\bigcap_{i=1}^4{v_i}^\perp$ and the tangent variety $T(C)$ of $C$ intersect each other.

One can choose $v_2,v_3$ and $v_4$ in general from (\ref{ker3-1}) so that $\bigcap_{i=2}^4{v_i}^\perp$ is disjoint from $C$. This forces $\bigcap_{i=2}^4{v_i}^\perp$ to intersect $T(C)$ in either empty set or points. By the properties of a rational normal curve, the hyperplane orthogonal to a point on $C$ contains no invariant subspace when one perturb the point. Hence, after necessary perturbation of the chosen rulings, one can choose $v_1$ from (\ref{ker3-1}) such that $\left(\bigcap_{i=1}^4{v_i}^\perp\right)\cap T(C)=\emptyset$. As a result, the equations in (\ref{viIntTan}) become independent, so the solutions are $t=b=0$ or $a=0$, $t=-b$. Both solutions form the directrix of $S_{1,v}$.
\end{proof}

With the above adjustment, we can pick $v_5$ and $v_6$ in general in $\lP^{D+1}$ so that the $(D-5)$-plane $Q = {v_1}^\perp\cap...\cap{v_6}^\perp$ has no intersection with $T(S_{1,v})$. Note that the projection defined by $\Lambda$ is the same as the projection from $Q$. By Proposition \ref{noCSing}, this projection produces a rational scroll with isolated singularities.

\begin{prop}
	There exists a rational scroll in $\lP^5$ of type $(1,v)$ with isolated singularities which has at least $r$ singularities if there are four positive integers $k_1\geq k_2\geq k_3\geq k_4$ satisfying (\ref{singNum}) and (\ref{4v}):
\[
	r={k_1\choose 2}+{k_2\choose 2}+{k_2\choose 2}+{k_2\choose 2}
	\quad\mbox{and}\quad
	k_1+k_2+k_3\leq v.
\]
\end{prop}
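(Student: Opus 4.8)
The plan is to package the two-step construction carried out just above into the asserted existence statement. I would begin by fixing four pairwise disjoint collections of rulings on $S_{1,v}$, the $i$-th consisting of $k_i$ distinct rulings; since $S_{1,v}$ carries the one-parameter family of rulings (\ref{paraRul}) and $k_1+k_2+k_3+k_4$ is finite, such a choice exists. Parametrizing the rulings in the $i$-th group, I assemble the matrix $P_i$ of shape (\ref{P1tok}). With this notation the hypothesis $r={k_1\choose2}+{k_2\choose2}+{k_3\choose2}+{k_4\choose2}$ is exactly (\ref{singNum}), and $k_1+k_2+k_3\leq v$ is exactly (\ref{4v}); note also that (\ref{4v}) forces each $k_i\leq v$, so every $\ker P_i^B$ has codimension $k_i$ in $B$.

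I would then run Step 1 to produce the columns $v_1,\dots,v_4$ of the projection $\Lambda$. The inequality (\ref{4v}) guarantees that each triple intersection $\ker P_\alpha^B\cap\ker P_\beta^B\cap\ker P_\gamma^B$ has dimension at least one, so the sets (\ref{ker3-1}) are nonempty, and choosing $v_i$ from the $i$-th of them yields (\ref{vec3-1}): for every $i$ exactly one of $P_iv_1,\dots,P_iv_4$ is nonzero. Next I would apply the lemma of Step 2 to perturb $v_1,\dots,v_4$ (and, if necessary, the chosen rulings) inside the open sets (\ref{ker3-1}) so that $\bigcap_{i=1}^4{v_i}^\perp$ meets $T(S_{1,v})$ only along the directrix, and then pick $v_5,v_6\in\lP^{D+1}$ generically. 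A general such pair simultaneously (a) supplies two further independent columns to each $P_i\cdot\Lambda$, raising ${\rm rk}(P_i\cdot\Lambda)$ from $1$ to $3$, and (b) trims $\bigcap_{i=1}^4{v_i}^\perp\cap T(S_{1,v})$ to the empty set, so that $Q={v_1}^\perp\cap\cdots\cap{v_6}^\perp$ is disjoint from $T(S_{1,v})$. These requirements are finitely many dense open conditions and are therefore jointly met.

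Writing $S\subset\lP^5$ for the image of $S_{1,v}$ under $\Lambda$, the equality ${\rm rk}(P_i\cdot\Lambda)=3$ sends the span $P_{l_1,\dots,l_{k_i}}$ of the $i$-th group of rulings onto a $2$-plane, so that group maps to a plane $k_i$-chain. Since $u=1$ and $Q\cap T(S_{1,v})=\emptyset$, Proposition \ref{noCSing} shows that $S$ has isolated singularities. The four groups being disjoint and lying in four distinct $2$-planes, the singularity count recorded above for a union of disjoint maximal plane chains gives $\sum_{i=1}^4{k_i\choose2}=r$ singularities; any further rulings that happen to be mapped together only enlarge this count, so $S$ carries at least $r$ singular points.

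The step I expect to be the main obstacle is verifying the joint genericity at the end: the open condition on $(v_5,v_6)$ forcing every ${\rm rk}(P_i\cdot\Lambda)=3$ must be compatible with the open condition $Q\cap T(S_{1,v})=\emptyset$, and the perturbation demanded by the Step 2 lemma must not spoil the kernel memberships (\ref{vec3-1}) secured in Step 1. Because (\ref{ker3-1}) are Zariski-open in linear spaces, small perturbations of the $v_i$ preserve (\ref{vec3-1}), so all of these conditions cut out dense opens in the relevant parameter space and a general point satisfies them at once.
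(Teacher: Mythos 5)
Your proposal is correct and follows essentially the same route as the paper: the proposition is proved there precisely by the two-step construction you reproduce (choosing $v_1,\dots,v_4$ from the kernel intersections (\ref{ker3-1}) made nonempty by (\ref{4v}), perturbing via the lemma so that $\bigcap_{i=1}^4{v_i}^\perp$ meets $T(S_{1,v})$ only in the directrix, taking $v_5,v_6$ general, and invoking Proposition \ref{noCSing} together with the plane $k$-chain count). Your closing remark that merging of chains can only increase the singularity count is a correct and welcome justification of the ``at least $r$'' phrasing, which the paper leaves implicit.
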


Proposition \ref{fourSq} is obtained by expanding the binomial coefficients followed by a change of variables.

\subsection{Proof of Proposition \ref{SinX}}
In the following we exhibit an explicit example which can be manipulated by a computer algebra system over characteristic zero. The main program used in our work is {\sc Singular} \cite{DGPS}.

Consider $\lP^{10}$ with homogeneous coordinate ${\bf x}=(x_0,...,x_{10})$. We define the rational normal scroll $S_{1,8}$ by the $2\times2$ minors of the matrix
\[\left(
	\begin{array}{ccccccccc}
	x_0&x_2&x_3&x_4&x_5&x_6&x_7&x_8&x_9\\
	x_1&x_3&x_4&x_5&x_6&x_7&x_8&x_9&x_{10}
	\end{array}
\right).\]

In order to project $S_{1,8}$ onto a rational scroll whose singular locus is zero dimensional and consists of at least eight singular points, we use the method introduced previously to construct a projection
\[
\Lambda =
\arraycolsep=2.5pt
\left(\begin{array}{c} v_1\\v_2\\v_3\\v_4\\v_5\\v_6\end{array}\right)^T=
\left(\begin{array}{rrrrrrrrrrr}
	0&0&0&120&-34&-203&91&70&-56&13&-1\\
	0&0&2880&5184&-2372&-2196&633&261&-63&-9&2\\
	0&0&0&480&304&-510&-339&30&36&0&-1\\
	0&0&0&144&36&-196&-49&56&14&-4&-1\\
	1&0&1&1&1&1&1&1&1&1&1\\
	0&1&1&0&0&0&0&0&0&0&0
\end{array}\right)^T
\]
Let ${\bf z}=(z_0,...,z_5)$ be the coordinate for $\lP^5$. Then the projection $\lP^{10}\dashrightarrow\lP^5$ defined by $\Lambda$ can be explicite written by
\[
	{\bf z} = {\bf x}\cdot\Lambda.
\]

Let $S$ be the image of $S_{1,8}$ under the projection. Due to the limit of the author's computer, we check that $S$ has eight singularities and smooth otherwise over the finite field of order 31. On the other hand, the double point formula implies that $S$ has eight double points if the singular locus is isolated. Hence the singularity of $S$ consists of eight double points over characteristic zero as required.

The generators of the ideal of $S$ contain six cubics, so property (\ref{SinX1}) is confirmed.  Properties (\ref{SinX2}) and (\ref{SinX4}) can be easily checked by examining the linear combinations of those cubics.

The final step is to varify property (\ref{SinX3}). Let $X\subset\lP^5$ be a smooth cubic containing $S$. Let $F_1(S)$ and $F_1(X)$ denote the Fano variety of lines on $S$ and $X$, respectively. Then it is equivalent to show that $F_1(S)$ deforms in $F_1(X)$ to the first order with dimension two.

Let $\lG(1,5)$ be the grassmannian of lines in $\lP^5$. Every element ${\bf b}\in\lG(1,5)$ is parametrized by a $2\times6$ matrix
\begin{equation}\label{coorGrass}
	\left(\begin{array}{c}
	{\bf b}_1\\{\bf b}_2
	\end{array}\right)
	= \left(\begin{array}{cccccc}
	b_{10}&b_{11}&b_{12}&b_{13}&b_{14}&b_{15}\\
	b_{20}&b_{21}&b_{22}&b_{23}&b_{24}&b_{25}
	\end{array}\right)
\end{equation}
where ${\bf b}_1$ and ${\bf b}_2$ are two vectors which span the line ${\bf b}$.

Let $P_X=P_X({\bf z})$ be the homogeneous polynomial defining $X$. Let $V$ be the 6-dimensional linear space underlying $\lP^5$. Consider $P_X$ as a symmetric function defined on $V\oplus V\oplus V$. Then $F_1(X)\subset\lG(1,5)$ is cut out by the four equations
\begin{equation}\label{FanoX}
	P_X({\bf b}_1,{\bf b}_1,{\bf b}_1),\;
	P_X({\bf b}_1,{\bf b}_1,{\bf b}_2),\;
	P_X({\bf b}_1,{\bf b}_2,{\bf b}_2),\;
	P_X({\bf b}_2,{\bf b}_2,{\bf b}_2).
\end{equation}

Consider the Fano variety of lines on $S_{1,8}$ as a rational curve $\lP^1\subset\lG(1,10)$ parametrized by
\[Q=\left(
\begin{array}{ccccccccccc}
	r&s&0&0&0&0&0&0&0&0&0\\
	0&0&r^8&r^7s&r^6s^2&r^5s^3&r^4s^4&r^3s^5&r^2s^6&rs^7&s^8
\end{array}
\right)\]
where $(r,s)$ is the homogeneous coordinate for $\lP^1$. Then $F_1(S)\subset\lG(1,5)$ is defined by the parametric equation
\[
	R=Q\cdot\Lambda.
\]

Now consider a $2\times6$ matrix $dR$ whose first row consists of arbitrary linear forms on $\lP^1$ while the second row consists of arbitrary 8-forms. The coefficients of those forms introduce $2\cdot6+9\cdot6=66$ variables $c_1,...,c_{66}$. Then an abstract first order deformation of $F_1(S)$ in $\lG(1,5)$ is given by
\[
	R+dR.
\]

Inserting $R+dR$ into (\ref{FanoX}) gives us four polynomials in $r$ and $s$ with coefficients in $c_1,...,c_{66}$. The linear parts of the coefficients form a system of linear equations in $c_1,...,c_{66}$ whose associated matrix has rank 53. Then the first order deformation of $F_1(S)$ in $F_1(X)$ appears as solutions of the system.

In addition to the 53 constraints contributed by the above linear equations, we also have
\begin{itemize}
\item 4 constraints from the ${\rm GL}(2)$ action on the coordinates (\ref{coorGrass}).
\item 3 constraints from the automorphism group of $\lP^1$.
\item 4 constraints from rescaling the four equations (\ref{FanoX}).
\end{itemize}
So $F_1(S)$ deforms in $F_1(X)$ to the first order with dimension $66-53-4-3-4=2$.

\section{Special cubic fourfolds of discriminant 42}\label{sect:C42}
This section proves that a generic special cubic fourfold $X\in\mathcal{C}_{42}$ has a unirational parametrization of odd degree and also that $\mathcal{C}_{42}$ is uniruled. We also provide a discussion in the end talking about the difficulty of generalizing our method to higher discriminants.

\subsection{The space of singular scrolls}
The Zariski closure of the locus for degree-9 scrolls forms a component $\mathcal{H}_9$ in the associated Hilbert scheme. Let $\mathcal{H}_9^8\subset\mathcal{H}_9$ be the closure of the locus parametrizing scrolls with 8 isolated singularities. By Propositions \ref{fourSq} and Theorem \ref{singCodim} we have the estimate:
\begin{cor}
	$\mathcal{H}_9^8$ has codimension at most 8 in $\mathcal{H}_9$.
\end{cor}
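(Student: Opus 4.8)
The plan is to read the corollary as the conjunction of two statements about the component $\mathcal{H}_9$: a non-emptiness assertion for $\mathcal{H}_9^8$, furnished by the construction of Proposition \ref{fourSq}, and an upper bound on the codimension of the singular locus, furnished by Theorem \ref{singCodim}. As a reference point for the codimension I would first record the dimension of the ambient component: a smooth degree-$9$ scroll in $\lP^5$ is of type $(1,8)$, so by Lemma \ref{h0h1} it satisfies $h^0(S,N_{S/\lP^5}) = 6\cdot 11 - 7 = 59$ with higher cohomology vanishing. Hence $\mathcal{H}_9$ is smooth of dimension $59$ at its general point, and all codimensions are measured against this number.

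First I would verify non-emptiness from Proposition \ref{fourSq}. A scroll of type $(1,v)$ has degree $D = 1+v$, so degree $9$ forces $v = 8 \geq 4$. Taking $r = 8$, I must exhibit four odd integers $a \geq b \geq c \geq d > 0$ with $8r+4 = 68 = a^2+b^2+c^2+d^2$ and $a+b+c \leq 2v-3 = 13$. The choice $(a,b,c,d) = (7,3,3,1)$ works, since $49+9+9+1 = 68$ and $7+3+3 = 13$. Under the substitution $a = 2k_1-1,\dots$ this is the plane-chain configuration $(k_1,k_2,k_3,k_4) = (4,2,2,1)$, which contributes $\binom{4}{2}+\binom{2}{2}+\binom{2}{2} = 8$ double points. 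Proposition \ref{fourSq} then produces a degree-$9$ scroll with isolated singularities and at least $8$ double points, and the explicit example of Proposition \ref{SinX}, which carries exactly eight double points by the double-point formula, confirms $\mathcal{H}_9^8 \neq \emptyset$.

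Next I would invoke Theorem \ref{singCodim} for the codimension. The geometric content is that every member of $\mathcal{H}_9$ is a projection of the rational normal scroll $S_{1,8} \subset \lP^{10}$ from a $4$-plane $Q$, and by Corollary \ref{isoSingEq} the number of isolated double points equals the number of points in which $Q$ meets the secant variety $S(S_{1,8})$ away from $T(S_{1,8}) \cup Z_2$. Since $S(S_{1,8})$ is a fivefold in $\lP^{10}$, a general $Q$ is disjoint from it, and forcing one further intersection point is essentially a single condition; Theorem \ref{singCodim} makes precise that imposing $r$ such double points raises the codimension by at most $r$. For $r = 8$ this gives $\mathrm{codim}(\mathcal{H}_9^8,\mathcal{H}_9) \leq 8$, and combining the two steps proves the corollary.

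The step I expect to be the real obstacle is Theorem \ref{singCodim}, i.e. converting the heuristic ``one double point, one condition'' into a genuine upper bound on the codimension of $\mathcal{H}_9^8$ inside the $59$-dimensional Hilbert scheme. The difficulty is twofold. One must control the dimension while varying $S_{1,8}$ and the center $Q$ together, not merely in the Grassmannian of centers. And one must check that the special configurations actually produced by the construction — the plane $k$-chains, whose $\binom{k}{2}$ nodes are imposed simultaneously by a single rank condition on $P(s_1,\dots,s_k)\cdot\Lambda$ — do not land in a component of excessive codimension; the extra freedom of choosing which rulings enter each chain must be shown to offset the apparently large codimension of those rank conditions. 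This bookkeeping is exactly what Section \ref{sect:HilbSS} is designed to carry out, whereas the non-emptiness half reduces to the four-square arithmetic above.
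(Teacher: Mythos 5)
Your proposal is correct and follows the paper's own route: the corollary is deduced by citing Proposition \ref{fourSq} for the existence of a degree-$9$ type-$(1,8)$ scroll with $8$ isolated double points and Theorem \ref{singCodim} (with $D=9$, $N=5$, $r=8$, so $rN=40\leq(D+2)^2-1=120$ and $r(N-4)=8$) for the codimension bound. Your verification of the four-square data $(7,3,3,1)$, equivalently the chain configuration $(4,2,2,1)$, correctly supplies the numerical check that the paper leaves implicit.
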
 

Note that $\mathcal{H}_9^8$ parametrizes non-reduced schemes by definition. In the following, we use an overline to specify an element $\overline{S}\in\mathcal{H}_9^8$ and denote by $S$ its underlying reduced subscheme.

Let $U\subset|\fO_{\lP^5}(3)|$ be the locus parametrizing smooth cubic fourfolds. Define
\[
	\mathcal{Z}=\left\{(\overline{S},X)\in\mathcal{H}_9^8\times U:S\subset X\right\}.
\]
By Proposition \ref{SinX} there exists $(\overline{S},X)\in\mathcal{Z}$ such that $S$ has isolated singularities and $X$ is smooth.

The right projection $p_2:\mathcal{Z}\rightarrow U$ factors through $U_{42}$, the preimage of $\mathcal{C}_{42}$ in $U$. Indeed, by definition $S$ is the image of a rational normal scroll $F\subset\lP^{10}$ through a projection. Let $\epsilon:F\rightarrow X$ be the composition of the projection followed by the inclusion into $X$. Let ${[S]_X}^2$ is the self-intersection of $S$ in $X$. Then the number of singularities $D_{S\subset X}=8$ on $S$ satisfies the double point formula \cite[Th. 9.3]{Ful98}:
\[
	D_{S\subset X}=
	\frac{1}{2}\left({[S]_X}^2-\epsilon^*c_2(T_X)+c_1(T_F)\cdot\epsilon^*c_1(T_X)-c_1(T_F)^2+c_2(T_F) \right).
\]
By using this formula on can get ${[S]_X}^2=41$. Let $h_X$ be the hyperplane class of $X$. Then the intersection table for $X$ is
\[\begin{array}{c|cc}
			&h_X^2	&S\\
	\hline
	h_X^2 	&3		&9\\
	S		&9		&41.
\end{array}\]
So $X$ has discriminant $3\cdot41-9^2=42$.

\subsection{Odd degree unirational parametrizations}
\begin{thm}\label{dom}
Consider the diagram
\[
\xymatrix{
	&\mathcal{Z}\ar_{p_1}[ld]\ar^{p_2}[rd]&&\\
	\mathcal{H}_9^8&&U_{42}\ar[r]&\mathcal{C}_{42}
}
\]
\begin{enumerate}
	\item\label{dom1} $\mathcal{Z}$ dominates $U_{42}$. Therefore a general $X\in\mathcal{C}_{42}$ contains a degree-9 rational scroll with 8 isolated singularities and smooth otherwise.
	\item $\mathcal{C}_{42}$ is uniruled.
\end{enumerate}
\end{thm}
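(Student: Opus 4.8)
The plan is to establish (1) by pinning a dimension count at the explicit point of Proposition \ref{SinX}, and then to read off (2) from the linear-system structure of the fibres of $p_1$ together with the $\mathrm{PGL}_6$-action. First I would assemble the relevant dimensions. From $h^0(\lP^5,\fO(3))=56$ we get $\dim|\fO_{\lP^5}(3)|=55$; since a general cubic fourfold has finite automorphism group the fibres of $U\to\mathcal{C}$ are $35$-dimensional, so $\dim U_{42}=\dim\mathcal{C}_{42}+35=19+35=54$. By Lemma \ref{h0h1} a smooth degree-$9$ scroll in $\lP^5$ has $h^0(N_{S/\lP^5})=6\cdot 11-7=59$ with $h^{>0}=0$, so $\mathcal{H}_9$ is smooth of dimension $59$; the codimension bound (Proposition \ref{fourSq} and Theorem \ref{singCodim}) then gives $\dim\mathcal{H}_9^8\geq 51$.

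For (1) I would bound $\dim\mathcal{Z}$ from below through $p_1$ and from above through $p_2$, and let the two estimates pinch $p_2$ to dominance. For the fibre of $p_1$ over a scroll with exactly $8$ double points, the normalisation sequence gives $\chi(\fO_S(3))=\chi(\fO_{\widetilde S}(3h))-8=58-8=50$; as the higher cohomology of $\fO_S(3)$ vanishes and the restriction $H^0(\fO(3))\to H^0(\fO_S(3))$ has rank at most $50$, we obtain $h^0(\I_S(3))\geq 6$, so this fibre has dimension $\geq 5$ over the dense locus of genuinely $8$-nodal scrolls (and is exactly $5$ at the point $(\overline S,X)$ of Proposition \ref{SinX}, where $h^0(\I_S(3))=6$). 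Hence the component of $\mathcal{Z}$ through $(\overline S,X)$ has dimension $\geq 51+5=56$. On the other hand Proposition \ref{SinX}(3) says the Zariski tangent space to the fibre $p_2^{-1}(X)$ at $(\overline S,X)$, namely the space of first-order deformations of $S$ inside $X$, is $2$-dimensional, so $\dim_{(\overline S,X)}p_2^{-1}(X)\leq 2$. The theorem on fibre dimension then forces $\dim\overline{p_2(\mathcal{Z})}\geq 56-2=54=\dim U_{42}$; since $U_{42}$ is irreducible (it is the preimage of the irreducible $\mathcal{C}_{42}$ under the $\mathrm{PGL}_6$-bundle $U\to\mathcal{C}$), $p_2$ is dominant. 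Because the general member of $\mathcal{H}_9^8$ is a scroll with exactly $8$ double points and smooth otherwise, a general $X\in\mathcal{C}_{42}$ contains such a scroll, which is (1).

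For (2) I would use that the fibre $p_1^{-1}(\overline S)$ is the linear system $\lP(H^0(\I_S(3)))\cong\lP^5$ of cubics through $S$, mapped by $p_2$ isomorphically onto the linear subspace $\{X\in U:S\subset X\}\subset U_{42}$. Thus $U_{42}$ is swept out by these $\lP^5$'s, and the composite $\pi:\mathcal{Z}\to U_{42}\to\mathcal{C}_{42}$ is dominant by (1). It suffices to check that $\pi$ does not contract a general such $\lP^5$. If it did, then $\pi$ would factor birationally as $\pi=\psi\circ p_1$ for some dominant $\psi:\mathcal{H}_9^8\dashrightarrow\mathcal{C}_{42}$; since $\pi$ is $\mathrm{PGL}_6$-invariant and $p_1$ is $\mathrm{PGL}_6$-equivariant, $\psi$ would be $\mathrm{PGL}_6$-invariant and hence factor through the rational quotient $\mathcal{H}_9^8/\mathrm{PGL}_6$. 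As a general $8$-nodal scroll has finite projective stabiliser, $\dim(\mathcal{H}_9^8/\mathrm{PGL}_6)=51-35=16<19=\dim\mathcal{C}_{42}$, contradicting dominance of $\psi$. Therefore a general $\lP^5$-fibre maps non-constantly to $\mathcal{C}_{42}$, so a general line in it through a chosen point sweeps out a non-constant rational curve; these curves cover $\mathcal{C}_{42}$, which is therefore uniruled.

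The hard part is the dimension bookkeeping in (1): one has to combine the lower bound $\dim\mathcal{H}_9^8\geq 51$ coming from the Hilbert-scheme analysis of Section \ref{sect:HilbSS}, the control of the $p_1$-fibre at dimension $5$ over a dense locus (via the nodal count for $h^0(\I_S(3))$), and the exact first-order computation of Proposition \ref{SinX}(3) bounding the $p_2$-fibre by $2$, so that the inequalities collapse to $\dim\overline{p_2(\mathcal{Z})}=54$ rather than merely reproducing the expected dimension. Once $p_2$ is dominant, (2) reduces to the single clean observation that contracting the $\lP^5$'s would realise $\mathcal{C}_{42}$ as a quotient of dimension only $16$.
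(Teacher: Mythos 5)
Your proof of part (1) is correct and follows the paper's argument essentially verbatim: the lower bound $h^0(\I_S(3))\geq 6$ from the nodal count, the resulting $\lP^5$-fibration of $\mathcal{Z}$ over (a dense open subset of) $\mathcal{H}_9^8$, the bound $\dim\mathcal{H}_9^8\geq 51$ from Theorem \ref{singCodim}, and the pinch against $\dim U_{42}=54$ using the first-order computation of Proposition \ref{SinX}(\ref{SinX3}) are all exactly the steps in the paper.

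For part (2) you take a genuinely different route, and it has a gap. The paper's argument is short: if $\mathcal{Z}^\circ\to\mathcal{C}_{42}$ factored through the $\lP^5$-bundle, every cubic in $\lP H^0(\I_S(3))$ would lie in one $\lP{\rm GL}(6)$-orbit, contradicting Proposition \ref{SinX}(\ref{SinX4}), which supplies a \emph{singular} cubic $Y\supset S$ alongside the smooth $X$ --- note you never use item (\ref{SinX4}), which is the paper's entire input for uniruledness. Your replacement is a dimension count on the quotient $\mathcal{H}_9^8/\lP{\rm GL}(6)$, and it rests on two unestablished claims. First, you write $\dim(\mathcal{H}_9^8/\lP{\rm GL}(6))=51-35$, but only $\dim\mathcal{H}_9^8\geq 51$ is known a priori; the equality does follow from the squeeze in part (1) (the component through $\overline{S}$ is trapped between $51$ and $56-5$), but you must say so, since $\dim\mathcal{H}_9^8$ could otherwise be as large as $58$ and your inequality $16<19$ would evaporate. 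Second, and more seriously, the subtraction of $35$ assumes a general $8$-nodal scroll has finite stabilizer in $\lP{\rm GL}(6)$. This is plausible --- the stabilizer of $S_{4,5}\subset\lP^{10}$ is a $6$-dimensional group and one expects its action on the centers of projection producing $8$ nodes to have finite generic stabilizer --- but it is nowhere proved in the paper and is not obvious: rational normal scrolls have large projective automorphism groups, and you need the generic stabilizer to have dimension at most $2$ for the contradiction $16+s<19$ to survive. Either supply that verification or switch to the paper's argument via the singular cubic, which sidesteps both issues.
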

\begin{proof}
Let $(\overline{S},X)\in\mathcal{Z}$ be a pair satisfying Proposition \ref{SinX}. Then
\[
	h^0(\lP^5,\I_S(3))=6.
\]
On the other hand, the short exact sequence
\[
	0\rightarrow\I_S(3)\rightarrow\fO_{\lP^5}(3)\rightarrow\fO_S(3)\rightarrow0
\]
implies that
\begin{equation}\label{h0I(3)}
	h^0(\lP^5,\I_S(3))\geq h^0(\lP^5,\fO_{\lP^5}(3))-h^0(S,\fO_S(3)).
\end{equation}
Let $F\subset\lP^{10}$ be the preimage scroll of $S$. Then $H^0(S,\fO_S(3))$ consists of the sections in $H^0(F,\fO_F(3))$ which cannot distinguish the preimage of a singular point. We have $h^0(F,\fO_F(3))=58$ by Lemma \ref{hHir}, so $h^0(S,\fO_S(3))=58-8=50$. So the right hand side of (\ref{h0I(3)}) equals $56-50=6$. Thus $h^0(\lP^5,\I_S(3))$ attains a minimum.

The left projection $p_1:\mathcal{Z}\rightarrow\mathcal{H}_9^8$ has fiber $\lP H^0(\lP^5,\I_S(3))$ over all $\overline{S}\in\mathcal{H}_9^8$. Because the fiber dimension is an upper-semicontinuous function, there is an open subset $V\subset\mathcal{H}_9^8$ containing $\overline{S}$ such that $\mathcal{Z}$ is a $\lP^5$-bundle over $V$. We have $\dim\mathcal{H}_9=59$ by Proposition \ref{SSHilb}. Hence $\dim\mathcal{H}_9^8\geq59-8=51$ by Theorem \ref{singCodim}. Thus $\mathcal{Z}$ has dimension at least $51+5=56$ in a neighborhood of $(\overline{S},X)$.

By Proposition \ref{SinX} (\ref{SinX3}), $\mathcal{Z}$ has fiber dimension at most 2 over an open subset of $p_2\left(\mathcal{Z}\right)$ which contains $X$. Hence $p_2\left(\mathcal{Z}\right)$ has dimension at least $56-2=54$ in a neighborhood of $X$. On the other hand, $U_{42}$ is an irreducible divisor in $U$. In particular, $U_{42}$ has dimension 54. So $\mathcal{Z}$ must dominate $U_{42}$.

Next we prove the uniruledness of $\mathcal{C}_{42}$.

We already know that $\mathcal{Z}$ has an open dense subset $\mathcal{Z}^\circ$ isomorphic to a $\lP^5$-bundle over $V\subset\mathcal{H}_9^8$. If we can prove that the composition $\mathcal{Z}^\circ\xrightarrow{p_2}U_{42}\rightarrow\mathcal{C}_{42}$ does not factor through this bundle map, then the proof is done.

Let $(\overline{S},X)\in\mathcal{Z}^\circ$ be the pair as before. By Proposition \ref{SinX}, $S$ is also contained in a singular cubic $Y$. Assume that the map $\mathcal{Z}^\circ\rightarrow\mathcal{C}_{42}$ does factor through the bundle map instead. Then all of the cubics in $\lP H^0(\lP^5,\I_S(3))$ would be in the same $\lP{\rm GL}(6)$-orbit. In particular, the smooth cubic $X$ and the singular cubic $Y$ would be isomorphic, but this is impossible.
\end{proof}

\begin{prop}\label{varrho}\cite[Prop. 38]{Has16} \cite[Prop. 7.4]{HT01}
Let $X$ be a cubic fourfold and $S\subset X $ be a rational surface. Suppose $S$ has isolated singularities and smooth normalization, with invariants $D=\deg S$, section genus $g_H$, and self-intersection $\left<S,S\right>_X$. If
\begin{equation}\label{varrhoIneq}
	\varrho=\varrho(S,X):=\frac{D(D-2)}{2}+(2-2g_H)-\frac{\langle S,S\rangle_X}{2}>0,
\end{equation}
then $X$ admits a unirational parametrization $\rho:\lP^4\dashrightarrow X$ of degree $\varrho$.
\end{prop}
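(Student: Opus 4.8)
The plan is to realize the parametrization as the residual-point map out of the symmetric square of $S$ and to identify its degree with the number of secant lines of $S$ through a general point of $X$, which the double point formula evaluates. Since $S$ is rational, ${\rm Sym}^2 S$ is a rational fourfold, hence birational to $\lP^4$; so it suffices to produce a dominant rational map ${\rm Sym}^2 S\dashrightarrow X$ of degree $\varrho$. I would define $\rho$ on the open locus of reduced pairs $\{s_1,s_2\}$ whose span $\ell=\overline{s_1s_2}$ is not contained in $X$ by sending $\{s_1,s_2\}$ to the third point $x$ of $\ell\cap X$. This is where the hypotheses that $S$ has isolated singularities and smooth normalization enter: a general pair consists of distinct smooth points, and $\ell$ meets $X$ transversally in the three points $s_1,s_2,x$.

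For the degree, the key observation is that for $s_1,s_2\in S\subset X$ the line $\ell$ already meets the cubic $X$ in $s_1,s_2$, so its residual point is forced; conversely every secant of $S$ through a point $x\in X$ has $x$ as its residual point. Hence $\rho^{-1}(x)$ is exactly the set of secants of $S$ through $x$, and for a sufficiently general $x$ this equals the number $\delta$ of apparent double points of $S$, i.e. the number of secants through a general projection center. Dominance of $\rho$ is then equivalent to the secant variety of the nondegenerate surface $S$ filling $\lP^5$, i.e. to $\delta>0$; the hypothesis $\varrho>0$ will supply this once $\delta=\varrho$ is established.

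I would compute $\delta$ by applying the double point formula \cite[Th. 9.3]{Ful98}—the same formula already used above to pin down $\langle S,S\rangle_X$—to the general projection $f\colon\tilde S\to\lP^4$ of the smooth normalization $\tilde S$. Writing $L=f^*\fO_{\lP^4}(1)$ with $L^2=D$ and $K=K_{\tilde S}$, this yields the double point class in terms of $D$, $L\cdot K$, $K^2$, and $\chi(\fO_{\tilde S})$. One then rewrites it in the stated invariants by two substitutions: adjunction on $\tilde S$ gives $L\cdot K=2g_H-2-D$, and the normal bundle sequence $0\to N_{S/X}\to N_{S/\lP^5}\to\fO_S(3)\to0$ (which is where the degree of $X$ being $3$ is used) expresses $\langle S,S\rangle_X=c_2(N_{S/X})$ in the same variables. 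Substituting both identities collapses the double point count into $\tfrac{D(D-2)}{2}+(2-2g_H)-\tfrac{\langle S,S\rangle_X}{2}=\varrho$, so that $\deg\rho=\varrho$ and, in particular, $\varrho>0$ forces dominance.

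I expect the main obstacle to be the genericity and singularity bookkeeping rather than the Chern-class arithmetic. The composite $f\colon\tilde S\to\lP^4$ identifies not only the honest secants through $x$ but also the two branches over each non-normal double point of $S$, so the raw double point class overcounts by the number of singular points; this spurious contribution must be subtracted, and one checks it is exactly the singular-point term appearing in the double point formula for $\langle S,S\rangle_X$, so that the corrections cancel and the final expression is clean. One must also verify that a general $x\in X$ is a generic enough projection center—that it lies on exactly $\varrho$ reduced secants with distinct smooth endpoints and on no trisecant or tangent line forcing excess intersection—and these are precisely the statements guaranteed by the smooth-normalization and isolated-singularity hypotheses.
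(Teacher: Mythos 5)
The paper does not actually prove this proposition---it is imported verbatim from \cite[Prop.~38]{Has16} and \cite[Prop.~7.4]{HT01}, with only the construction of $\rho$ on ${\rm Sym}^2S$ sketched in the introduction---and your reconstruction follows exactly that standard argument: the residual-intersection map on the rational fourfold ${\rm Sym}^2S$, the identification of $\deg\rho$ with the number of secants of $S$ through a general point, and the double point formula combined with adjunction and the normal bundle sequence $0\to N_{S/X}\to N_{S/\lP^5}\to\fO_S(3)\to0$ to collapse the count to $\varrho$. Your bookkeeping is correct, including the two cancellations that make the formula depend only on $D$, $g_H$, $\langle S,S\rangle_X$ (the disappearance of $K_{\tilde S}^2$ and of the correction from the pre-existing non-normal double points of $S$), so the proposal matches the cited proof.
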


\begin{cor}
	A general $X\in\mathcal{C}_{42}$ has an unirational parametrization of degree 13.
\end{cor}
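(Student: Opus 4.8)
The plan is to apply Proposition \ref{varrho} to the singular scroll supplied by Theorem \ref{dom}. First I would invoke Theorem \ref{dom}(\ref{dom1}) to fix, for a general $X\in\mathcal{C}_{42}$, a degree-$9$ rational scroll $S\subset X$ of type $(1,8)$ having exactly eight isolated double points and smooth elsewhere. Before substituting into the formula, I must verify that $S$ meets the hypotheses of Proposition \ref{varrho}: that $S$ is a rational surface with isolated singularities and smooth normalization. By construction $S$ is the birational image of a Hirzebruch surface $\F_m$ under the projection of the preimage scroll, so its normalization is the smooth rational surface $\F_m$; and the eight singularities are isolated ordinary double points by Theorem \ref{mainStep}. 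Hence all the standing assumptions of Proposition \ref{varrho} hold.

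Next I would assemble the three numerical invariants that feed into (\ref{varrhoIneq}). The degree is $D=\deg S=9$. The self-intersection $\langle S,S\rangle_X=[S]_X^2=41$ was already computed in the previous subsection via the double point formula. The remaining input is the section genus $g_H$, which I claim is $0$: a general hyperplane section of $S_{1,8}$ not containing a ruling is a rational normal curve, and projection to $\lP^5$ preserves rationality of the general section, so the smooth normalization of a general hyperplane section of $S$ has genus $g_H=0$.

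Substituting these values into the formula of Proposition \ref{varrho} yields
\[
	\varrho = \frac{D(D-2)}{2} + (2-2g_H) - \frac{\langle S,S\rangle_X}{2}
	= \frac{9\cdot 7}{2} + 2 - \frac{41}{2}
	= \frac{63-41}{2} + 2 = 11 + 2 = 13 > 0.
\]
Since $\varrho>0$, Proposition \ref{varrho} produces a dominant rational map $\rho:\lP^4\dashrightarrow X$ of degree $13$, which is precisely the asserted parametrization. (Consistent with the remark in the introduction, $13$ is odd because $4\nmid 42$.)

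Every step here is either a direct appeal to an earlier result or an elementary arithmetic substitution, so there is no genuine obstacle. The only point demanding a moment of care is the determination $g_H=0$: one must confirm that the general hyperplane section remains rational and does not acquire arithmetic genus through passing near the double points. This is settled by observing that $g_H$ is measured on the smooth normalization $\F_m$ of $S$, on which the pullbacks of hyperplane sections are rational curves, so the section genus indeed vanishes.
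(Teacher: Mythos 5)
Your proposal is correct and follows the paper's own argument exactly: invoke Theorem \ref{dom}(\ref{dom1}) to obtain the degree-9 scroll with $\langle S,S\rangle_X=41$ and $g_H=0$, then substitute into Proposition \ref{varrho} to get $\varrho=13$. The extra care you take in verifying the hypotheses (smooth normalization, vanishing section genus) is sound but not different in substance from what the paper does implicitly.
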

\begin{proof}
By Theorem \ref{dom} (\ref{dom1}), a general cubic fourfold $X\in\mathcal{C}_{42}$ contains a degree-9 scroll $S$ having 8 isolated singularties, with $\langle S,S\rangle_X=41$ and $g_H=0$. Thus $\varrho=\frac{9\cdot7}{2}+2-\frac{41}{2}=13$ by Proposition \ref{varrho}.
\end{proof}

\subsection{Problems in higher discriminants}
Let $\Delta\subset\Sigma^{[2]}$ denote the divisor parametrizing non-reduced subschemes. Recall that it is a $\lP^1$-bundle over $\Sigma$. Its fibers correspond to smooth rational curves of degree $2n+1$ in $F_1(X)$, where the polarization on $F_1(X)$ is induced from $\lG(1,5)$. Each rational scroll $S\subset X$ induced by these rational curves has the intersection product
\[\begin{array}{c|cc}
			&h_X^2	&S\\
	\hline
	h_X^2 	&3		&2n+1\\
	S		&2n+1	&2n^2+2n+1,
\end{array}\]
where $h_X$ is the hyperplane section class of $X$. One can compute by the double point formula that $S$ has $n(n-2)$ singularities provided that they are all isolated. \cite[Prop. 7.2]{HT01}

In order to obtain an odd degree unirational parametrization for a generic member in $\mathcal{C}_d$ by Proposition \ref{varrho}, we need the existence of a degree $2n+1$ scroll $S\subset\lP^5$ with isolated singularities which has $n(n-2)$ singularities and is contained in a cubic fourfold $X$. We also need an estimate on the dimension of the associated Hilbert scheme $\mathcal{H}_{2n+1}^{n(n-2)}$ which contains $S$.

Section \ref{sect:constr} builds up a method to find such $S$, but the existence of a cubic fourfold $X$ containing $S$ requires examination with a computer. This works well with $n=4$ because in this case a generic such $S$ is contained in a cubic hypersurface. However, the same phenomenon may fail when $n\geq5$. Indeed, the Hilbert scheme $\mathcal{H}_{2n+1}^{n(n-2)}$ of degree $2n+1$ scrolls with $n(n-2)$ singularities satisfies $\dim\mathcal{H}_{2n+1}^{n(n-2)}\geq-n^2+14n+11$ by Theorem \ref{singCodim} and Proposition \ref{SSHilb}. When $5\leq n\leq8$, $\dim\mathcal{H}_{2n+1}^{n(n-2)}\geq55$ the dimension of cubic hypersurfaces in $\lP^5$, so a generic $S\in\mathcal{H}_{2n+1}^{n(n-2)}$ is not in a cubic fourfold. We don't know what happens when $n\geq 9$, but working in this range involves tedious trial and error.

\noindent{\bf Question.}
Assume $n\geq2$. Let $S\subset\lP^5$ be a degree $2n+1$ rational scroll which has $n(n-2)$ isolated singularities and smooth otherwise. When is $S$ contained in a cubic fourfold?

\section{The Hilbert scheme of rational scrolls}\label{sect:HilbSS}
Let $N\geq3$ be an integer. The Hilbert polynomial $P_S$ for a degree $D$ smooth surface $S\subset\lP^N$ has the following form
\[
	P_S(x) = \frac{1}{2}Dx^2+\left(\frac{1}{2}D+1-\pi\right)x+1+p_a,
\]
where $\pi$ is the genus of a generic hyperplane section and $p_a$ is the arithmetic genus of $S$. \cite[V, Ex 1.2]{Har77}

We are interested in the case when $S$ is a rational scroll. In this case $\pi=p_a=0$, so
\[
	P_S(x) = \frac{D}{2}x^2+\left(\frac{D}{2}+1\right)x+1.
\]
Every smooth surface sharing the same Hilbert polynomial has $\pi=0$ and $p_a=0$ also and thus is rational. We denote by ${\rm Hilb}_{P_S}(\lP^N)$ the Hilbert scheme of subschemes in $\lP^N$ with Hilbert polynomial $P_S$.

The closure of the locus parametrizing degree $D$ scrolls forms a component $\mathcal{H}_D\subset{\rm Hilb}_{P_S}(\lP^N)$. We study this space by stratifying it according to the types of the scrolls. Recall that, by fixing a rational normal scroll $S_{u,v}\subset\lP^{D+1}$ where $D=u+v$, a rational scroll $S\subset\lP^N$ of type $(u,v)$ is either $S_{u,v}$ itself or the image of $S_{u,v}$ projected from a disjoint $(D-N)$-plane. We define $\mathcal{H}_{u,v}\subset\mathcal{H}_D$ as the closure of the subset consisting of smooth rational scrolls of type $(u,v)$. In this section, we will first show that
\begin{prop}\label{SSHilb}
	Assume $D+1\geq N\geq 3$.
	\begin{enumerate}
	\item\label{SSHilb1} $\mathcal{H}_D$ is generically smooth of dimension $(N+1)(D+2)-7$.
	\item\label{SSHilb2} $\mathcal{H}_{u,v}$ is unirational of dimension $(D+2)N+2u-4-\delta_{u,v}$,
	\end{enumerate}
where $\delta_{u,v}$ is the Kronecker delta. We also have
	\begin{enumerate}\setcounter{enumi}{2}
	\item\label{SSHilb3} $\mathcal{H}_{u,v}\subset\mathcal{H}_{u+k,v-k}$ for $0\leq 2k<v-u$, and $\mathcal{H}_{\lfloor\frac{D}{2}\rfloor,\lceil\frac{D}{2}\rceil}=\mathcal{H}_D$.
	\end{enumerate}
\end{prop}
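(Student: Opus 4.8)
The plan is to prove the three assertions in the order (2), (1), (3): assertion (2) is the computational heart, (1) then follows from the normal-bundle computation of Lemma~\ref{h0h1}, and (3) from the deformation result Lemma~\ref{embDef}.

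For (2), I would parametrize the smooth scrolls of type $(u,v)$ by the morphisms that cut them out. Write $m=v-u$ and $L=\fO_{\F_m}(g+uf)$. Every such scroll is the image of a morphism $\phi\colon\F_m\to\lP^N$ with $\phi^*\fO_{\lP^N}(1)\cong L$ that is birational onto its image, and giving $\phi$ amounts to choosing an ordered $(N+1)$-tuple of sections of $L$ spanning a base-point-free subsystem, taken up to a common scalar. Since $h^0(\F_m,L)=D+2$ by Lemma~\ref{hHir}, these morphisms form a dense open subset $\mathcal{M}$ of $\lP\!\left(H^0(\F_m,L)^{\oplus(N+1)}\right)\cong\lP^{(N+1)(D+2)-1}$. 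The assignment $\phi\mapsto\mathrm{im}(\phi)$ is a dominant map $\mathcal{M}\dashrightarrow\mathcal{H}_{u,v}$, and as $\mathcal{M}$ is rational this already gives the unirationality of $\mathcal{H}_{u,v}$.

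To read off the dimension I would identify the fibers of $\phi\mapsto\mathrm{im}(\phi)$. By uniqueness of the normalization $\F_m\to S$, two morphisms have the same image precisely when they differ by an automorphism of $\F_m$ preserving $L$, and since $\phi$ is birational onto its image this action of $\mathrm{Aut}(\F_m,L)$ on $\mathcal{M}$ is free; hence $\dim\mathcal{H}_{u,v}=(N+1)(D+2)-1-\dim\mathrm{Aut}(\F_m,L)$. It remains to compute the latter. From $\chi(T_{\F_m})=6$, $h^2(\F_m,T_{\F_m})=0$, and $H^1(\F_m,T_{\F_m})\cong H^1(\lP^1,\fO_{\lP^1}(-m))$ (Lemma~\ref{absDef}) one gets $\dim\mathrm{Aut}(\F_m)=h^0(\F_m,T_{\F_m})=m+5$ for $m\ge1$, and every automorphism fixes the class of $L$ since it preserves the ruling and the negative section; when $m=0$, i.e. $u=v$, the surface $\F_0=\lP^1\times\lP^1$ carries the larger $6$-dimensional automorphism group. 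Thus $\dim\mathrm{Aut}(\F_m,L)=m+5+\delta_{u,v}$ uniformly, and substituting together with $D+2-(v-u)=2u+2$ yields $\dim\mathcal{H}_{u,v}=(D+2)N+2u-4-\delta_{u,v}$. I expect this bookkeeping, in particular the extra unit of symmetry in the balanced case that produces the Kronecker delta, to be the most error-prone step.

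For (1), I would use that $\mathcal{H}_D$ is irreducible and that smooth scrolls form a nonempty, hence dense, open subset, so its generic member $S$ is a smooth scroll to which Lemma~\ref{h0h1} applies, giving $h^i(S,N_{S/\lP^N})=0$ for $i>0$. The vanishing obstruction space $H^1(S,N_{S/\lP^N})$ makes the Hilbert scheme unobstructed, hence smooth, at $[S]$, with tangent space $H^0(S,N_{S/\lP^N})$, and the reduced formula of Lemma~\ref{h0h1} gives the dimension $(N+1)(D+2)-7$; note this agrees with the value of the formula in (2) at the balanced type, a useful consistency check. For (3), the inclusion $\mathcal{H}_{u,v}\subset\mathcal{H}_{u+k,v-k}$ follows from Lemma~\ref{embDef}: it supplies an embedded deformation in $\lP^{D+1}$ with special fiber $S_{u,v}$ and general fiber $S_{u+k,v-k}$, and projecting from a general $(D-N)$-plane disjoint from the total space transports this to a flat family in $\lP^N$ realizing a type-$(u,v)$ scroll as a flat limit of type-$(u+k,v-k)$ scrolls; passing to closures gives the inclusion. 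Iterating lowers $v-u$ until the type is balanced, so every stratum lies in $\mathcal{H}_{\lfloor D/2\rfloor,\lceil D/2\rceil}$; since $\mathcal{H}_D$ is by definition the closure of the union of all strata, the two coincide. The only step outside the literal range $2k<v-u$ of Lemma~\ref{embDef} is passing from $\F_2$ to $\F_0$ in even degree, which I would handle directly via the unique nonsplit extension $0\to\fO_{\lP^1}\to\fO_{\lP^1}(1)^{\oplus2}\to\fO_{\lP^1}(2)\to0$.
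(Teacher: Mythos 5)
Your proof is correct and its overall architecture matches the paper's: part (1) is read off from Lemma \ref{h0h1} exactly as in the text, and your parameter space $\lP\left(H^0(\F_m,L)^{\oplus(N+1)}\right)$ for part (2) is the paper's projective Stiefel variety $\lV(N,D+1)$ in disguise (an ordered $(N+1)$-tuple of sections of $|g+uf|$, written in the monomial basis, is precisely an $(N+1)$-frame in $\C^{D+2}$ up to common scalar), so the unirationality argument coincides. The differences are local. For the dimension in (2) the paper simply cites \cite[Lemma 2.6]{Cos06}, whereas you derive it by identifying the fibres of $\phi\mapsto\mathrm{im}(\phi)$ with free orbits of $\mathrm{Aut}(\F_m,L)$ and computing $h^0(T_{\F_m})=m+5+\delta_{u,v}$ from $\chi(T_{\F_m})=6$, $h^2=0$, and Lemma \ref{absDef}; your bookkeeping, including the extra unit of symmetry on $\F_0$ that produces the Kronecker delta, checks out against the balanced case of (1). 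For the final claim of (3) the paper sidesteps the excluded case $2k=v-u$ entirely by a dimension count: $\mathcal{H}_{\lfloor D/2\rfloor,\lceil D/2\rceil}$ is a closed irreducible subset of the irreducible component $\mathcal{H}_D$ of the same dimension $(N+1)(D+2)-7$ by (1) and (2), hence equals it. Your alternative --- iterating the inclusions and supplying the $\F_2\to\F_0$ degeneration by hand via the nonsplit extension $0\to\fO_{\lP^1}\to\fO_{\lP^1}(1)^{\oplus2}\to\fO_{\lP^1}(2)\to0$ --- is equally valid and has the merit of exhibiting the full chain of strata explicitly. (One shared caveat: both you and the paper treat the generic member of $\mathcal{H}_D$ in (1) as an embedded $\F_m$, which is automatic only when $D+1=N$ or $N\geq5$; since the application only uses $N=5$, this is harmless.)
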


When $D+1=N$, a generic element of $\mathcal{H}_{u,v}$ is projectively equivalent to a fixed rational normal scroll $S_{u,v}\subset\lP^{D+1}$. In this case $\mathcal{H}_{u,v}$ is birational to $\lP{\rm GL}(D+2)$ quotient by the stablizer of $S_{u,v}$.

When $D\geq N$, a generic element in $\mathcal{H}_{u,v}$ is the projection of $S_{u,v}$ from a $(D-N)$-plane. Note that $\mathcal{H}_{u,v}$ also records the scrolls equipped with embedded points along their singular loci. Such element occurs when the $(D-N)$-plane contacts the secant variety of $S_{u,v}$. We denote by $\mathcal{H}_{u,v}^r\subset\mathcal{H}_{u,v}$ the closure of the subset parametrizing the schemes such that the singular locus of each of the underlying varieties consists of $\geq r$ isolated singularities. Let $\mathcal{H}_D^r\subset\mathcal{H}_D$ denote the union of $\mathcal{H}_{u,v}^r$ through all possible types.

The main goal of this section is to prove the following theorem
\begin{thm}\label{singCodim}
	Assume $D\geq N\geq 5$, and assume the existence of a degree $D$ rational scroll with isolated singularities in $\lP^N$ which has at least $r$ singularities. Suppose $rN\leq(D+2)^2-1$, then $\mathcal{H}_D^r$ has codimension at most $r(N-4)$ in $\mathcal{H}_D$. Especially when $r=1$, $\mathcal{H}_D^1$ is unirational of codimension exactly $N-4$.
\end{thm}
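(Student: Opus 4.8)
The plan is to translate the statement into a dimension count on the Grassmannian of projection centers, using Corollary \ref{isoSingEq} to convert ``singular points of the scroll'' into ``points of intersection of the projection center with the secant variety.'' By Proposition \ref{SSHilb}(\ref{SSHilb3}) the balanced type realizes the whole component, $\mathcal{H}_{\lfloor D/2\rfloor,\lceil D/2\rceil}=\mathcal{H}_D$, so I fix the standard rational normal scroll $S_{u,v}\subset\lP^{D+1}$ with $u=\lfloor\frac{D}{2}\rfloor$, $v=\lceil\frac{D}{2}\rceil$ and write $\lG:=\lG(D-N,D+1)$ for the variety of $(D-N)$-planes. Every scroll of type $(u,v)$ in $\lP^N$ is the image of $S_{u,v}$ under projection from some $Q\in\lG$ followed by a projective change of coordinates in the target, so a dense open subset of $\mathcal{H}_D$ is the quotient of $\{Q\text{ disjoint from }S_{u,v}\}\times\lP{\rm GL}(N+1)$ by the stabilizer of $S_{u,v}$ in $\lP{\rm GL}(D+2)$. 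Setting $\Sigma:=S(S_{u,v})\setminus\bigl(T(S_{u,v})\cup Z_2\bigr)$, Corollary \ref{isoSingEq} says the scroll from $Q$ has at least $r$ isolated singularities exactly when $Q$ meets $\Sigma$ in at least $r$ points. Since the stabilizer preserves $\Sigma$ and this condition depends only on $Q$, the codimension of $\mathcal{H}_D^r$ in $\mathcal{H}_D$ equals the codimension in $\lG$ of the locus of $r$-secant planes, which is what I will bound.

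The key tool is an incidence-variety count. The surface $S_{u,v}$ is smooth, nondegenerate, and not secant-defective (the only secant-defective smooth surface being the Veronese in $\lP^5$), so $\dim S(S_{u,v})=5$; as $T(S_{u,v})$ has dimension at most $4$ and $Z_2\cong\lP^2$ when nonempty by Lemma \ref{noMoreSec}, the set $\Sigma$ is dense open of dimension $5$. I form
\[
	I_r=\left\{(Q,x_1,\dots,x_r)\in\lG\times\Sigma^r : x_i\in Q \text{ and the } x_i \text{ are distinct}\right\}.
\]
Inside $\lG\times\Sigma^r$ the subscheme $I_r$ is cut out by the $r$ incidence conditions $x_i\in Q$, each the vanishing of a section of the rank-$(N+1)$ tautological quotient bundle and hence of codimension at most $N+1$. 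Consequently every component of $I_r$ satisfies
\[
	\dim I_r\;\geq\;\dim\lG+5r-r(N+1)\;=\;\dim\lG-r(N-4).
\]

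I then pass to the projection $\pi:I_r\to\lG$, whose image is exactly the $r$-secant locus. The existence hypothesis supplies a scroll with isolated singularities and at least $r$ of them, i.e.\ a point $(Q_0,x_1^0,\dots,x_r^0)\in I_r$ with $Q_0\cap\Sigma$ finite; thus $\pi$ has a finite fibre over $Q_0$, and by upper semicontinuity of fibre dimension $\pi$ is generically finite on the component $I_r^0$ through this point. Hence $\overline{\pi(I_r^0)}$ has dimension $\dim I_r^0\geq\dim\lG-r(N-4)$, its generic member still meets $\Sigma$ in finitely many points, and so corresponds to a scroll with at least $r$ isolated singularities. Transporting through the quotient presentation gives $\operatorname{codim}_{\mathcal{H}_D}\mathcal{H}_D^r\leq r(N-4)$. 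The hypothesis $rN\leq(D+2)^2-1=\dim\lP{\rm GL}(D+2)$ is what permits the $r$ incidence conditions to be imposed independently (via the action of $\lP{\rm GL}(D+2)$ on $S_{u,v}$), keeping the count in its expected range and the generic $r$-secant intersection isolated. I expect this to be the main obstacle: once $r>D-N+1$ the $r$ secant points cannot be in general position (already $r=8>D-N+1=5$ in the case of interest), so one cannot run a naive general-position count and must instead control the component $I_r^0$ carrying the explicit configuration, which is precisely where the bound $rN\leq(D+2)^2-1$ is needed.

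Finally, for $r=1$ the estimate is sharp and the locus is unirational. Here $I_1=\{(Q,x):x\in Q\cap\Sigma\}$ is a Grassmann bundle over the (uni)rational variety $\Sigma$, hence irreducible and unirational, of dimension $5+(D-N)(N+1)$. Because $\dim\Sigma+\dim Q=5+(D-N)<D+1$ when $N\geq5$, the generic plane in $\pi(I_1)$ meets $\Sigma$ in finitely many points, so $\pi$ is generically finite and the entire $1$-secant locus $\overline{\pi(I_1)}$ is unirational of codimension exactly $(D-N+1)(N+1)-5-(D-N)(N+1)=N-4$. Transporting this through the bundle presentation of $\mathcal{H}_D$ shows that $\mathcal{H}_D^1$ is unirational of codimension exactly $N-4$, as claimed.
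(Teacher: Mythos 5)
Your incidence count over $r$-tuples of points of the secant variety is a legitimate alternative to the paper's machinery of the loci $\sigma(l,l')$: cutting $I_r$ out of $\lG(D-N,D+1)\times\Sigma^r$ by $r$ conditions each of codimension $N+1$ and using the hypothesized example to force generic finiteness of the projection does reproduce, for a \emph{single fixed type} $(u,v)$, the content of the paper's Lemma \ref{singuvCodim}. The genuine gap is in your reduction to the balanced type. You fix $u=\lfloor D/2\rfloor$, $v=\lceil D/2\rceil$ at the outset via $\mathcal{H}_{\lfloor D/2\rfloor,\lceil D/2\rceil}=\mathcal{H}_D$ and then feed the hypothesized example into the incidence variety for that particular $S_{u,v}$. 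But the hypothesis only supplies a singular scroll of \emph{some} type $(u_0,v_0)$ --- in the intended application it is the type $(1,8)$ scroll of Section \ref{sect:constr} --- and $S_{u_0,v_0}$, $S_{\lfloor D/2\rfloor,\lceil D/2\rceil}$ are not projectively equivalent, so their secant varieties and incidence varieties are different objects. Your argument therefore only produces a point of $I_r$ for the type $(u_0,v_0)$, hence only the bound $\operatorname{codim}_{\mathcal{H}_{u_0,v_0}}\mathcal{H}_{u_0,v_0}^r\leq r(N-4)$; since $\mathcal{H}_{u_0,v_0}$ itself has codimension $v_0-u_0-1$ in $\mathcal{H}_D$ by Proposition \ref{SSHilb}, this yields only $\operatorname{codim}_{\mathcal{H}_D}\mathcal{H}_D^r\leq r(N-4)+(v_0-u_0-1)$. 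For $D=9$, $N=5$, $r=8$, $(u_0,v_0)=(1,8)$ that is $14$ rather than $8$, which is too weak for the dominance argument in Theorem \ref{dom}.

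The missing ingredient is the paper's Lemma \ref{equiDim}: a general member of $\mathcal{H}_{u,v}^r$ deforms \emph{equisingularly} across types, so that $\mathcal{H}_{u,v}^r=\mathcal{H}_{u,v}\cap\mathcal{H}_{u+k,v-k}^r$ and the example can be transported to the balanced stratum before any count is run. That lemma is where the hypothesis $rN\leq(D+2)^2-1$ actually enters: one lifts the abstract deformation of $\F_{v-u}$ to an embedded deformation preserving the incidence of all $r$ secants with the center $Q$, and this imposes $r$ determinantal conditions, each of codimension $N$, on $\lP\left(\C\oplus H^0\left(T_{\lP^{D+1}}|_F\right)\right)\cong\lP^{(D+2)^2-1}$, whose common intersection is nonempty precisely because $rN\leq(D+2)^2-1$. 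In your write-up this hypothesis is never used in any concrete deduction --- you gesture at it as controlling ``general position'' of the $r$ secant points, which your soft lower bound on $\dim I_r$ does not in fact require --- and that unused hypothesis is the symptom of the missing type-change step.
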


\subsection{The component of rational scrolls}
Here we give a general picture of the component $\mathcal{H}_D$ and also prove Proposition \ref{SSHilb}. Note that Proposition \ref{SSHilb} (\ref{SSHilb1}) follows immediately from Lemma \ref{h0h1}.

As mentioned before, $\mathcal{H}_{u,v}$ is birational to $\lP{\rm GL}(D+2)$ when $D+1=N$. In order to study the case of $D\geq N$, we introduce the \emph{projective Stiefel variety}.
\begin{defn}\label{pStfl}
Let $V_{N+1}(\C^{D+2})={\rm GL(D+2)}/{\rm GL(D-N+1)}$ be the homogeneous space of $(N+1)$-frames in $\C^{D+2}$. The group $\C^*$ acts on $V_{N+1}(\C^{D+2})$ by rescaling, which induces a geometric quotient $\lV(N,D+1)$ that we call a projective Stiefel variety.
\end{defn}

$\lV(N,D+1)$ has a fiber structure over $\lG(N,D+1)$:
\[\begin{array}{cccl}
	\lP{\rm GL}(N+1)&\hookrightarrow&\lV(N,D+1)&\\
	&&\downarrow{\scriptstyle p}&\\
	&&\lG(N,D+1)&.
\end{array}\]
An element $\Lambda\in\lV(N,D+1)$ over $P\in\lG(N,D+1)$ can be expressed as a $(D+2)\times(N+1)$-matrix
\[
	\Lambda = \left(\begin{array}{cccc} v_1&v_2&\dots&v_{N+1}\end{array}\right)_{(D+2)\times(N+1)}
\]
up to rescaling, where $v_1,...,v_{N+1}$ are column vectors which form a basis of the underlying vector space of $P$. In particular, each $\Lambda\in\lV(N,D+1)$ naturally defines a projection $\cdot\Lambda:\lP^{D+1}\dashrightarrow\lP^N$ by multiplying the coordinates from the right.

Let $S_{u,v}\subset\lP^{D+1}$ be the rational normal scroll given by the standard parametrization (\ref{stdRNS}). When $D\geq N$, every rational scroll in $\mathcal{H}_{u,v}$ is the image of $S_{u,v}$ under the projection defined by some $\Lambda\in\lV(N,D+1)$. So there is a dominant rational map
\begin{equation}\label{grassH}
\begin{array}{cccc}
	\pi=\pi(S_{u,v}):&\lV(N,D+1)&\dashrightarrow&\mathcal{H}_{u,v}\\
	&\Lambda&\longmapsto&S_{u,v}\cdot\Lambda,
\end{array}
\end{equation}
where $S_{u,v}\cdot\Lambda$ is the rational scroll given by the parametric equation
\[\begin{array}{ccc}
	\C^2	&\longrightarrow	&\lP^N\\
	(s,t)		&\longmapsto		&(1,s,...,s^u,t,st,...,s^vt)\cdot\Lambda.
\end{array}\]

\begin{proof}[Proof of Proposition \ref{SSHilb} (\ref{SSHilb2})]
Both $\lP{\rm GL}(D+2)$ and $\lV(N,D+1)$ are rational quasi-projective varieties, so $\mathcal{H}_{u,v}$ is unirational either when $D+1=N$ or $D\geq N$ by the above construction. The formula for the dimension of $\mathcal{H}_{u,v}$ holds by \cite[Lemma 2.6]{Cos06}.
\end{proof}

\begin{proof}[Proof of Proposition \ref{SSHilb} (\ref{SSHilb3})]
By Lemma \ref{embDef}, there exists an embedded deformation $\mathcal{S}$ in $\lP^{D+1}$ over the dual numbers $D_t=\frac{\C[t]}{(t^2)}$ with $\mathcal{S}_0\cong S_{u,v}$ and $\mathcal{S}_t\cong S_{u+k,v-k}$ for $t\neq0$. For every rational scroll $S\in\mathcal{H}_{u,v}$, we can find a $\Lambda\in\lV(N,D+1)$ such that $S=S_{u,v}\cdot\Lambda$. Then $\mathcal{S}\cdot\Lambda$ defines an infinitesimal deformation of $S$ to a rational scroll of type $(u+k,v-k)$, which forces the inclusion $\mathcal{H}_{u,v}\subset\mathcal{H}_{u+k,v-k}$ to hold.

When $(u,v)=(\lfloor\frac{D}{2}\rfloor,\lceil\frac{D}{2}\rceil)$, i.e. when $u=v$ or $u=v-1$, we have $\dim\mathcal{H}_D=\dim\mathcal{H}_{u,v}=(N+1)(D+2)-7$ by Proposition \ref{SSHilb} (\ref{SSHilb1}) and (\ref{SSHilb2}). Because $\mathcal{H}_D=\bigcup_{u+v=D}\mathcal{H}_{u,v}$, we must have $\mathcal{H}_{\lfloor\frac{D}{2}\rfloor,\lceil\frac{D}{2}\rceil}=\mathcal{H}_D$.
\end{proof}

\subsection{Projections that produce one singularity}
We are ready to study the locus in $\mathcal{H}_D$ which parametrizes singular scrolls. Assume $D\geq N\geq 5$. Let us start from studying the projections that produce one singularity.

\begin{proof}[Notations \& Facts]\renewcommand{\qedsymbol}{}
Let $K$ and $L$ be any linear subspaces of $\lP^{D+1}$.
\begin{enumerate}
	\item We use the same symbol to denote a projective space and its underlying vector space. The dimension always means the projective dimension.
	\item Assume $K\subset L$, we write $K^{\perp L}$ for the orthogonal complement of $K$ in $L$. When $L=\lP^{D+1}$, we write $K^\perp$ instead of $K^{\perp\lP^{D+1}}$.
	\item $K+L$ means the space spanned by $K$ and $L$. We write it as $K\oplus L$ if $K\cap L=\{0\}$, and  write it as $K\oplus_\perp L$ if $K$ and $L$ are orthogonal to each other.
\end{enumerate}
The following two relations can be derived by linear algebra.
\begin{equation}\label{perpSum}(K\cap L)^\perp = K^\perp+L^\perp.\end{equation}
\begin{equation}\label{perpIn}(K\cap L)^{\perp K} = (K\cap L)^\perp\cap K.\end{equation}
\end{proof}

\begin{defn}
Let $l$ and $l'$ be a pair of distinct rulings on $S_{u,v}$, and let $P_{l,\,l'}$ be the 3-plane spanned by them. We define $\sigma(l,l')$ to be a subvariety of $\lG(N,D+1)$ by
\[
	\sigma(l,l') = \left\{P\in\lG(N,D+1)\,:\,\dim(P\cap P_{l,\,l'}^\perp)\geq N-3\right\}.
\]
\end{defn}

\begin{lemma}\label{sgll}
	Let $p:\lV(N,D+1)\rightarrow\lG(N,D+1)$ be the bundle map. Then $p^{-1}(\,\sigma(l,l')\,)\subset\lV(N,D+1)$ consists of the projections which produce singularities by making $l$ and $l'$ intersect.
\end{lemma}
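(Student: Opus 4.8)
The plan is to unwind the definition of $\sigma(l,l')$ in terms of what the associated projection does to the pair of rulings. Fix distinct rulings $l,l'\subset S_{u,v}$ spanning the $3$-plane $P_{l,l'}$, and let $\Lambda\in\lV(N,D+1)$ lie over $P\in\lG(N,D+1)$, where $P=\langle v_1,\dots,v_{N+1}\rangle$ is the $N$-plane defining the projection $\cdot\Lambda:\lP^{D+1}\dashrightarrow\lP^N$ from the center $Q:=P^\perp$, a $(D-N)$-plane. The projection makes $l$ and $l'$ meet in the image precisely when the $3$-plane $P_{l,l'}$ is collapsed onto a lower-dimensional linear space; since $l$ and $l'$ already span a $3$-plane and their images are lines that must meet, the correct condition is that $\Lambda$ restricted to $P_{l,l'}$ drops rank so that $P_{l,l'}$ maps to a $2$-plane (the two image rulings spanning a plane $2$-chain). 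The first step is therefore to record this geometric reformulation: $\Lambda$ forces $l$ and $l'$ to intersect if and only if $\dim\left(P_{l,l'}\cdot\Lambda\right)\leq 2$, equivalently the center $Q=P^\perp$ meets the secant $3$-plane $P_{l,l'}$, i.e. $Q\cap P_{l,l'}\neq\emptyset$ as projective spaces, which is the statement that the projection identifies two distinct points of $S_{u,v}$ on these rulings.

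The second step is the purely linear-algebraic translation that identifies this incidence condition with membership in $\sigma(l,l')$. The center of the projection is $Q=P^\perp$, and $Q$ is a $(D-N)$-plane. By the dimension count, $Q\cap P_{l,l'}\neq\emptyset$ (i.e. they share at least a point, so the intersection has projective dimension $\geq 0$) happens exactly when $\dim Q+\dim P_{l,l'}\geq\dim\langle Q,P_{l,l'}\rangle$; using $\dim Q=D-N$ and $\dim P_{l,l'}=3$ this becomes a statement about $\dim(P^\perp\cap P_{l,l'})\geq 0$. I would then dualize: applying the orthogonality relations (\ref{perpSum}) and (\ref{perpIn}), the condition $\dim(P^\perp\cap P_{l,l'})\geq 0$ is equivalent to $\dim(P\cap P_{l,l'}^{\perp})\geq N-3$. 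Concretely, $P^\perp\cap P_{l,l'}=(P+P_{l,l'}^\perp)^\perp$, so $P^\perp\cap P_{l,l'}$ is nonempty iff $P+P_{l,l'}^\perp\neq\lP^{D+1}$, iff $\dim(P+P_{l,l'}^\perp)\leq D$; and since $\dim P_{l,l'}^\perp=D+1-4=D-3$, the modular law $\dim(P\cap P_{l,l'}^\perp)=\dim P+\dim P_{l,l'}^\perp-\dim(P+P_{l,l'}^\perp)$ turns $\dim(P+P_{l,l'}^\perp)\leq D$ into $\dim(P\cap P_{l,l'}^\perp)\geq N+(D-3)-D=N-3$. This is exactly the defining inequality for $P\in\sigma(l,l')$.

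Finally I would assemble the two steps: $\Lambda\in p^{-1}(\sigma(l,l'))$ iff $P=p(\Lambda)\in\sigma(l,l')$ iff $\dim(P\cap P_{l,l'}^\perp)\geq N-3$ iff the center $Q=P^\perp$ meets $P_{l,l'}$ iff the projection $\cdot\Lambda$ identifies the appropriate points and makes $q(l)$ and $q(l')$ intersect. One cautionary point worth addressing explicitly is the distinction between a genuine intersection of the two image rulings and the degenerate possibility that a single ruling itself collapses (i.e. $Q$ meets $l$ or $l'$ individually), which would destroy the scroll rather than merely create a node; since we assume throughout that $Q$ is disjoint from $S_{u,v}$, the rulings stay as honest lines and the only effect of $Q\cap P_{l,l'}\neq\emptyset$ is to force the two image lines to meet. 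The main obstacle I anticipate is not any single computation but getting the dimension bookkeeping in the dualization exactly right—in particular confirming that the sharp threshold is $N-3$ and not an off-by-one neighbor, and verifying that the generic point of $\sigma(l,l')$ corresponds to a transverse (single-point) intersection rather than a higher-dimensional degeneration; the relations (\ref{perpSum}) and (\ref{perpIn}) are precisely the tools that make this transparent.
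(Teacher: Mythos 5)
Your proposal is correct and follows essentially the same route as the paper: both reduce the membership condition $\dim(P\cap P_{l,l'}^\perp)\geq N-3$ to the statement that the $3$-plane $P_{l,l'}$ collapses to a plane under $\cdot\Lambda$ (equivalently, that the center $P^\perp$ meets $P_{l,l'}$), which forces the two image rulings to intersect. The paper phrases the dualization via the identity $L\cdot\Lambda=(P^\perp+L)\cap P$ together with (\ref{perpSum}) and (\ref{perpIn}) rather than the modular dimension law, but this is only a cosmetic difference.
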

\begin{proof}
Let $P\in\lG(N,D+1)$ and $\Lambda\in p^{-1}(P)$ be arbitrary. The target space of the projection map $\cdot\Lambda$ is actually $P$. Let $L\subset\lP^{D+1}$ be any linear subspace, then the image $L\cdot\Lambda$ is identical to $(P^\perp+L)\cap P$. On the other hand, (\ref{perpSum}) and (\ref{perpIn}) implies that
$
	(P\cap L^\perp)^{\perp P} = (P\cap L^\perp)^\perp\cap P = (P^\perp+L)\cap P.
$
Therefore,
\[\begin{array}{l}
	N-1= \dim P-1 = \dim (P\cap L^\perp) + \dim(P\cap L^\perp)^{\perp P}\\
	= \dim (P\cap L^\perp) + \dim\left((P^\perp+L)\cap P\right) = \dim (P\cap L^\perp) + \dim\left(L\cdot\Lambda\right).
\end{array}\]

With $L= P_{l,\,l'}$, the equation implies that
\[\begin{array}{rcl}
	\dim(P\cap P_{l,\,l'}^\perp)\geq N-3&\Leftrightarrow&
	\dim\left(P_{l,\,l'}\cdot\Lambda\right)\leq2.
\end{array}\]
It follows that
\[
	p^{-1}(\,\sigma(l,l')\,) = \left\{\Lambda\in\lV(N,D+1)\,:\,\dim(P_{l,\,l'}\cdot\Lambda)\leq2\right\}.
\]
The image $P_{l,\,l'}\cdot\Lambda\subset\lP^N$ lies in a plane if and only if $l$ and $l'$ intersect each other after the projection $\cdot\Lambda:\lP^{D+1}\dashrightarrow\lP^N$. As a consequence, every $\Lambda\in p^{-1}(\,\sigma(l,l')\,)$ defines a projection which produces a singularity by making $l$ and $l'$ intersect.
\end{proof}

\subsection{The geometry of the variety $\boldsymbol{\sigma(l,l')}$}
The properties of the singular scroll locus that we are interested in are the unirationality and the dimension. As a preliminary, we describe here the geometry of the variety $\sigma(l,l')$, which implies immediately the rationality of $\sigma(l,l')$ and also allows us to find its dimension easily.

Instead of studying $\sigma(l,l')$ alone, the geometry would be more apparent if we consider generally the linear subspaces in $\lP^{D+1}$ which satisfies a certain intersectional condition. Fix a $(D-3)$-plane $L\subset\lP^{D+1}$. For every $j\geq0$, we define 
\begin{equation}\label{sb}
	\sigma_j(L) = \left\{\,P\in\lG(N,D+1):\dim(P\cap L)\geq N-4+j\,\right\}.
\end{equation}
For example, $\sigma_0(L)=\lG(N,D+1)$, and $\sigma_1(P_{l,\,l'}^\perp)=\sigma(l,l')$. Note that $P\subset L$ or $L\subset P$ if $j\geq\min\left(4\,,\,D-N+1\right)$ in (\ref{sb}), so we have
\[\begin{array}{lrl}
	\sigma_j(L)\supsetneq\sigma_{j+1}(L)	&{\rm if}&0\leq j<\min\left(4\,,\,D-N+1\right),\\
	\sigma_j(L)=\sigma_{j+1}(L)			&{\rm if}&j\geq\min\left(4\,,\,D-N+1\right).
\end{array}\]
Define $\sigma^\circ_j(L) = \left\{P\in\lG(N,D+1)\,:\,\dim(P\cap L)=N-4+j\right\}$, then 
\[\begin{array}{lrl}
	\sigma^\circ_j(L)=\sigma_{j}(L)-\sigma_{j+1}(L)	&{\rm if}&0\leq j<\min\left(4\,,\,D-N+1\right),\\
	\sigma^\circ_j(L)=\sigma_{j}(L)				&{\rm if}&j=\min\left(4\,,\,D-N+1\right).
\end{array}\]

\begin{lemma}\label{blSgm}
	Assume $1\leq j<\min\left(4\,,\,D-N+1\right)$, then $\sigma_j(L)$ is singular along $\sigma_{j+1}(L)$ and smooth otherwise. The singularity can be resolved by a  $\lG(3-j\,,\,D-N+4-j)$-bundle over $\lG(N-4+j\,,\,D-3)$. Especially, $\sigma_j(L)$ is rational with codimension $j(N-3+j)$ in $\lG(N,D+1)$.
\end{lemma}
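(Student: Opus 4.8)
The plan is to exhibit the asserted Grassmann bundle as an explicit resolution of $\sigma_j(L)$ and to read off all three conclusions from it, treating the singular locus by a tangent space computation. Throughout I use the paper's convention of writing $V$, $L$, $P$, $M$ both for projective subspaces and for their underlying vector spaces.

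First I would reinterpret the defining condition as a rank drop. Projection from $L$ is a rational map $\pi_L:\lP^{D+1}\dashrightarrow\lP^3$, and for an $N$-plane $P$ one has $\dim\pi_L(P)=\dim P-\dim(P\cap L)-1$; hence $\dim(P\cap L)\geq N-4+j$ is equivalent to $\dim\pi_L(P)\leq 3-j$, i.e. to a degeneration of the tautological map $P\to\lP^3$ to vector rank $\leq 4-j$. This both explains the expected codimension $j(N-3+j)$ and suggests the incidence resolution
\[
	\widetilde\sigma_j=\left\{(M,P)\in\lG(N-4+j,D-3)\times\lG(N,D+1):M\subset L,\;M\subset P\right\},
\]
where I identify $\lG(N-4+j,D-3)$ with the $(N-4+j)$-planes inside $L\cong\lP^{D-3}$. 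The first projection realizes $\widetilde\sigma_j$ as the bundle in the statement: over a fixed $M$ the $N$-planes $P\supset M$ are the $(3-j)$-planes of the quotient $\lP^{D+1}/M\cong\lP^{D-N+4-j}$, so $\mathrm{pr}_1$ is a $\lG(3-j,D-N+4-j)$-bundle over $\lG(N-4+j,D-3)$. In particular $\widetilde\sigma_j$ is smooth, and being a Grassmann bundle (Zariski-locally trivial, with rational fibers) over a rational base it is rational.

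Next I would analyze the second projection $\mathrm{pr}_2:\widetilde\sigma_j\to\lG(N,D+1)$. Its image is exactly $\sigma_j(L)$, and over $\sigma^\circ_j(L)$ the fiber is the single point $M=P\cap L$, whereas over $\sigma_{j+1}(L)$ it is the positive-dimensional family of $(N-4+j)$-planes inside $P\cap L$. Thus $\mathrm{pr}_2$ is birational onto the irreducible variety $\sigma_j(L)$, giving rationality of $\sigma_j(L)$ at once together with the equality $\dim\sigma_j(L)=\dim\widetilde\sigma_j$. Feeding the dimension formula $\dim\lG(a,b)=(a+1)(b-a)$ into the two factors of the bundle and subtracting from $\dim\lG(N,D+1)=(N+1)(D-N+1)$ yields, after the cancellation with common factor $N-3+j$, the codimension $j(N-3+j)$.

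Finally, for the assertion that $\mathrm{Sing}\,\sigma_j(L)=\sigma_{j+1}(L)$, I would work in a chart of $\lG(N,D+1)$ about a point $P_0$, identified with $\mathrm{Hom}(P_0,V/P_0)$, in which $\sigma_j(L)$ is cut out by the $(4-j+1)$-minors of the matrix representing $\pi_L|_P$. At a point of $\sigma_{j+1}(L)$ this matrix has rank $\leq 3-j$, so every one of its $(4-j)\times(4-j)$ subminors vanishes; consequently each defining $(4-j+1)$-minor has vanishing differential, and by the chain rule every local defining equation is singular there, so the Zariski tangent space fills the whole chart and the point is singular. Conversely, at $P_0\in\sigma^\circ_j(L)$ the tangent space is cut out by the vanishing of the homomorphism $P_0\cap L\to V/(P_0+L)$ induced by a first-order deformation, which amounts to $j(N-3+j)$ linear conditions, and these are independent precisely because the map sending a deformation to this induced homomorphism is surjective; hence $\sigma^\circ_j(L)$ is smooth of the right dimension. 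The main obstacle is exactly this last surjectivity — equivalently, the transversality of the tautological degeneracy map — which must be verified honestly (by choosing the deformation to restrict arbitrarily on $P_0\cap L$ and projecting modulo $L$) rather than imported from the general determinantal heuristic; once it is in hand, the two tangent computations pin down the singular locus exactly.
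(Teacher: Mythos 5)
Your resolution is the same one the paper builds: its $\mathbf{G}_j(L)$, the bundle with fiber $\lG(3-j,Q^\perp)$ over $Q\in\lG(N-4+j,L)$, is your incidence variety $\widetilde\sigma_j$ with the quotient $\lP^{D+1}/M$ replaced by the orthogonal complement of $M$, and the birationality of the map to $\sigma_j(L)$, the rationality, and the codimension $j(N-3+j)$ are obtained exactly as you describe. Your smoothness argument on $\sigma^\circ_j(L)$ also matches the paper's: it packages ``restrict to $P\cap L$, then reduce modulo $L$'' as a linear map $\Gamma:{\rm Hom}(P,P^\perp)\to{\rm Hom}\bigl(P\cap L,(P^\perp\cap L)^{\perp P^\perp}\bigr)$ and identifies the tangent space at a point of $\sigma^\circ_j(L)$ with $\ker\Gamma$, i.e.\ with your $j(N-3+j)$ independent linear conditions; the surjectivity you flag is immediate from the surjectivity of restriction and of the quotient projection.

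The one step where you genuinely diverge is the proof that points of $\sigma_{j+1}(L)$ are singular, and it is also the one soft spot. From ``every $(5-j)$-minor of $A_0+B\phi$ has vanishing differential'' you conclude that the Zariski tangent space of $\sigma_j(L)$ fills the chart; that inference requires the minors to generate the local radical ideal of $\sigma_j(L)$. This is automatic when $\phi\mapsto B\phi$ surjects onto ${\rm Hom}(P_0,V/L)$ (pullback of a prime generic determinantal ideal under a submersion), but that surjectivity fails whenever $D-N+1<4$, a range the lemma permits; in general you are intersecting a determinantal variety with an affine subspace, and reducedness of the equations is then a nontrivial fact (true here because $\sigma_j(L)$ is a one-condition Schubert variety, whose defining ideal in a big cell is classically generated by these minors, but that is an input you would need to cite). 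The paper sidesteps this entirely: it computes the cone of first-order deformation directions at $P\in\sigma^\circ_{j+k}(L)$ as $\{\phi:{\rm rk}\,\Gamma(\phi)\leq k\}$ and checks that for $k\geq1$ the target of $\Gamma$ has dimension $\geq k$, so this set is not closed under addition and hence not a linear space, which already contradicts smoothness without any claim about the defining ideal. Either route works, but yours needs the extra citation (or should be replaced by the tangent-cone observation) to be complete.
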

\begin{proof}
We define ${\bf G}_j(L)$ to be the fiber bundle
\[\begin{array}{ccc}
	\lG(3-j\,,\,D-N+4-j)&\,\hookrightarrow\,&{\bf G}_j(L)\\
	&&\downarrow\\
	&&\lG(N-4+j\,,\,L)
\end{array}\]
by taking $\lG(3-j\,,\,Q^\perp)$ as the fiber over $Q\in\lG(N-4+j\,,\,L)$. Apparently ${\bf G}_j(L)$ is smooth and rational. We denote an element of ${\bf G}_j(L)$ as $(Q,R)$, where $Q$ belongs to the base and $R$ belongs to the fiber over $Q$.

In the following, we will construct a birational morphism from ${\bf G}_j(L)$ to $\sigma_j(L)$, which determines the rationality and the codimension immediately. Then we will study the singular locus by analyzing the tangent cone to $\sigma_j(L)$ at a point on $\sigma_{j+1}(L)$.

\vspace{1.5mm}
\noindent\emph{Step 1. A birational morphism from ${\bf G}_j(L)$ to $\sigma_j(L)$.} 

Every $P\in\sigma^\circ_j(L)$ can be decomposed as $P=(P\cap L)\oplus_\perp(P\cap L)^{\perp P}$. Because $P\cap L\in\lG(N-4+j\,,\,L)$ and $(P\cap L)^{\perp P}$ is a $(3-j)$-plane in $(P\cap L)^\perp$, this induces a morphism
\[\begin{array}{cccc}
	\iota:&\sigma^\circ_j(L)&\longrightarrow&{\bf G}_j(L)\\
	&P&\longmapsto&\left(P\cap L\,,\,(P\cap L)^{\perp P}\right).
\end{array}\]
On the other hand, $Q\oplus_\perp R\in\sigma_{j}(L)$ for every $(Q,R)\in{\bf G}_j(L)$ since $\dim(Q\cap L)=N-4+j$ by definition. Thus there is a morphism
\begin{equation}\label{blSg}
\begin{array}{cccc}
	\epsilon:&{\bf G}_j(L)&\longrightarrow&\sigma_j(L)\\
	&(Q,R)&\longmapsto&Q\oplus_\perp R.
\end{array}
\end{equation}
Clearly, the composition $\epsilon\circ\iota$ is the same as the inclusion $\sigma^\circ_j(L)\subset\sigma_j(L)$. Therefore $\epsilon$ is a birational morphism.

The smoothness and rationality of ${\bf G}_j(L)$ implies that $\sigma^\circ_j(L)$ is smooth and that $\sigma_j(L)$ is rational. Moreover,
\[\begin{array}{l}
	\dim\sigma_j(L) = \dim{\bf G}_j(L)\\
	=(4-j)(D-N+1)+(N-3+j)(D-N+1-j)\\
	=(N+1)(D-N+1)-j(N-3+j)\\
	=\dim\lG(N,D+1)-j(N-3+j).
\end{array}\]
Hence $\sigma_j(L)$ has codimension $j(N-3+j)$ in $\lG(N,D+1)$.

\vspace{1.5mm}
\noindent\emph{Step 2. The tangent cones to $\sigma_j(L)$.}

Choose any $P\in\sigma_j(L)$ and fix a $\phi\in T_P\lG(N,D+1)\cong{\rm Hom}\left(P,P^\perp\right)$. Let $T_P{\sigma_j(L)}$ be the tangent cone to $\sigma_j(L)$ at $P$. By definition, $\phi\in T_P{\sigma_j(L)}$ if and only if the condition $\dim(P\cap L)\geq N-4+j$ is kept when $P$ moves infinitesimally in the direction of $\phi$, which is equivalent to the condition that $P\cap L$ has a subspace $Q$ of dimension $N-4+j$ such that $\phi(Q)\subset L$.

Consider the decomposition
\[
	P^\perp = (P^\perp\cap L)\oplus_\perp(P^\perp\cap L)^{\perp P^\perp}.
\]
Define 
\[
	\Gamma:{\rm Hom}\left(P,P^\perp\right)\rightarrow
	{\rm Hom}\left(P\cap L,(P^\perp\cap L)^{\perp P^\perp}\right)
\]
to be the composition of the restriction to $P\cap L$ followed by the right projection of the above decomposition.

For any subspace $Q\subset P\cap L$, $\phi(Q)\subset L$ if and only if $\phi(Q)\subset P^\perp\cap L$, if and only if $Q\subset\ker\Gamma(\phi)$. So $L$ has a subspace $Q$ of dimension $N-4+j$ such that $\phi(Q)\subset L$ if and only if the (projective) dimension of $\ker\Gamma(\phi)$ is at least $N-4+j$. Therefore,
\begin{equation}\label{TSgker}
	T_P{\sigma_j(L)} = \left\{\phi\in{\rm Hom}\left(P,P^\perp\right):\dim\left(\ker\Gamma(\phi)\right)\geq N-4+j\right\}.
\end{equation}

Note that $\sigma_j(L)$ is the disjoint union of $\sigma^\circ_{j+k}(L)$ for all $k$ satisfying
\[0\leq k\leq\min\left(4\,,\,D-N+1\right)-j.\]
Assume $P\in\sigma^\circ_{j+k}(L)$, i.e. $\dim(P\cap L) = N-4+j+k$, then (\ref{TSgker}) is equivalent to
\begin{equation}\label{TSgrk}
	T_P{\sigma_j(L)} = \left\{\phi\in{\rm Hom}\left(P,P^\perp\right):{\rm rk}\,\Gamma(\phi)\leq k\right\}.
\end{equation}
When $k=0$, the constraint becomes ${\rm rk}\,\Gamma(\phi)=0$, so $T_P{\sigma_j(L)} = \ker\Gamma$ is a vector space. This reflects the fact that $\sigma_j(L)$ is smooth on $\sigma^\circ_j(L)$ for all $j$. On the other hand, from the inequality
\[
	\dim(P\cap L)+\dim(P^\perp\cap L)\leq\dim(L)-1,
\]
we get
\[\begin{array}{l}
	\dim(P^\perp\cap L)\leq\dim(L)-\dim(P\cap L)-1\\
	 = (D-3)-(N-4+j+k)-1=D-N-j-k.\\
\end{array}\]
It follows that
\[\begin{array}{l}
	\dim\left((P^\perp\cap L)^{\perp P^\perp}\right) = \dim(P^\perp)-\dim(P^\perp\cap L)-1\\
	\geq (D-N)-(D-N-j-k)-1 = j+k-1.
\end{array}\]
So $\dim\left((P^\perp\cap L)^{\perp P^\perp}\right)\geq j+k-1\geq k$ once $k\geq1$. Under this condition, the linear combination of  members of rank $k$ in ${\rm Hom}\left(P\cap L,(P^\perp\cap L)^{\perp P^\perp}\right)$ can have rank exceeding $k$. So $T_P{\sigma_j(L)}$ can not be a vector space, thus $P$ is a singularity of $\sigma_j(L)$.
\end{proof}

Recall that $\sigma(l,l')=\sigma_1(P_{l,\,l'}^\perp)$, so Lemma \ref{blSgm} implies that
\begin{cor}\label{sgCodim}
	$\sigma(l,l')$ is rational with codimension $N-2$ in $\lG(N,D+1)$.
\end{cor}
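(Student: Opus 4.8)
The plan is to obtain the corollary as the single instance $j=1$, $L=P_{l,\,l'}^\perp$ of Lemma \ref{blSgm}, together with the identification $\sigma(l,l')=\sigma_1(P_{l,\,l'}^\perp)$ recorded just above the statement. Essentially all of the work has already been carried out in the lemma, so what remains is to check that its hypotheses hold for this choice of parameters and then to read off the two numerical conclusions.

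First I would verify the standing requirement of Lemma \ref{blSgm} that $L$ be a $(D-3)$-plane. Since $l$ and $l'$ are distinct rulings of $S_{u,v}$ they are linearly independent, so their span $P_{l,\,l'}$ is a genuine $3$-plane; its orthogonal complement then has projective dimension $(D+1)-3-1=D-3$, exactly as needed. With $L=P_{l,\,l'}^\perp$ in hand, $\sigma(l,l')=\sigma_1(L)$ is precisely the $j=1$ member of the family to which the lemma applies.

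It then suffices to substitute $j=1$ into the conclusions of Lemma \ref{blSgm}. The rationality of $\sigma_j(L)$ immediately yields the rationality of $\sigma(l,l')$, and the codimension formula $j(N-3+j)$ specializes to $1\cdot(N-3+1)=N-2$, which is the asserted codimension in $\lG(N,D+1)$.

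I expect no genuine obstacle here; the corollary is a bookkeeping consequence of the lemma. The only point deserving attention is the range of validity, since Lemma \ref{blSgm} requires $1\le j<\min(4,D-N+1)$, and for $j=1$ this holds precisely when $D>N$. The boundary case $D=N$ lies just outside this range, and I would dispose of it by a direct computation: there $\lG(N,D+1)=\lG(N,N+1)$ is the space of hyperplanes in $\lP^{N+1}$, and $\sigma(l,l')$ becomes the locus of hyperplanes containing the $(N-3)$-plane $P_{l,\,l'}^\perp$. This is a linear subspace $\lP^{3}$, which is rational and of codimension $N-2$, so the stated conclusions persist unchanged.
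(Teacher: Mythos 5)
Your proof is correct and follows exactly the paper's route: the corollary is read off from Lemma \ref{blSgm} with $j=1$ and $L=P_{l,\,l'}^{\perp}$, giving codimension $1\cdot(N-3+1)=N-2$. Your extra observation that the lemma's hypothesis $1\le j<\min(4,D-N+1)$ excludes the boundary case $D=N$, and your direct verification that $\sigma(l,l')$ is then a $\lP^3$ of hyperplanes containing $P_{l,\,l'}^{\perp}$, is a sound refinement that the paper glosses over.
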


\subsection{Families of the projections}
The singularities we have studied are those produced from the intersection of a fixed pair of distinct rulings. Now we are going to make use of the variety $\sigma(l,l')$ to control multiple singularities.

Let ${\lP^1}^{[2]}$ be the Hilbert scheme of two points on $\lP^1$ and $U\subset{\lP^1}^{[2]}$ be the open subset parametrizing reduced subschemes. On the rational normal scroll $S_{u,v}$, the set of $r$ pairs of distinct rulings
\[
	\left\{ (l_1+{l_1}',..., l_r+{l_r}') : l_i\neq{l_i}'\;\forall i \right\}
\]
is parametrized by $U^{\times r}$.

Let $\Sigma_r$ be a subset of $U^{\times r}\times\lG(N,D+1)$ defined by
\[
	\Sigma_r = \left\{ (l_1+{l_1}',...,l_r+{l_r}', P)\in U^{\times r}\times\lG(N,D+1)\,:\,P\in\bigcap_{i=1}^r\sigma(l_i,{l_i}')\right\}.
\]
Let $p_1$ be the left projection and $p_2$ the right projection. Then there is a diagram
\[\begin{array}{ccccc}
	\bigcap_{i=1}^r\sigma(l_i,{l_i}')&\subset&\Sigma_r&\xrightarrow{\scriptstyle p_2}&\lG(N,D+1)\\
	\downarrow&&\!\!\!{\scriptstyle p_1}\downarrow\enspace&&\\
	(l_1+{l_1}',...,l_r+{l_r}')&\in&\enspace {\rm U}^{\times r}.\!\!&&
\end{array}\]
By Lemma \ref{sgll}, the image $p_2(\Sigma_r)$ consists of the $N$-planes such that the projections to them produce at least $r$ singularities. By the diagram above and Corollary \ref{sgCodim}, the codimension of $\Sigma_r$ in $U^{\times r}\times\lG(N,D+1)$ is at most $r(N-4)$. When $r=1$, $\Sigma_1$ is rational with codimension exactly $N-4$.

Our goal is to compute the dimension of $p_2(\Sigma_r)$, so we care about whether $p_2$ is generically finite onto its image or not. It turns out that the condition below is sufficient (See Lemma \ref{SgrCodim})
\begin{equation}\label{ass}
\parbox[c]{9.5cm}{
	There exists a rational scroll with isolated singularities $S\subset\lP^N$ of type $(u,v)$ which has at least $r$ singularities.
}
\end{equation}

By considering $S$ as the projection of $S_{u,v}$ from $P^\perp$ for some $N$-plane $P$, we can apply Corollary \ref{isoSingEq} to translate (\ref{ass}) into the equivalent statement:
\begin{equation}\tag{\ref{ass}'}\label{ass'}
\parbox[c]{9.5cm}{
There exists an $N$-plane $P$ such that $P^\perp$ intersects $S(S_{u,v})$ in $\geq r$ points away from $T(S_{u,v})\cup Z_2$.
}
\end{equation}

\vspace{1.5mm}
\begin{prop}\label{assHold}
	(\ref{ass}) holds for $r\leq D-N+1$.
\end{prop}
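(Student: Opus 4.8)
The plan is to verify the equivalent form~(\ref{ass'}): I will produce a $(D-N)$-plane $Q=P^{\perp}$ meeting $S(S_{u,v})$ in at least $r$ points off $T(S_{u,v})\cup Z_2$, with $Q\cap S(S_{u,v})$ finite. First I would pick $r$ general points $y_1,\dots,y_r$ on $S(S_{u,v})$. Since $S_{u,v}$ is nondegenerate so is its secant variety, and for $r\le D-N+1\le D+2$ these points are independent and span an $(r-1)$-plane $\Pi$; being general, each $y_i$ avoids $T(S_{u,v})\cup Z_2$ and hence, by Lemma~\ref{noMoreSec}, lies on a unique secant line $\ell_i$, the lines $\ell_i$ being distinct. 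As $\dim\Pi=r-1\le D-N=\dim Q$, there is room to position $Q$: for $r=D-N+1$ I take $Q=\Pi$, and for $r<D-N+1$ I take $Q$ a general $(D-N)$-plane containing $\Pi$.

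The heart of the matter is the finiteness of $Q\cap S(S_{u,v})$, which I would deduce from the following claim: \emph{the span of $r$ general points of $S:=S(S_{u,v})$ meets $S$ in finitely many points.} Granting this, the case $r=D-N+1$ is immediate since then $Q=\Pi$. For $r<D-N+1$ I project from $\Pi$; a general $(D-N)$-plane $Q\supseteq\Pi$ corresponds to a general $(D-N-r)$-plane in $\lP^{D-r+1}$, and because $\dim S\le 5\le N$ the general such plane misses the image of $S$, the expected intersection dimension being $(D-N-r)+5-(D-r+1)=4-N<0$. Hence $Q\cap S\subseteq\Pi\cap S$, which is finite by the claim.

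To prove the claim I would form the incidence
\[
	\mathcal{I}=\left\{\left((y_i)_{i=1}^{r},z\right)\in S^{r}\times S\ :\ z\in\langle y_1,\dots,y_r\rangle\right\}.
\]
The projection $\mathcal{I}\to S^{r}$ admits the section $z=y_1$, so it is dominant; it therefore suffices to show $\dim\mathcal{I}=\dim S^{r}$, which forces the general fibre to be finite. Projecting instead to the last factor, the fibre over a general $z$ is covered by the components $\{y_j=z\}$, each of dimension $\dim S^{\,r-1}$, together with the locus on which every $y_i\neq z$. The latter is strictly smaller, because a general $(r-1)$-plane through $z$ meets $S$ emptily: one has $(r-1)+5-(D+1)=r+3-D\le -1$ for $r\le D-4$, and $r\le D-N+1$ with $N\ge5$ indeed gives $r\le D-4$. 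Thus the fibre has dimension $\dim S^{\,r-1}$ and $\dim\mathcal{I}=\dim S^{r}$, proving the claim.

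It remains to harvest the singularities. Finiteness of $Q\cap S(S_{u,v})$ forces $\ell_i\not\subseteq Q$ for each $i$ (the whole secant $\ell_i$ lies on $S(S_{u,v})$), so $Q\cap\ell_i=\{y_i\}$ and the $y_i$ are genuine points of $Q\cap S(S_{u,v})$ lying off $T(S_{u,v})\cup Z_2$. By Corollary~\ref{isoSingEq} the projection from $Q$ is a type-$(u,v)$ scroll with isolated singularities, at least $r$ in number, which is precisely~(\ref{ass}). The main obstacle is the equality $\dim\mathcal{I}=\dim S^{r}$ in the claim---that is, ruling out that the span of general points of $S(S_{u,v})$ sweeps up a curve on it; the remaining steps are linear algebra together with the projection-from-$\Pi$ dimension count.
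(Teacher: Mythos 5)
Your overall architecture (reduce to (\ref{ass'}), produce a $(D-N)$-plane $Q$ meeting $S(S_{u,v})$ in finitely many points with at least $r$ of them off $T(S_{u,v})\cup Z_2$, then invoke Corollary \ref{isoSingEq}) is sound, and the step passing from $\Pi$ to a general $Q\supseteq\Pi$ via projection from $\Pi$ is fine. The gap is in your central claim that $\Pi=\langle y_1,\dots,y_r\rangle$ meets $S(S_{u,v})$ in finitely many points, specifically in the sentence bounding the fibre of $\mathcal{I}\to S$ over a general $z$. The planes relevant to that fibre are not general $(r-1)$-planes through $z$: they are spans of points of $S(S_{u,v})$, and every such plane meets $S(S_{u,v})$ in at least the $r$ points $y_i$, so the count $(r-1)+5-(D+1)<0$ says nothing about them. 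Concretely, the locus $\left\{(y_i)\in S^r: z\in\langle y_1,\dots,y_r\rangle,\ y_i\neq z\right\}$ becomes, after projecting from $z$, the locus of linearly dependent $r$-tuples on the $5$-fold $\overline{\pi_z(S)}$; bounding its dimension by $5(r-1)$ requires knowing that $\langle \bar y_1,\dots,\bar y_j\rangle\cap\overline{\pi_z(S)}$ is finite for general tuples with $j<r$ --- which is the very claim being proved, one step down. (Determinantal loci come with an upper bound on their codimension for free; the lower bound you need does not follow.) A secondary loose end: even granting finiteness, you only check that the chosen $y_i$ avoid $T(S_{u,v})\cup Z_2$, whereas the remaining points of $Q\cap S(S_{u,v})$ must also be controlled for the projected scroll to have only the stated isolated singularities.

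The paper sidesteps all of this by not choosing the $r$ points freely. It takes a general $(D-4)$-plane $R$, of dimension complementary to $\dim S(S_{u,v})=5$, so that $R\cap S(S_{u,v})$ consists of exactly $\deg S(S_{u,v})=\binom{D-2}{2}$ reduced points, all avoiding the proper closed subset $T(S_{u,v})\cup Z_2$ by Bertini. Since $\binom{D-2}{2}\geq D-N+1\geq r$, one selects $D-N+1$ of these points; they span a $(D-N)$-plane $Q\subset R$, and finiteness of $Q\cap S(S_{u,v})$ is automatic because it sits inside the finite set $R\cap S(S_{u,v})$. Your span-of-general-points claim may well be true, but it would need an honest inductive proof (say, projecting from one point at a time); the complementary-linear-section trick makes it unnecessary.
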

\begin{proof}
By \cite[Prop. 2.2]{CM96} and \cite[Example 19.10]{Har95}, $\deg\left(S(S_{u,v})\right)={D-2\choose 2}$. Since $\dim\left(S(S_{u,v})\right)=5$ and $T(S_{u,v})\cup Z_2$ form a proper closed subvariety of $S(S_{u,v})$, we can use Bertini's theorem to choose a $(D-4)$-plane $R$ which intersects $S(S_{u,v})$ in ${D-2\choose 2}$ points outside $T(S_{u,v})\cup Z_2$. It is easy to check that ${D-2\choose 2}\geq D-N+1$. Thus we can choose $D-N+1$ of the intersection points to span a $(D-N)$-plane $Q\subset R$. Then $P=Q^\perp$ satisfies the hypothesis.
\end{proof}

Unfortunately, Proposition \ref{assHold} doesn't cover the case $D=9$, $N=5$ and $r=8$ in our proof of the unirationality of discriminant 42 cubic fourfolds. In the following, we estimate the dimension of $p_2(\Sigma_r)$ under the assumption (\ref{ass}) and leave the construction of examples to Section \ref{sect:constr}.

\begin{lemma}\label{SgrCodim}
	Suppose (\ref{ass}) holds. Then $p_2(\Sigma_r)$ has codimension $\leq r(N-4)$ in $\lG(N,D+1)$. When $r=1$, $p_2(\Sigma_1)$ is rational of codimension exactly $N-4$.
\end{lemma}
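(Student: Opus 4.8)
The plan is to compare the dimension of the image $p_2(\Sigma_r)$ with that of $\Sigma_r$ itself: if $p_2$ is generically finite onto its image on a component of $\Sigma_r$ of the expected dimension, then $\dim p_2(\Sigma_r)=\dim\Sigma_r$, and the codimension estimate reduces to a lower bound for $\dim\Sigma_r$. So I would first bound $\dim\Sigma_r$ from below, and then show that assumption (\ref{ass}) is exactly what forces $p_2$ to be generically finite.

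For the lower bound I would realize $\Sigma_r$ inside $U^{\times r}\times\lG(N,D+1)$ as the intersection $\bigcap_{i=1}^r\pi_i^{-1}(\Sigma_1)$, where $\pi_i$ is the projection remembering the $i$-th factor of $U^{\times r}$ together with $\lG(N,D+1)$ and $\Sigma_1\subset U\times\lG(N,D+1)$ is the incidence variety of pairs $(l+l',P)$ with $P\in\sigma(l,l')$. By Corollary \ref{sgCodim} each $\sigma(l,l')$ has codimension $N-2$, so $\Sigma_1$ has codimension $N-2$ in $U\times\lG(N,D+1)$ and each $\pi_i^{-1}(\Sigma_1)$ has codimension $N-2$ in the $r$-fold product. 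Assumption (\ref{ass}) makes $\Sigma_r$ nonempty: rewriting it as (\ref{ass'}) and applying Corollary \ref{isoSingEq} produces an $N$-plane $P_0$ together with the $r$ distinct pairs of rulings cut out by the $r$ points of $P_0^{\perp}\cap S(S_{u,v})$ lying away from $T(S_{u,v})\cup Z_2$, and these assemble into a point $w_0\in\Sigma_r$. Subadditivity of codimension in the smooth ambient space then shows that the component of $\Sigma_r$ through $w_0$ has codimension at most $r(N-2)$, hence dimension at least
\[
\dim\left(U^{\times r}\times\lG(N,D+1)\right)-r(N-2)=2r+\dim\lG(N,D+1)-r(N-2)=\dim\lG(N,D+1)-r(N-4).
\]

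The hard part is the generic finiteness of $p_2$ on this component, and it is here that (\ref{ass}) does the real work. A point of $p_2^{-1}(P_0)$ is a configuration of $r$ distinct collapsing pairs of rulings for the projection from $P_0^{\perp}$; by Lemma \ref{sgll} each such pair corresponds to a secant of $S_{u,v}$ meeting $P_0^{\perp}$, that is, to a point of $P_0^{\perp}\cap S(S_{u,v})$. The isolated-singularity hypothesis in (\ref{ass}), read through Corollary \ref{isoSingEq}, makes this intersection finite away from $T(S_{u,v})\cup Z_2$, and Lemma \ref{noMoreSec} guarantees that each such point lies on a unique secant and so determines a unique pair of rulings; points of $T(S_{u,v})$ correspond to coincident rulings and do not contribute to $U^{\times r}$, while $Z_2=\emptyset$ when $u\neq2$ (and for $u=2$ one chooses $P_0$ general so that $P_0^{\perp}$ misses $Z_2$). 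The danger the assumption removes is precisely that, without it, the collapsing pairs over a point of $\lG(N,D+1)$ could form a positive-dimensional family — as they do along $Z_2$ — which would inflate $\dim\Sigma_r$ without enlarging the image and destroy the bound. With only finitely many collapsing pairs over $P_0$, the fiber $p_2^{-1}(P_0)$ is finite, so by upper-semicontinuity of fiber dimension $p_2$ is generically finite on the component through $w_0$; combined with the previous paragraph this yields $\dim p_2(\Sigma_r)\ge\dim\lG(N,D+1)-r(N-4)$, i.e.\ codimension at most $r(N-4)$.

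For the sharp statement when $r=1$ I would argue directly on $\Sigma_1$. It fibers over the rational base $U$ with fibers $\sigma(l,l')=\sigma_1(P_{l,l'}^{\perp})$, which are rational by Lemma \ref{blSgm}; the birational Grassmann-bundle model of that lemma globalizes over $U$, so $\Sigma_1$ is rational of dimension $\dim\lG(N,D+1)-(N-4)$. Here $p_2$ is in fact generically injective: for a general $P$ in the image, $P^{\perp}$ meets the five-dimensional $S(S_{u,v})$ in a single point on a single secant, giving exactly one collapsing pair. Hence $p_2$ is birational onto its image, so $p_2(\Sigma_1)$ is rational of codimension exactly $N-4$.
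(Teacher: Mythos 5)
Your proposal is correct and follows essentially the same route as the paper's proof: assumption (\ref{ass}), read through (\ref{ass'}) and Lemma \ref{noMoreSec}, produces a point of $\Sigma_r$ over which the $p_2$-fiber is finite (each point of $P_0^{\perp}\cap S(S_{u,v})$ away from $T(S_{u,v})\cup Z_2$ lying on a unique secant and hence determining a unique pair of rulings), so $p_2$ is generically finite and the codimension bound follows from the codimension $N-2$ of each $\sigma(l,l')$, with the $r=1$ case handled the same way in both. The only cosmetic differences are that the paper records the fiber cardinality $\binom{m}{r}\cdot r!$ explicitly where you invoke upper-semicontinuity of fiber dimension, and that you spell out the subadditivity count and the globalized Grassmann-bundle model for the rationality of $\Sigma_1$, which the paper leaves implicit.
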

\begin{proof}
Let $l$ and $l'$ be distinct rulings on $S_{u,v}$. We write $P_{l,\,l'}$ for the 3-plane spanned by them. Note that $S(S_{u,v}) = \overline{\,\bigcup_{l\neq l'} P_{l,\,l'}\,}$. Let $P$ be an $N$-plane satisfying (\ref{ass'}). Then there exists $r$ pairs of distinct rulings $\left(l_1,{l_1}'\right),...,\left(l_r,{l_r}'\right)$ such that $P^\perp$ and $P_{l_i,\,{l_i}'}$ intersect in exactly one point for each pair $\left(l_i,{l_i}'\right)$. This implies that $\dim\left(P^\perp+P_{l_i,\,{l_i}'}\right)\leq D-N+3$ for all $i$, which is equivalent to $\dim\left(P\cap P_{l_i,\,{l_i}'}^\perp\right)\geq N-3$ for all $i$ by (\ref{perpSum}). Hence $P\in\bigcap_{i=1}^r\sigma\left(l_i,{l_i}'\right)$, i.e. $P$ belongs to the image of $p_2$.

Suppose $P^\perp$ intersects $S(S_{u,v})$ in $m$ points, then the $\left(l_1+{l_1}',...,l_r+{l_r}'\right)$ in the preimage of $P$ is unique up to the choices of $r$ from $m$ pairs, the reordering of the $r$ pairs, and the transpositions of the rulings in a pair. Hence $p_2$ is generically finite with $\deg p_2={m\choose r}\cdot r!$. In particular, $\Sigma_r$ and $p_2(\Sigma_r)$ are equidimensional.

From $\dim\left(S(S_{u,v})\right)=5$ and our assumption that $N\geq5$, we are able to choose a $(D-N)$-plane which intersect $S(S_{u,v})$ in any one and exactly one point. Therefore, we can find $P$ so that $P^\perp$ intersects $S(S_{u,v})$ in one point outside $T(S_{u,v})\cup Z_2$. This provides an example of (\ref{ass}) for $m=r=1$. It follows that $p_2$ has degree one, and the image is rational since $\sigma\left(l_1,{l_1}'\right)$ is rational by Corollary \ref{sgCodim}.
\end{proof}

\subsection{Proof of Theorem \ref{singCodim}}
\begin{lemma}\label{singuvCodim}
	Assume $D\geq N\geq 5$, and assume the existence of a degree $D$ rational scroll $S\subset\lP^N$ with isolated singularities which has at least $r$ singularities. Then $\mathcal{H}_{u,v}^r$ has codimension at most $r(N-4)$ in $\mathcal{H}_{u,v}$. For $r=1$, $\mathcal{H}_{u,v}^1$ is unirational of codimension exactly $N-4$.
\end{lemma}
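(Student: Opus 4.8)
The plan is to pull the codimension estimate for the bad locus in the Grassmannian, already supplied by Lemma~\ref{SgrCodim}, back through the dominant parametrization $\pi=\pi(S_{u,v}):\lV(N,D+1)\dashrightarrow\mathcal{H}_{u,v}$ of (\ref{grassH}). Everything hinges on the remark that the number of singularities of $S_{u,v}\cdot\Lambda$ is a projective invariant, so it depends only on the column span $P=p(\Lambda)\in\lG(N,D+1)$ and not on the chosen frame $\Lambda\in p^{-1}(P)$. Hence $\pi^{-1}(\mathcal{H}_{u,v}^r)$ is a union of fibres of $p$, and by Lemma~\ref{sgll} together with the definition of $p_2(\Sigma_r)$ it is exactly $p^{-1}(p_2(\Sigma_r))$.

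First I would record the transfer across $p$. Because $p:\lV(N,D+1)\to\lG(N,D+1)$ is a Zariski-locally trivial $\lP{\rm GL}(N+1)$-bundle, preimages preserve codimension, so
\[
	{\rm codim}\!\left(p^{-1}(p_2(\Sigma_r)),\lV(N,D+1)\right)={\rm codim}\!\left(p_2(\Sigma_r),\lG(N,D+1)\right)\leq r(N-4),
\]
the last inequality being Lemma~\ref{SgrCodim}. For $r=1$ the same lemma makes this codimension exactly $N-4$ and $p_2(\Sigma_1)$ rational; as a locally trivial $\lP{\rm GL}(N+1)$-bundle over a rational base, $p^{-1}(p_2(\Sigma_1))$ is then rational as well.

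Next I would transfer this estimate through $\pi$. The statement I need is that $\pi$ is equidimensional along $p^{-1}(p_2(\Sigma_r))$: the generic fibre of $\pi$ over $\mathcal{H}_{u,v}^r$ should have the same dimension as the generic fibre over $\mathcal{H}_{u,v}$. Granting this, taking images under $\pi$ turns the displayed identity into
\[
	{\rm codim}\!\left(\mathcal{H}_{u,v}^r,\mathcal{H}_{u,v}\right)={\rm codim}\!\left(p^{-1}(p_2(\Sigma_r)),\lV(N,D+1)\right)\leq r(N-4),
\]
which is the assertion; for $r=1$ the equality of codimensions gives exactly $N-4$, and $\mathcal{H}_{u,v}^1=\pi\!\left(p^{-1}(p_2(\Sigma_1))\right)$ is unirational, being the image of the rational variety $p^{-1}(p_2(\Sigma_1))$ under a dominant map.

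The equidimensionality is the one substantial point, and I would reduce it to a statement about automorphisms. Inspecting the fibre $\pi^{-1}(S)=\{\Lambda:S_{u,v}\cdot\Lambda=S\}$, two frames yield the same image precisely when they differ by left multiplication by an element of ${\rm Aut}(S_{u,v})\subset\lP{\rm GL}(D+2)$ and right multiplication by an element of ${\rm Stab}_{\lP{\rm GL}(N+1)}(S)$; thus $\dim\pi^{-1}(S)=\dim{\rm Aut}(S_{u,v})+\dim{\rm Stab}_{\lP{\rm GL}(N+1)}(S)$. Over the smooth locus the generic stabilizer is finite, so the generic fibre has the constant dimension $\dim{\rm Aut}(S_{u,v})$, and the fibre can grow only over the $\lP{\rm GL}(N+1)$-invariant locus $B\subset\mathcal{H}_{u,v}$ of scrolls carrying a positive-dimensional automorphism group. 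It therefore remains to check that $p^{-1}(p_2(\Sigma_r))\not\subset\pi^{-1}(B)$, for which it suffices to produce a single scroll of type $(u,v)$ with at least $r$ isolated singularities and finite projective automorphism group, the finite-stabilizer condition being open; the explicit examples of Section~\ref{sect:constr} (and, for $r=1$, the generic member produced in Lemma~\ref{SgrCodim}) serve this purpose. This is precisely where the existence hypothesis of the lemma enters in an essential rather than formal way, and I expect it to be the main obstacle.
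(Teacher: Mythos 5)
Your proposal follows the paper's proof essentially verbatim: the paper sets up the same diagram, identifies $\mathcal{H}_{u,v}^r=\pi\left(p^{-1}\left(p_2(\Sigma_r)\right)\right)$, transfers the codimension bound of Lemma~\ref{SgrCodim} first through the bundle $p$ and then through the dominant map $\pi$, and deduces unirationality for $r=1$ from the rationality of $p_2(\Sigma_1)$. The only difference is in the justification of the equidimensionality step: where you analyze fibres of $\pi$ via automorphism groups and stabilizers (and invoke the existence hypothesis there), the paper simply observes that $p^{-1}\left(p_2(\Sigma_r)\right)$ and its saturation $\pi^{-1}\left(\pi\left(p^{-1}\left(p_2(\Sigma_r)\right)\right)\right)$ both contain a dense open subset of projections producing $r$ singularities and hence have the same dimension.
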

\begin{proof}
We have the following diagram
\[\xymatrix{
	&\lV(N,D+1)\ar_{p}[d]\ar@{-->}[r]^{\qquad\pi}&\mathcal{H}_{u,v}\\
	\Sigma_r\ar[r]^{p_2\qquad}&\lG(N,D+1).&
}\]
By definition, $\mathcal{H}_{u,v}^r = \pi\left(p^{-1}\left(p_2(\Sigma_r)\right)\right)$.

By Lemma \ref{SgrCodim}, $p_2(\Sigma_1)$ is rational, which implies that $\mathcal{H}_{u,v}^1$ is unirational.

It's clear that $p_2(\Sigma_r)$ and $p^{-1}\left(p_2(\Sigma_r)\right)$ have the same codimension. On the other hand, $p^{-1}\left(p_2(\Sigma_r)\right)$ and $\pi^{-1}\left(\pi\left(p^{-1}\left(p_2(\Sigma_r)\right)\right)\right)$ have the same dimension since both contain an open dense subset consisting of the projections which generate $r$ singularities, so the codimension of $p^{-1}\left(p_2(\Sigma_r)\right)$ is the same as its image through $\pi$. Therefore, $p_2(\Sigma_r)$ and $\mathcal{H}_{u,v}^r$ have the same codimension in their own ambient spaces, and the results follows from Lemma \ref{SgrCodim}.
\end{proof}

Lemma \ref{singuvCodim} is the special case of Theorem \ref{singCodim} when restricting to the locus of a particular type on the Hilbert scheme. The next lemma shows that a general $S\in\mathcal{H}_D^r$ deforms equisingularly between different types under the assumption $rN\leq(D+2)^2-1$. Hence the dimension estimate made by Lemma \ref{singuvCodim} can be extended regardless of the types.

\begin{lemma}\label{equiDim}
Assume (\ref{ass}) and $rN\leq(D+2)^2-1$, then $\mathcal{H}_{u,v}^r = \mathcal{H}_{u,v}\cap\mathcal{H}_{u+k,v-k}^r$ for $0\leq 2k<v-u$.
\end{lemma}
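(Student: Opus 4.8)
The plan is to prove the two inclusions separately. Since $\mathcal{H}_{u,v}^r\subseteq\mathcal{H}_{u,v}$ holds by definition, the forward inclusion $\mathcal{H}_{u,v}^r\subseteq\mathcal{H}_{u,v}\cap\mathcal{H}_{u+k,v-k}^r$ reduces to $\mathcal{H}_{u,v}^r\subseteq\mathcal{H}_{u+k,v-k}^r$, while the reverse inclusion $\mathcal{H}_{u,v}\cap\mathcal{H}_{u+k,v-k}^r\subseteq\mathcal{H}_{u,v}^r$ is the substantial one and is where the numerical hypothesis is spent. Throughout I would reduce to a general point of each locus, where the underlying scroll is a genuine projection $S=S_{u,v}\cdot\Lambda$ with $r$ isolated singularities lying off $T(S_{u,v})\cup Z_2$, and use Corollary \ref{isoSingEq} to trade statements about the singularities of $S$ for statements about the finite set $Q\cap S(S_{u,v})$, where $Q=P^\perp$ is the center of the projection.

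For the forward inclusion I would run an equisingular deformation. Starting from a general $\overline{S}\in\mathcal{H}_{u,v}^r$ with $S=S_{u,v}\cdot\Lambda$, Lemma \ref{embDef} furnishes an embedded deformation $\mathcal{S}$ in $\lP^{D+1}$ with $\mathcal{S}_0\cong S_{u,v}$ and $\mathcal{S}_t\cong S_{u+k,v-k}$ for $t\neq0$. Projecting the family by the fixed $\Lambda$, exactly as in the proof of Proposition \ref{SSHilb} (\ref{SSHilb3}), yields a family $\mathcal{S}\cdot\Lambda$ whose central fiber is $S$ and whose general fiber is a scroll of type $(u+k,v-k)$. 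The $r$ nodes of $S$ are the $r$ reduced points of $Q\cap S(\mathcal{S}_0)$; this intersection being proper, the conservation of number forces $Q\cap S(\mathcal{S}_t)$ to carry at least $r$ points for small $t$, and since $T(\mathcal{S}_t)\cup Z_2$ is closed and misses these points at $t=0$ it still misses them nearby. Hence the general fiber lies in $\mathcal{H}_{u+k,v-k}^r$ and $\overline{S}$ in its closure; as the general point of $\mathcal{H}_{u,v}^r$ lands in the closed set $\mathcal{H}_{u+k,v-k}^r$, the inclusion follows.

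For the reverse inclusion I would show that every irreducible component of $\mathcal{H}_{u,v}\cap\mathcal{H}_{u+k,v-k}^r$ has its general point in the locus defining $\mathcal{H}_{u,v}^r$, so that the component, being the closure of its general points, sits inside $\mathcal{H}_{u,v}^r$. A general point $\overline{S}$ of such a component is a projection $S=S_{u,v}\cdot\Lambda$ of type $(u,v)$, and since $\overline{S}$ lies in $\mathcal{H}_{u+k,v-k}^r$, semicontinuity forces the non-reduced locus of $\overline{S}$, supported on $\operatorname{Sing}(S)$, to have length at least $r$. The only way $\overline{S}$ can fail to lie in $\mathcal{H}_{u,v}^r$ is that these singularities be non-isolated, i.e. that $Q$ meet $S(S_{u,v})$ along a curve; by Lemma \ref{noMoreSec} and Proposition \ref{noCSing} this forces $Q$ to meet the tangent variety $T(S_{u,v})$ in a curve, or to meet the plane $Z_2$ when $u=2$. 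Combining $\operatorname{codim}\left(\mathcal{H}_{u,v},\mathcal{H}_{u+k,v-k}\right)=2k$ from Proposition \ref{SSHilb} (\ref{SSHilb2}) with $\operatorname{codim}\left(\mathcal{H}_{u+k,v-k}^r,\mathcal{H}_{u+k,v-k}\right)\leq r(N-4)$ from Lemma \ref{singuvCodim}, a proper intersection has codimension $r(N-4)$ in $\mathcal{H}_{u,v}$, matching the codimension the equisingular correspondence of the forward step assigns to $\mathcal{H}_{u,v}^r$; together with the forward inclusion this pins down the top-dimensional part.

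The hard part will be exactly this properness, that is, ruling out that the non-isolated-singularity centers $Q$ identified above fill a whole component of $\mathcal{H}_{u,v}\cap\mathcal{H}_{u+k,v-k}^r$, and it is here that $rN\leq(D+2)^2-1$ is indispensable. Such a component would consist of limits of type-$(u+k,v-k)$ nodal scrolls whose type-$(u,v)$ specialization degenerates to a curve of singularities, and would genuinely escape the closure $\mathcal{H}_{u,v}^r$ of the isolated locus. I would therefore estimate the dimension of the locus of centers $Q$ for which $Q\cap S(S_{u,v})$ acquires positive dimension and show, under $rN\leq(D+2)^2-1$, that it falls strictly below $\dim\mathcal{H}_{u,v}^r$ and so cannot support a component of the intersection. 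Since $(D+2)^2-1=\dim\lP\mathrm{GL}(D+2)$ measures the total freedom available to displace the $r$ secant incidences independently, this inequality is precisely the bookkeeping that keeps the degenerate loci in lower dimension; once it is in force, the general point of every component of $\mathcal{H}_{u,v}\cap\mathcal{H}_{u+k,v-k}^r$ has isolated singularities and lies in $\mathcal{H}_{u,v}^r$, and the reverse inclusion closes.
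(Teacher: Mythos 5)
Your division of labor is inverted relative to where the difficulty actually lies, and the step you lean on for the forward inclusion is false. You claim that after projecting the embedded deformation $\mathcal{S}$ of Lemma \ref{embDef} by the \emph{fixed} center $Q=P^\perp$ (as in the proof of Proposition \ref{SSHilb} (\ref{SSHilb3})), ``conservation of number'' keeps $r$ points in $Q\cap S(\mathcal{S}_t)$. But that intersection is not proper: $Q$ is a $(D-N)$-plane and the secant variety is $5$-dimensional in $\lP^{D+1}$, so for $N\geq5$ the expected dimension of $Q\cap S(\mathcal{S}_t)$ is $4-N<0$. The $r$ points of $Q\cap S(\mathcal{S}_0)$ are an excess intersection --- a positive-codimension condition on $Q$, namely the codimension $\leq r(N-4)$ of $p_2(\Sigma_r)$ --- and under a generic perturbation of the secant variety they simply disappear; there is no intersection number to conserve. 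This is precisely the difficulty the lemma exists to overcome: one must show that the abstract deformation $\mathcal{F}\in H^1(T_F)$ normal to $\mathcal{H}_{u,v}$ admits \emph{some} embedded lift (the lifts differ by $H^0(T_{\lP^{D+1}}|_F)$, of dimension $(D+2)^2-1$) that preserves the incidence of the $r$ secants $\gamma_1,\dots,\gamma_r$ with $Q$. Each incidence is a determinantal condition of codimension $N$ on the correcting section $\alpha$, and the hypothesis $rN\leq(D+2)^2-1$ is exactly what makes the $r$ conditions simultaneously satisfiable (together with a further argument that the solution is not confined to the hyperplane $\alpha_0=0$ of $\lP\left(\C\oplus H^0(T_{\lP^{D+1}}|_F)\right)$). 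Your proposal never confronts this, so the forward inclusion is unproved.

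Correspondingly, you spend the numerical hypothesis on the reverse inclusion, where the paper needs none of it: it takes $\mathcal{H}_{u,v}\cap\mathcal{H}_{u+k,v-k}^r\subset\mathcal{H}_{u,v}^r$ as immediate from the definitions. Your treatment of that direction is in any case only a program --- ``estimate the dimension of the locus of centers $Q$ for which $Q\cap S(S_{u,v})$ acquires positive dimension and show that it falls strictly below $\dim\mathcal{H}_{u,v}^r$'' states what would need to be done rather than doing it, and the inequality $rN\leq(D+2)^2-1$ has no evident bearing on that count. To repair the argument, redirect the effort to the forward inclusion and carry out the incidence-preserving lift described above.
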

\begin{proof}
It is trivial that $\mathcal{H}_{u,v}^r\supset\mathcal{H}_{u,v}\cap\mathcal{H}_{u+k,v-k}^r$. To prove that $\mathcal{H}_{u,v}^r\subset\mathcal{H}_{u,v}\cap\mathcal{H}_{u+k,v-k}^r$, it is sufficient to show that a generic element in $\mathcal{H}_{u,v}^r$ deforms equisingularly to an element in $\mathcal{H}_{u+k,v-k}$.

If $(u,v)=\left(\lfloor\frac{D}{2}\rfloor,\lceil\frac{D}{2}\rceil\right)$ then there is nothing to prove, so we assume $(u,v)\neq\left(\lfloor\frac{D}{2}\rfloor,\lceil\frac{D}{2}\rceil\right)$. The elements satisfying (\ref{ass}) form an open dense subset of $\mathcal{H}_{u,v}^r$. Let $S\in\mathcal{H}_{u,v}^r$ be one of them, and assume $S$ is the image of $F\cong S_{u,v}\subset\lP^{D+1}$ projected from some $(D-N)$-plane $Q$. By hypothesis, $F$ has $r$ secants $\gamma_1$, ..., $\gamma_r$ incident to $Q$. Assume $\gamma_j\cap F=\{x_j,y_j\}$ for $j=1,...,r$.

$H^1\left(T_{\lP^{D+1}}|_F\right)=0$ by Lemma \ref{embDef}, so the short exact sequence
$
	0\rightarrow T_F\rightarrow T_{\lP^{D+1}}|_F\rightarrow N_{F/\lP^{D+1}}\rightarrow0
$
induces the exact sequence
\[
	0\rightarrow H^0\left(T_F\right)\rightarrow H^0\left(T_{\lP^{D+1}}|_F\right)
	\rightarrow H^0\left(N_{F/\lP^{D+1}}\right)\rightarrow H^1\left(T_F\right)\rightarrow0.
\]
By Lemma \ref{absDef}, $h^1\left(F,T_F\right)=h^1\left(\lP^1,\fO_{\lP^1}(u-v)\right)=v-u-1$, the same as the codimension of $\mathcal{H}_{u,v}$ in $\mathcal{H}_D$, thus a deformation normal to $\mathcal{H}_{u,v}$ is induced from an element in $H^1\left(T_F\right)$. In order to prove that the deformation is equisingular, it is sufficient to prove that for all $\mathcal{F}\in H^1\left(T_F\right)$ and its lift $\mathcal{S}\in H^0\left(N_{F/\lP^{D+1}}\right)$, there exists $\alpha\in H^0\left(T_{\lP^{D+1}}|_F\right)$ such that the vectors $\mathcal{S}(x_j)+\alpha(x_j)\in T_{\lP^{D+1},x_j}$ and $\mathcal{S}(y_j)+\alpha(y_j)\in T_{\lP^{D+1},y_j}$ keep $\gamma_j$ contact with $Q$ for $j=1,...,r$, so that $\mathcal{S}+\alpha$ is a lift of $\mathcal{F}$ representing an embedded deformation which preserves the incidence of the $r$ secants to $Q$.

Note that for arbitrary $p\in\lP^{D+1}$, the tangent space $T_{\lP^{D+1},p}\cong{\rm Hom}\left(p,p^\perp\right)\cong p^\perp$ can be considered as a subspace of $\lP^{D+1}$. We identify a point in $\lP^{D+1}$ with its underlying vector. Let $\gamma=\gamma_j$ for some $j$, and let $\{x,y\}=\gamma\cap S_{u,v}$ with $x=(x_1,...,x_{D+1})$ and $y=(y_1,...,y_{D+1})$. The condition that $\mathcal{S}(x)+\alpha(x)$ and $\mathcal{S}(y)+\alpha(y)$ keep $\gamma$ contact with $Q$ is equivalent to the condition that the set of vectors consisting of $x+\mathcal{S}(x)+\alpha(x)$, $y+\mathcal{S}(y)+\alpha(y)$ and the basis of $Q$ is not independent.

One can compute that $h^0\left(T_{\lP^{D+1}}|_F\right)=(D+2)^2-1$ by the Euler exact sequence
$
	0\rightarrow\fO_F\rightarrow\fO_F(1)^{\oplus(D+2)}\rightarrow T_{\lP^{D+1}}|_F\rightarrow0
$
and Lemma \ref{hHir}. Suppose $H^0\left(T_{\lP^{D+1}}|_F\right)$ has basis $e_1$, ..., $e_{(D+2)^2-1}$, we write the evaluation of $e_i$ at $p$ as $e_i(p)=(e_i(p)_1,...,e_i(p)_{D+1})$. Let $\alpha=\sum_{i\geq1}\alpha_ie_i$, $\mathcal{S}(x)=\sum_{i\geq1}c_ie_i(x)$ and $\mathcal{S}(y)=\sum_{i\geq1}d_ie_i(y)$, also write $Q=\left(q_{i,j}\right)$ as a $(D-N+1)\times(D+2)$-matrix. Then the dependence condition is equivalent to the condition that the $(D-N+3)\times(D+2)$-matrix
\[
	A_\gamma
	=\left(\begin{array}{c}
	\alpha_0x+\alpha_0\mathcal{S}(x)+\alpha(x)\\
	\alpha_0y+\alpha_0\mathcal{S}(y)+\alpha(y)\\
	Q
	\end{array}\right)\\
\]
\[
	=\left(\begin{array}{ccc}
	\alpha_0x_0+\sum\left(\alpha_0c_i+\alpha_i\right)e_i(x)_0&...&
	\alpha_0x_{D+1}+\sum\left(\alpha_0c_i+\alpha_i\right)e_i(x)_{D+1}\\
	\alpha_0y_0+\sum\left(\alpha_0d_i+\alpha_i\right)e_i(y)_0&...&
	\alpha_0y_{D+1}+\sum\left(\alpha_0d_i+\alpha_i\right)e_i(y)_{D+1}\\
	q_{0,0}&...&q_{0,D+1}\\
	\vdots&&\vdots\\
	q_{D-N,0}&...&q_{D-N,D+1}
	\end{array}\right)
\]
has rank at most $D-N+2$. Here we homogenize the first two rows by $\alpha_0$, so that the matrix defines a morphism
\[\begin{array}{cccc}
	{\bf A}_\gamma:&\lP\left(\C\oplus H^0\left(T_{\lP^{D+1}}|_F\right)\right)\cong\lP^{(D+2)^2-1}&
	\rightarrow&\lP^{(D-N+3)(D+2)-1}\\
	&(\alpha_0,...,\alpha_{(D+2)^2-1})&\mapsto&A_\gamma.
\end{array}\]

Let $M_{D-N+2}\subset\lP^{(D-N+3)(D+2)-1}$ be the determinantal variety of matrices of rank at most $D-N+2$. Then ${{\bf A}_\gamma}^{-1}\left(M_{D-N+2}\right)\subset\lP\left(\C\oplus H^0\left(T_{\lP^{D+1}}|_F\right)\right)$ is an irreducible and nondegenerate subvariety of codimension $N$, whose locus outside $\alpha_0=0$ parametrizes those $\alpha\in H^0\left(T_{\lP^{D+1}}|_F\right)$ such that $\mathcal{S}+\alpha$ preserves the incidence between $\gamma$ and $Q$.

It follows that the intersection $\bigcap_{j=1}^r{{\bf A}_{\gamma_j}}^{-1}\left(M_{D-N+2}\right)$ is nonempty by the hypothesis $rN\leq(D+2)^2-1$. Moreover, it is not contained in the hyperplane $\alpha_0=0$ for a generic $S\in\mathcal{H}_{u,v}^r$. Indeed, if this doesn't hold, then the limit case $\gamma_1=...=\gamma_r$ should also be inside the hyperplane $\alpha_0=0$. However, the intersection in that case is a multiple of a nondegenerate variety, a contradiction. As a result, for a generic $S\in\mathcal{H}_{u,v}^r$ we can find $\alpha$ from $\bigcap_{j=1}^r{{\bf A}_{\gamma_j}}^{-1}\left(M_{D-N+2}\right)$ which lies on $\{\alpha_0=1\}=H^0\left(T_{\lP^{D+1}}|_F\right)$, so that $\mathcal{S}+\alpha$ preserves the incidence condition between $\gamma_1$, ..., $\gamma_r$ and $Q$.
\end{proof}

Now we are ready to finish the proof of Theorem \ref{singCodim}.

Note that $\mathcal{H}_D^r=\bigcup_{u+v=D}\mathcal{H}_{u,v}^r$. By Lemma \ref{equiDim}
\[
	\bigcup_{u+v=D}\mathcal{H}_{u,v}^r = \bigcup_{u+v=D}\left(\mathcal{H}_{u,v}\cap\mathcal{H}_{\lfloor\frac{D}{2}\rfloor,\lceil\frac{D}{2}\rceil}^r\right) = \mathcal{H}_D\cap\mathcal{H}_{\lfloor\frac{D}{2}\rfloor,\lceil\frac{D}{2}\rceil}^r = \mathcal{H}_{\lfloor\frac{D}{2}\rfloor,\lceil\frac{D}{2}\rceil}^r.
\]
Therefore $\mathcal{H}_D^r=\mathcal{H}_{\lfloor\frac{D}{2}\rfloor,\lceil\frac{D}{2}\rceil}^r$, and the result follows from Lemma \ref{singuvCodim} with $(u,v) = (\lfloor\frac{D}{2}\rfloor,\lceil\frac{D}{2}\rceil)$.

\bigskip
\bibliography{CubicFourfold42_bib} 

\begin{thebibliography}{BPVdV84}

\bibitem[BBF04]{BBF04}
Edoardo Ballico, Cristiano Bocci, and Claudio Fontanari.
\newblock Zero-dimensional subschemes of ruled varieties.
\newblock {\em Cent. Eur. J. Math.}, 2(4):538--560, 2004.

\bibitem[BPVdV84]{BPV84}
W.~Barth, C.~Peters, and A.~Van~de Ven.
\newblock {\em Compact {C}omplex {S}urfaces}, volume~4 of {\em Ergebnisse der
  Mathematik und ihrer Grenzgebiete (3)}.
\newblock Springer-Verlag, Berlin, 1984.

\bibitem[BS83]{BS83}
S.~Bloch and V.~Srinivas.
\newblock Remarks on correspondences and algebraic cycles.
\newblock {\em Amer. J. Math.}, 105(5):1235--1253, 1983.

\bibitem[CJ96]{CM96}
Michael~L. Catalano-Johnson.
\newblock The possible dimensions of the higher secant varieties.
\newblock {\em Amer. J. Math.}, 118(2):355--361, 1996.

\bibitem[Cos06]{Cos06}
Izzet Coskun.
\newblock Degenerations of surface scrolls and the {G}romov-{W}itten invariants
  of {G}rassmannians.
\newblock {\em J. Algebraic Geom.}, 15(2):223--284, 2006.

\bibitem[DGPS15]{DGPS}
Wolfram Decker, Gert-Martin Greuel, Gerhard Pfister, and Hans Sch\"onemann.
\newblock {\sc Singular} {4-0-2} --- {A} computer algebra system for polynomial
  computations.
\newblock \url{http://www.singular.uni-kl.de}, 2015.

\bibitem[Ful98]{Ful98}
William Fulton.
\newblock {\em Intersection {T}heory}, volume~2 of {\em Ergebnisse der
  Mathematik und ihrer Grenzgebiete. 3. Folge. A Series of Modern Surveys in
  Mathematics}.
\newblock Springer-Verlag, Berlin, second edition, 1998.

\bibitem[GH94]{GH94}
Phillip Griffiths and Joseph Harris.
\newblock {\em Principles of algebraic geometry}.
\newblock Wiley Classics Library. John Wiley \& Sons, Inc., New York, 1994.
\newblock Reprint of the 1978 original.

\bibitem[Har77]{Har77}
Robin Hartshorne.
\newblock {\em Algebraic {G}eometry}.
\newblock Springer-Verlag, New York-Heidelberg, 1977.
\newblock Graduate Texts in Mathematics, No. 52.

\bibitem[Har95]{Har95}
Joe Harris.
\newblock {\em Algebraic {G}eometry}, volume 133 of {\em Graduate Texts in
  Mathematics}.
\newblock Springer-Verlag, New York, 1995.
\newblock A first course, Corrected reprint of the 1992 original.

\bibitem[Har10]{Har10}
Robin Hartshorne.
\newblock {\em Deformation {T}heory}, volume 257 of {\em Graduate Texts in
  Mathematics}.
\newblock Springer, New York, 2010.

\bibitem[Has00]{Has00}
Brendan Hassett.
\newblock Special cubic fourfolds.
\newblock {\em Compositio Math.}, 120(1):1--23, 2000.

\bibitem[Has16]{Has16}
Brendan Hassett.
\newblock Cubic fourfolds, {K}3 surfaces, and rationality questions.
\newblock In R.~Pardini and G.P. Pirola, editors, {\em Rationality Problems in
  Algebraic Geometry}, \rm CIME Foundation Subseries, Lecture Notes in
  Mathematics 2172, pages 29--66. Springer, 2016.

\bibitem[HT01]{HT01}
B.~Hassett and Y.~Tschinkel.
\newblock Rational curves on holomorphic symplectic fourfolds.
\newblock {\em Geom. Funct. Anal.}, 11(6):1201--1228, 2001.

\bibitem[Laf02]{Laf02}
Antonio Laface.
\newblock On linear systems of curves on rational scrolls.
\newblock {\em Geom. Dedicata}, 90:127--144, 2002.

\bibitem[Nue16]{Nue15}
Howard Nuer.
\newblock Unirationality of moduli spaces of special cubic fourfolds and {K}3
  surfaces.
\newblock In {\em Rationality problems in algebraic geometry}, volume 2172 of
  {\em Lecture Notes in Math.}, pages 161--167. Springer, Cham, 2016.

\bibitem[Sei92]{Sei92}
Wolfgang~K. Seiler.
\newblock Deformations of ruled surfaces.
\newblock {\em J. Reine Angew. Math.}, 426:203--219, 1992.

\bibitem[Voi13]{Voi13}
Claire Voisin.
\newblock Abel-{J}acobi map, integral {H}odge classes and decomposition of the
  diagonal.
\newblock {\em J. Algebraic Geom.}, 22(1):141--174, 2013.

\bibitem[Voi14]{Voi14}
Claire Voisin.
\newblock {\em Chow rings, decomposition of the diagonal, and the topology of
  families}, volume 187 of {\em Annals of Mathematics Studies}.
\newblock Princeton University Press, Princeton, NJ, 2014.

\bibitem[Voi17]{Voi15}
Claire Voisin.
\newblock On the universal {CH{$_0$}} group of cubic hypersurfaces.
\newblock {\em J. Eur. Math. Soc. (JEMS)}, 19(6):1619--1653, 2017.

\end{thebibliography}
\bibliographystyle{alpha}
\end{document}